\documentclass{amsart}

\usepackage{amsmath}
\usepackage{amsthm} 
\usepackage{graphicx}
\usepackage{color}
\usepackage{scalerel}
\usepackage[dvipsnames]{xcolor}
\usepackage{stmaryrd}
\usepackage{relsize}
\usepackage{comment}

\usepackage{enumerate}

\usepackage{amsfonts}
\usepackage{amssymb}
\usepackage[hidelinks]{hyperref}
\usepackage[capitalize]{cleveref}

\usepackage{nicefrac}
\usepackage{xfrac}

\usepackage{tipa}
\usepackage[normalem]{ulem}

\usepackage{mathtools}

\DeclareMathOperator*{\forkindep}{\raise0.2ex\hbox{\ooalign{\hidewidth$\vert$\hidewidth\cr\raise-0.9ex\hbox{$\smile$}}}}

\newcommand{\tp}{\operatorname{tp}}

\newcommand{\Av}{\mathrm{Av}}
\newcommand{\Th}{\mathrm{Th}}

\newcommand{\Lc}{\mathcal{L}}

\newcommand{\weak}{adequate}
\newcommand{\strong}{excellent}

\newcommand{\io}{\mathrm{i.o.}}

\newtheorem*{claim-star}{Claim}
\newtheorem*{theorem-non}{Theorem}
\newtheorem{theorem}{Theorem}[section] 
\newtheorem{lemma}[theorem]{Lemma}

\newtheorem{prop-def}[theorem]{Proposition-Definition}
\newtheorem{corollary}[theorem]{Corollary}
\newtheorem{fact}[theorem]{Fact}
\newtheorem{fact-eh}[theorem]{Fact(?)}

\newtheorem{question}[theorem]{Question}
\newtheorem{proposition}[theorem]{Proposition}
\newtheorem{proposition-eh}[theorem]{Proposition(?)}
\newtheorem*{theorem-star}{Theorem}
\newtheorem*{conjecture-star}{Conjecture}
\newtheorem*{lemma-star}{Lemma}

\theoremstyle{definition}
\newtheorem{definition}[theorem]{Definition}
\newtheorem{example}[theorem]{Example}

\newtheorem{remark}[theorem]{Remark}
\theoremstyle{remark}

\newtheorem*{warning}{Warning}

\newcommand{\inv}{\mathrm{inv}}
\newcommand{\supp}{\mathrm{supp}}

\newcommand{\st}{\mathrm{st}}
\newcommand{\diam}{\mathrm{diag}}
\newcommand{\qf}{\mathrm{qf}}
\newcommand{\str}{\mathrm{Str}}
\newcommand{\fin}{\mathrm{fin}}

\DeclareMathOperator{\Sym}{Sym}


\allowdisplaybreaks 

\title{Model theoretic events }
\author[K. Gannon]{Kyle Gannon$^{\dagger}$}
\thanks{$^{\dagger}$ Supported by the Fundamental Research Funds for the Central Universities, Peking University, grant no. 7100604835 and by the National Natural Science Fund of China, grant no. 12501001.}
\address{$^{\dagger}$ Beijing International Center for Mathematical Research (BICMR) \\ Peking University \\ Beijing, China.}
\email{kgannon@bicmr.pku.edu.cn} 
\author[J. Hanson]{James E. Hanson}
\address{$^{\ast}$  
  Iowa State University \\
  Ames, IA, USA }
\email{jameseh@iastate.edu} 

\begin{document}

\begin{abstract}


We develop a notion of sampling, called \emph{generic sampling}, for the context of global Keisler measures where the standard product is replaced by the Morley product. Choosing a point randomly in this space with respect to our distribution yields a \emph{random generic type} in infinitely many variables. We investigate several natural model-theoretic events and provide conditions under which they occur for almost all random generic types.

\end{abstract}

\maketitle

\section{Introduction}

Over the last 15 years, the study of Keisler measures has become an increasingly active and applicable topic in model theory. In the mid-1980s, Keisler originally studied measures in the NIP context as a way to generalize the theory of forking \cite{keisler1987measures}. Around 20 years later, these objects were revisited and recontextualized in a series of papers with authors among Hrushovski, Pillay, Peterzil, and Simon \cite{NIP1,NIP2,NIP3}. In the current ethos, Keisler measures are a key tool for studying definable groups (see e.g., \cite{chernikov2022definable,CS,conant2020group}), for applying model theory to combinatorics (e.g., regularity lemmas; see \cite{NIP5,NIP4,malliaris2016stable}), and for pure model theory (see e.g., \cite{ben2009randomizations,chernikov2023invariant,CGH,hrushovski2020first}).

We introduce the concept of \emph{generic sampling}, which builds on ordinary independent sampling relative to a probability distribution: given a probability space $(X,\mathbf{B},\mu)$, an \emph{$n$-sample} is an $n$-tuple from $X$ drawn according to the product measure $\mu^{n}$, and an \emph{infinite sample} is an element of the infinite product space $X^{\mathbb{N}}$ chosen with respect to $\mu^{\mathbb{N}}$ — or, equivalently, as a sequence obtained by independently drawing points from $X$ with distribution $\mu$. In our setting, $X = S_{x}(\mathcal{U})$ is the space of global types in a single variable, endowed with its Borel $\sigma$-algebra. However, the sample space $S_{x}(\mathcal{U})^{\mathbb{N}}$ does not contain enough \emph{model theoretic data}. To remedy this, we combine our distribution $\mu$ with the Morley product to construct a probability measure $\mathbb{P}_{\mu}$ on the space $S_{\mathbf{x}}(\mathcal{U})$ of types in countably many variables $\mathbf{x} = (x_i)_{i \geq 1}$. This probability space is rich enough to measure many important model theoretic events. We remark that these kinds of measures have been studied before \cite{NIP3,conant2025generic}, but not explicitly through the lens of probability theory.

Furthermore, this perspective yields a concrete connection between Keisler measures and measures which naturally arise in areas of combinatorics and probability theory. More explicitly, we demonstrate how to use Keisler measures and the Morley product to construct measures on the space of labeled $\mathcal{L}$-structures (Remark \ref{remark:connection}). Such measures are central to the study of graph limits (graphons, hypergraphons), random structures, and the theory of exchangeable random arrays (Aldous–Hoover). These connections are begin pursued in ongoing projects.

A \emph{model theoretic event} is a $\mathbb{P}_{\mu}$-measurable subset of $S_{\mathbf{x}}(\mathcal{U})$. We use the term \emph{random generic type} to refer to an element of the support of $\mathbb{P}_{\mu}$. In this article, we are concerned with two distinct kinds of events: 
\begin{enumerate}
    \item For a fixed $\mathcal{L}$-structure $N$, what is the probability that the induced structure on a random generic type is isomorphic to $N$?
    \item What is the probability that a random generic type witnesses a fixed dividing line (e.g., instability, IP, etc.)?
\end{enumerate}


Towards resolving the first question, we identify a measure-theoretic version of \emph{one-point extension property} which we call the \emph{excellent measure extension axiom}. We prove that in a finite relational language, if $\mu$ is 
a global measure which satisfies the excellent measure extension axiom, then 
\begin{enumerate}
    \item There exists a countable structure $N$ such that the induced structure on $\mathbb{P}_{\mu}$-almost all random generic types is isomorphic to $N$. 
    \item $\Th_{\mathcal{L}}(N)$ is countably categorical. 
    \item The associated measure on the space of labeled $\mathcal{L}$-structures is \emph{exchangeable}, i.e., invariant under the natural action of $\Sym(\mathbb{N})$.
\end{enumerate}


The second type of event of interest are \emph{witnesses to dividing lines}. While there are several plausible interpretations of this notion, we focus on a strong variant among the possible definitions. For example, given a formula $\varphi(x,y)$, we say that a type $p \in S_{\mathbf{x}}(\mathcal{U})$ \emph{strongly witnesses the instability of $\varphi$} if for any/all realizations $(a_i)_{i < \omega} \models p$, there exists $(b_j)_{j < \omega}$ such that 
\begin{equation*}
    \models \varphi(a_i,b_j) \Longleftrightarrow i \leq j. 
\end{equation*}
Notice that we can define \emph{strongly witnessing $k$-instability of $\varphi$} as follows:
\begin{equation*}
    O_{k}^{\varphi}(x_1,\ldots,x_k) \coloneqq \exists y_1,\ldots,y_k\left( \bigwedge_{1 \leq i \leq j \leq k}  \varphi(x_i,y_j) \wedge \bigwedge_{1 \leq j < i \leq k} \neg \varphi(x_i,y_j) \right).
\end{equation*}
Then a type $p \in S_{\mathbf{x}}(\mathcal{U})$ strongly witnesses the instability of $\varphi$ if and only if $p \in \mathbf{O}^{\varphi}$ where $\mathbf{O}^{\varphi} \coloneqq \bigcap_{k = 1}^{\infty} [O_{k}^{\varphi}(\bar{x})]$. The probability of a random generic type strongly witnessing instability is the measure of $\mathbf{O} \coloneqq \bigcup_{\varphi \in \mathcal{L}} \mathbf{O}^{\varphi}$. 
Among other things, we observe that $\mathbb{P}_{\mu}(\mathbf{O}) \in \{0,1\}$ and prove that if $\mu$ is \emph{generically stable} (see Definition \ref{cheat}(7)), then $\mathbb{P}_{\mu}(\mathbf{O}) = 0$ (Theorem \ref{theorem:witness}). 


Moreover, we prove that if $\varphi(x,y)$ is NIP, then the average value across all permutations of the measure of $O_{n}^{\varphi}(x_{\sigma(1)},\ldots,x_{\sigma(n)})$ must tend toward 0. More explicitly, we prove that if $\mu \in \mathfrak{M}_{x}^{\inv}(\mathcal{U},M)$ is $M$-adequate (i.e. Borel-definable and self-associative) and $\varphi(x,y)$ is NIP, then 
\begin{equation*}
    \lim_{n \to \infty} \frac{1}{n!} \sum_{\sigma \in \Sym(n)} \mathbb{P}_{\mu}(O_{n}^{\varphi}(x_{\sigma(1)},\ldots,x_{\sigma(n)})) = 0. 
\end{equation*}
We remark that when $\varphi(x,y)$ is not NIP, then it is possible for the limit to converge to $1$. This occurs for the \emph{weighted-coin-flipping} measures on the Rado graph. 


The paper is outlined as follows: In Section 2, we discuss preliminaries. In Section 3, we introduce the notion of generic sampling. Section 3 also contains some important computational lemmas which will be relevant in later sections of the paper.
Section 4 focuses on the \emph{isomorphism problem}, i.e. given a Keisler measure $\mu$, when does there exist some $\mathcal{L}$-structure $N$ such that the induced structure on almost all random generic types is isomorphic to $N$? We restrict ourselves to the case where our language is relational. We first prove that for any fixed $\mathcal{L}$-structure $N$ and any \emph{adequate measure} $\mu$, the set ${p \in S_{\mathbf{x}}(\mathcal{U}): \text{induced structure on } p \text{ is isomorphic to } N}$ is $\mathbb{P}_{\mu}$-measurable (Proposition \ref{prop:borel}). Next, we describe some relatively easy contexts in which one can obtain a positive answer to the isomorphism problem. We then move on to the main portion of the section where we introduce the \emph{adequate} and \emph{excellent measure extension axioms}. We prove that if a measure $\mu$ witnesses the excellent measure extension axiom, then we obtain a positive answer to the isomorphism problem and moreover the theory of the model witnessing this property is $\aleph_0$-categorical (Theorem \ref{theorem:main}, Theorem \ref{theorem:cat}). Furthermore, the associated measure on the space of labeled $\mathcal{L}$-structures is $\Sym(\mathbb{N})$-invariant. Section 5 is focused on concrete examples around the isomorphism problem. We show directly that the \emph{model-theoretic} Lebesgue measure over DLO and the \emph{weighted-coin-flipping} measures on the Rado graph have positive solutions to the isomorphism problem (Example \ref{example:random}, Example \ref{example:DLO}). We also exhibit some examples of measures which admit a negative answer to the isomorphism problem (Example \ref{example:bad}, Example \ref{example:bad2}). In the final section, we focus our attention on another kind of event, those associated with witnessing dividing lines. We prove that if $\mu$ is \emph{generically stable}, then almost no random generic types witness instability, the independence property, or the strict order property (Theorem \ref{theorem:witness}). This follows in part from the observation that witnessing any of these dividing lines for a particular formula has probability $0$ or $1$ (Lemma \ref{lem:01}). We then consider some concrete examples. Finally, we focus on the NIP setting. We prove that while it is possible that almost all random generic types witness instability in this setting, the average over a certain family of permutations converges to 0 (Theorem \ref{theorem:average}).

\section*{Acknowledgements}
We would like to thank Nathanael Ackerman, Cameron Freer, and Rehana Patel for helpful discussions, comments, and suggestions, specifically for discussions involving Theorem \ref{theorem:AFP}.

\section{Preliminaries} 

Given real numbers $r,s$ and a real number $\epsilon > 0$, we write $r \approx_{\epsilon} s$ to mean $|r - s| < \epsilon$.

We assume some familiarity with model theory. A good reference for background is \cite{Guide}. We will always have a fixed language $\mathcal{L}$ and a fixed $\mathcal{L}$-theory $T$ in the background. The symbol $\mathcal{U}$ will denote a monster model of $T$ and $M$ will denote a small elementary submodel. The symbols $x,y,z\ldots$ will denote singleton variables, $\bar{x}, \bar{y},\bar{z}\ldots$ will denote tuples of variables, and $a,b,c\ldots$ denote parameters. If $A \subseteq \mathcal{U}$, then $\mathcal{L}_{x}(A)$ is the Boolean algebra of formulas with free variables among $x$ and parameters from $A$ (modulo logical equivalence). An $\mathcal{L}(A)$-formula is a formula with parameters only from $A$. An $\mathcal{L}$-formula is a formula over the empty set. If $\bar{a} = a_1,\ldots,a_n$ is a sequence of points from $\mathcal{U}$ and $A$ is a subset of $\mathcal{U}$, we let $\tp_{\qf}(\bar{a}/A)$ denote the quantifier-free type of $\bar{a}$ over $A$. If $\theta(x_1,\ldots,x_n)$ is a $\mathcal{L}(\mathcal{U})$-formula, we may sometimes write the same formula as $\theta(x_{n};x_1,\ldots,x_{n-1})$ to place special emphasis on a particular variable, in this case $x_n$.

If $A \subseteq \mathcal{U}$, we let $S_{x}(A)$ be the associated type space. The central type space of study will be $S_{\mathbf{x}}(\mathcal{U})$ where $\mathbf{x} = (x_i)^{\omega}_{i \geq 1}$. We recall that $S_{\mathbf{x}}(\mathcal{U})$ is a Stone space, i.e.\ compact, Hausdorff, and totally disconnected. For indices $i_1,\ldots,i_n$, we will often consider the set, 
\begin{equation*}
    [\varphi(x_{i_1},\ldots,x_{i_n})] \coloneqq \{p \in S_{\mathbf{x}}(\mathcal{U}) : \varphi(x_{i_1},\ldots,x_{i_n}) \in p\}. 
\end{equation*}
These sets form a clopen basis for $S_{\mathbf{x}}(\mathcal{U})$ and in particular, they are Borel. We will often be interested in the space of non-redundant types such that none of coordinates are realized. We denote this space as $S_{\mathbf{x}}^{+}(\mathcal{U})$. More explicitly, 
\begin{equation*}
    S_{\mathbf{x}}^{+}(\mathcal{U}) := \{p \in S_{\mathbf{x}}(\mathcal{U}) : (\forall i \neq j)\  (x_i \neq x_j) \in p \text{~and~} (\forall a \in \mathcal{U})\ (x_i \neq a) \in p\}. 
\end{equation*}

We let $\mathfrak{M}_{x}(A)$ be the collection of Keisler measures on $\mathcal{L}_{x}(A)$, i.e.\ finitely additive probability measures on $\mathcal{L}_{x}(A)$. We recall the fact that every Keisler measure on $\mathcal{L}_{x}(A)$ extends uniquely to a countably additive regular Borel probability measure on $S_{x}(A)$, even when $x$ is replaced by an infinite tuple of variables. More explicitly, the following statement is from Fremlin's measure theory treatise \cite[416Q Proposition (b)]{fremlin2000measure} restricted to the probability measure context. 

\begin{fact}\label{fact:stone} Let $\mathbb{B}$ be a Boolean algebra and $S(\mathbb{B})$ its Stone space. Then there is a one-to-one correspondence between finitely additive probability measures on $\mathbb{B}$ and regular Borel probability measures on $S(\mathbb{B})$ given by $\mu(A) = \tilde{\mu}([A])$ where for each $A \in \mathbb{B}$, $[A]$ is the corresponding clopen subset of $S(\mathbb{B})$. 
\end{fact}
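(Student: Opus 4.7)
The plan is to verify Fremlin's correspondence, which rests on Stone duality together with a compactness argument that upgrades finite additivity to countable additivity on the clopen algebra. I would handle both directions separately, treating the passage from a finitely additive measure on $\mathbb{B}$ to a regular Borel probability on $S(\mathbb{B})$ as the substantive direction.

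In the easy direction, suppose $\tilde{\mu}$ is a regular Borel probability measure on $S(\mathbb{B})$ and define $\mu \colon \mathbb{B} \to [0,1]$ by $\mu(A) := \tilde{\mu}([A])$. Stone duality says $A \mapsto [A]$ is a Boolean-algebra isomorphism from $\mathbb{B}$ onto the algebra $\operatorname{Clop}(S(\mathbb{B}))$ of clopen subsets, so $\mu$ is finitely additive, and it is a probability because $[1_{\mathbb{B}}] = S(\mathbb{B})$. For the substantive direction, start with a finitely additive probability $\mu$ on $\mathbb{B}$ and define $\nu$ on $\operatorname{Clop}(S(\mathbb{B}))$ by $\nu([A]) := \mu(A)$. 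The central observation is that $\nu$ is automatically \emph{countably} additive on the clopen algebra: if $[A] = \bigsqcup_{n < \omega} [A_n]$ is a countable disjoint union of clopens whose union is again clopen, then $\{[A_n]\}_{n < \omega}$ is an open cover of the compact set $[A]$ by pairwise disjoint clopens, so only finitely many $[A_n]$ are nonempty, and countable additivity reduces to the finite additivity already in hand. I would then invoke the Hahn--Kolmogorov extension theorem to obtain a unique countably additive extension of $\nu$ to the $\sigma$-algebra $\sigma(\operatorname{Clop}(S(\mathbb{B})))$.

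The main obstacle is that when $\mathbb{B}$ is large, the Stone space is not metrizable, and $\sigma(\operatorname{Clop}(S(\mathbb{B})))$ is only the Baire $\sigma$-algebra, which can be strictly smaller than the full Borel $\sigma$-algebra. To bridge this gap I would use a standard regularization: either construct $\tilde{\mu}$ on Borel sets as the outer/inner measure generated by $\nu$ on clopens, or, cleaner, identify the positive linear functional $f \mapsto \int f \, d\nu$ on the dense subalgebra of clopen step functions in $C(S(\mathbb{B}))$, extend it by density to a bounded positive functional of norm $1$, and apply the Riesz--Markov representation theorem to produce a regular Borel probability measure $\tilde{\mu}$ with $\tilde{\mu}([A]) = \nu([A]) = \mu(A)$ for every $A \in \mathbb{B}$.

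Finally, I would check that the two constructions invert one another; this is immediate since both sides are determined by their values on the clopen basis and agree there, while regularity of $\tilde{\mu}$ together with the fact that clopens form a basis for the topology forces uniqueness of the extension. The bulk of the work is thus concentrated in the Baire-to-Borel step, and everything else is a mechanical application of Stone duality, compactness of $S(\mathbb{B})$, and Carathéodory extension.
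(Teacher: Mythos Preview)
Your argument is correct and complete as an outline. The paper itself does not prove this fact; it merely records it as a citation to Fremlin's treatise (416Q(b)) and moves on, so there is no proof in the paper to compare against. Your route via the compactness observation (a countable disjoint clopen partition of a clopen set is finite) followed by Riesz--Markov on the Stone--Weierstrass-dense algebra of clopen step functions is exactly the standard way to establish the correspondence, and you have correctly flagged the one nontrivial point, namely that $\sigma(\operatorname{Clop})$ may fall short of the Borel $\sigma$-algebra when $S(\mathbb{B})$ is not second countable.
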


We often make use of this correspondence without comment. The following is a \emph{Keisler measure cheat sheet} which is included for the reader's convenience. We remark that most (if not all) of the following definitions are originally from \cite{NIP1,NIP2,NIP3}. 

\begin{definition}\label{cheat} Let $\mu \in \mathfrak{M}_{\bar{x}}(\mathcal{U})$ and $M$ be a small elementary submodel. 
\begin{enumerate}
    \item We let $\mathfrak{M}_{\bar{x}}^{\inv}(\mathcal{U},M)$ be the collection of measures in $\mathfrak{M}_{\bar{x}}(\mathcal{U})$ which are $M$-invariant. We recall that $\mu$ is $M$-invariant if for any $\bar{a}, \bar{b} \in \mathcal{U}^{\bar{y}}$ such that $\bar{a} \equiv_{M} \bar{b}$, we have that $\mu(\varphi(\bar{x},\bar{a})) = \mu(\varphi(\bar{x},\bar{b}))$ for any $\Lc$-formula $\varphi(\bar{x},\bar{y})$. 
    \item Suppose $\mu \in \mathfrak{M}^{\inv}_{\bar{x}}(\mathcal{U},M)$. Then for any $\Lc$-formula $\varphi(\bar{x},\bar{y})$, we can define the map $F_{\mu,M}^{\varphi}: S_{\bar{y}}(M) \to [0,1]$ via $F_{\mu,M}^{\varphi}(q) = \mu(\varphi(\bar{x},\bar{b}))$ where $\bar{b} \models q$. We remark that this map is well-defined since $\mu$ is $M$-invariant. 
    \item Suppose $\mu \in \mathfrak{M}^{\inv}_{\bar{x}}(\mathcal{U},M)$. We say that $\mu$ is Borel-definable if for every $\mathcal{L}$-formula $\varphi(\bar{x},\bar{y})$, the map $F_{\mu,M}^{\varphi}$ is a Borel function. 
    \item Suppose $\mu \in \mathfrak{M}^{\inv}_{\bar{x}}(\mathcal{U},M)$. We say that $\mu$ is definable if for every $\mathcal{L}$-formula $\varphi(\bar{x},\bar{y})$, the map $F_{\mu,M}^{\varphi}$ is a continuous function. 
    \item Suppose that $\mu \in \mathfrak{M}^{\inv}_{\bar{x}}(\mathcal{U},M)$ and $\nu \in \mathfrak{M}_{\bar{x}}(\mathcal{U})$. Moreover, suppose that $\mu$ is Borel-definable. We define the Morley product $\mu \otimes \nu$ as follows: For any formula $\varphi(\bar{x},\bar{y}) \in \mathcal{L}_{\bar{x} \bar{y}}(\mathcal{U})$, we have that 
    \begin{equation*}
        (\mu \otimes \nu)(\varphi(\bar{x},\bar{y})) = \int_{S_{\bar{y}}(M')} F_{\mu,M'}^{\varphi} d\nu|_{M'},
    \end{equation*}
    where $M'$ is any small model containing $M$ and all the parameters from $\varphi$. The measure $\nu|_{M'}$ is the regular Borel probability measure corresponding to the restriction of $\nu$ to $\mathcal{L}_{\bar{y}}(M')$. We remark that this product is well-defined. In practice, we often drop the $M'$ from the notation when there is no possibility of confusion, e.g.\ $F_{\mu}^{\varphi}$ instead of $F_{\mu,M'}^{\varphi}$ and $\nu$ instead of $\nu|_{M'}$.
    \item The Morley product is a \emph{separated amalgam}. If $\bar{x}$ and $\bar{y}$ are distinct tuples of variables, $\mu \in \mathfrak{M}_{\bar{x}}^{\inv}(\mathcal{U},M)$ is Borel-definable, and $\nu \in \mathfrak{M}_{\bar{y}}(\mathcal{U})$ then for any $\varphi(\bar{x}) \in \mathcal{L}_{\bar{x}}(\mathcal{U})$ and $\psi(\bar{y}) \in \mathcal{L}_{\bar{y}}(\mathcal{U})$, we have that 
    \begin{equation*}
        (\mu \otimes \nu)(\varphi(\bar{x}) \wedge \psi(\bar{y})) = \mu(\varphi(\bar{x})) \cdot \nu(\psi(\bar{y})). 
    \end{equation*}
    See \cite{NIP3}. 
    \item  Suppose $\mu \in \mathfrak{M}_{\bar{x}}^{\inv}(\mathcal{U},M)$. We say that $\mu$ is \emph{generically stable} or \emph{fim} (\emph{frequency interpretation measure}) over $M$  if $\mu$ is definable and for any $\mathcal{L}$-formula $\varphi(\bar{x},\bar{y})$ there exists a sequences of $\mathcal{L}(M)$-formula $(\theta_{n}(\bar{x}_1,\ldots,\bar{x}_n))_{n \geq 1}$ where $|\bar{x}_i| = |\bar{x}|$ and for $i \neq j$, $\bar{x}_i \cap \bar{x}_j = \emptyset$ such that
\begin{enumerate}
    \item for any $\epsilon > 0$ there exists some $n_{\epsilon}$ such that if $n > n_{\epsilon}$ and $\mathcal{U} \models \theta_{n}(\bar{a}_1,\ldots,\bar{a}_n)$ then 
    \begin{equation*}
        \sup_{\bar{b} \in \mathcal{U}^{\bar{y}}}|\mu(\varphi(\bar{x},\bar{b})) - \Av(\bar{a}_1,\ldots,\bar{a}_n)(\varphi(\bar{x},\bar{b}))| < \epsilon,
    \end{equation*}
    where $\Av(\bar{a}_1,\ldots,\bar{a}_n) = \frac{1}{n}\sum_{i=1}^{n}\delta_{\tp(\bar{a}_i/\mathcal{U})}$. 
    \item $\lim_{n \to \infty} \mu^{(n)}(\theta_{n}(\bar{x}_1,\ldots,\bar{x}_n)) = 1$.  
\end{enumerate}
\item Suppose that $\mu \in \mathfrak{M}_{\bar{x}}^{\inv}(\mathcal{U},M)$. We say that $\mu$ is smooth over $M$ if whenever $\nu|_{M} = \mu|_{M}$, we may conclude that $\nu = \mu$. In other words, $\mu$ is the unique extension of $\mu|_{M}$ to the monster model. 
\end{enumerate}
\end{definition}

The first implication in the following fact is \cite[Corollary 2.6]{NIP3}. The second implication is straightforward; it follows directly from the fact that uniform limits of continuous functions are continuous.

\begin{fact} Suppose that $\mu \in \mathfrak{M}_{\bar{x}}^{\inv}(\mathcal{U},M)$. Then
\begin{enumerate}
\item If $\mu$ is smooth over $M$, then $\mu$ is \emph{fim} over $M$.
\item If $\mu$ is \emph{fim} over $M$, then $\mu$ is definable over $M$.
\end{enumerate}
In other words, smoothness implies generic stability, which in turn implies definability.
\end{fact}

The following definitions of \emph{adequate} and \emph{excellent} are new. The motivation behind the definition is obvious; we want to define a class of measures which interacts nicely with the Morley product, but we do not want to be constrained to the NIP setting. The terms \emph{\weak} and \emph{\strong} will become relevant in later sections when we define the \emph{\weak\ measure extension axiom} and the \emph{\strong\ measure extension axiom}.   

\begin{definition} Let $\mu \in \mathfrak{M}^{\inv}_{\bar{x}}(\mathcal{U},M)$. We say that $\mu$ is $M$-\emph{\weak} if 
\begin{enumerate}
    \item $\mu$ is Borel-definable over $M$. 
    \item Any iteration of the Morley product of $\mu$ is Borel-definable over $M$, i.e.\ for any $n \geq 1$, the measure $\mu^{(n+1)}(\bar{x}_1,\ldots,\bar{x}_{n+1}) = \mu(\bar{x}_{n+1}) \otimes \mu^{(n)}(\bar{x}_1,\ldots,\bar{x}_n)$ is Borel-definable over $M$. 
    \item $\mu$ is self-associative, i.e.\ the measure $\bigotimes_{i=1}^{n} \mu(\bar{x}_i)$ does not depend on the placement of parentheses. 
\end{enumerate}
We say that $\mu$ is $M$-\strong\ if $\mu$ is $M$-\weak\ and $\mu$ is self-commuting, i.e.\  $\mu_{\bar{x}} \otimes \mu_{\bar{y}} = \mu_{\bar{y}} \otimes \mu_{\bar{x}}$. Obviously all $M$-\strong\ measures are $M$-\weak. 
\end{definition}

\noindent One should keep the following examples of measures in mind. 

\begin{example}\label{example:main} The following are examples of \strong\ measures. 
\begin{enumerate}
    \item Suppose that $T$ is NIP. Then $\mu \in \mathfrak{M}_{\bar{x}}^{\inv}(\mathcal{U},M)$ is $M$-\strong\ if and only if $\mu$ is generically stable over $M$ (self-commuting is equivalent to generic stability; see \cite{NIP3}. Self-associativity follows from, e.g., \cite{CG,GanCon2}). We will consider the following concrete example.  Let $M = (\mathbb{R},<)$ and let $L$ be the standard Lebesgue measure on $\mathbb{R}$ restricted to the interval $[0,1]$. We define the Keisler measure $\mu_{L}$ on $\mathcal{L}_{x}(\mathcal{U})$ as follows: For any $\varphi(x) \in \mathcal{L}_{x}(\mathcal{U})$, 
\begin{equation*}
    \mu_{L}(\varphi(x)) = L(\{ r \in \mathbb{R}: \mathcal{U} \models \varphi(r)\}). 
\end{equation*}
Since the structure is $o$-minimal, every definable set is a finite union of points and intervals. In particular, the sets on the RHS are Borel subsets of $\mathbb{R}$ and so $\mu_{L}$ is well-defined. One can prove that $\mu_{L}$ is in $\mathfrak{M}_{x}^{\inv}(\mathcal{U},M)$ and is generically stable, smooth even. In particular, $\mu_{L}$ is $M$-\strong. 
    \item Let $\mathcal{L} = \{R\}$ and $T$ be the theory of the Rado graph.  For $t \in (0,1)$, we let $\mu_t$ be the unique Keisler measure in $\mathfrak{M}_{x}(\mathcal{U})$ such that for any distinct sequence of tuples $a_1,\ldots,a_n,b_1,\ldots,b_m$, we have that
\begin{equation*}
    \mu_t \left(\bigwedge_{i=1}^{n} R(x,a_i) \wedge \bigwedge_{j = 1}^{m}  \neg R(x,b_j) \right) =  t^{n} \left( 1 - t \right)^{m}. 
\end{equation*}
For any small model $M$, we claim that $\mu_t$ is $M$-\strong\ ((self-associativity follows from the associativity of the Morley product for definable measures; see \cite{CG}. Self-commuting is checked below).). In particular, for any $t \in (0,1)$, $\mu_{t}$ is $M$-\strong. 
\item In general, if $\mu \in \mathfrak{M}_{\bar{x}}^{\inv}(\mathcal{U},M)$ and $\mu$ is \emph{fim} over $M$, then $\mu$ is $M$-excellent. Self-associativity follows from associativity of definable measures while a proof of commuting with all Borel-definable measures (and thus self-commuting) can be found in \cite{CGH}.  
\end{enumerate}
Additionally, if $T$ is NIP, then a measure $\mu \in \mathfrak{M}_{\bar{x}}(\mathcal{U})$ is $M$-\weak\ if and only if $\mu$ is $M$-invariant (see e.g., \cite{NIP2,NIP3,GanCon2}). In general,  all $M$-definable measures are $M$-adequate. 
\end{example}

\begin{fact} For any $t \in (0,1)$, the measure $\mu_{t}$ in Example \ref{example:main} self-commutes. 
\end{fact}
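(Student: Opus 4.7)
The plan is to verify $(\mu_t)_x \otimes (\mu_t)_y = (\mu_t)_y \otimes (\mu_t)_x$ on every formula $\varphi(x,y) \in \mathcal{L}_{xy}(\mathcal{U})$ via a direct computation made possible by quantifier elimination in the random graph.

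First I would decompose an arbitrary $\varphi(x, y)$ with parameters $\bar{c} = (c_1, \ldots, c_k)$ as a finite disjoint union of complete quantifier-free $2$-types over $\bar{c}$. A preliminary observation is that $\mu_t$ is atomless on singletons: for each $a \in \mathcal{U}$, the random-graph extension axiom produces, for every $n$, distinct parameters $b_1, \ldots, b_n \in \mathcal{U} \setminus \{a\}$ all $R$-adjacent to $a$, so $\{x = a\} \subseteq \bigcap_{i \leq n} [R(x,b_i)]$, forcing $\mu_t(x = a) \leq t^n \to 0$ by countable additivity (Fact \ref{fact:stone}). Consequently both Morley products assign measure zero to any qf type asserting $x = c_i$, $y = c_j$, or $x = y$; for the last one uses that $F^{x=y}_{\mu_t}$ is essentially the zero function since $b \mapsto \mu_t(x = b) \equiv 0$. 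It therefore suffices to check the equality on ``generic'' complete qf $2$-types $p_{S,T,r}(x,y)$ indexed by $S, T \subseteq \bar{c}$ and $r \in \{0,1\}$, where $p_{S,T,r}$ stipulates that $R(x, c_i) \Leftrightarrow c_i \in S$, $R(y, c_j) \Leftrightarrow c_j \in T$, and $R(x,y) \Leftrightarrow r = 1$, together with $x$ and $y$ distinct from one another and from $\bar{c}$.

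Next I would compute $((\mu_t)_x \otimes (\mu_t)_y)(p_{S,T,r})$ directly from the definition of the Morley product. For each realization $b$ of the ``$y$-part'' of $p_{S,T,r}$ lying outside $\bar{c}$, the slice $p_{S,T,r}(x, b)$ is a generic qf formula in $x$ with $|S| + r$ positive $R$-atoms and $(k-|S|) + (1-r)$ negative $R$-atoms, hence of $\mu_t$-mass $t^{|S|+r}(1-t)^{(k-|S|)+(1-r)}$. Integrating against $(\mu_t)_y$ then yields
\begin{equation*}
((\mu_t)_x \otimes (\mu_t)_y)(p_{S,T,r}) = t^{|T|}(1-t)^{k-|T|} \cdot t^{|S|+r}(1-t)^{(k-|S|)+(1-r)} = t^{|S|+|T|+r}(1-t)^{2k - |S| - |T| + (1-r)}.
\end{equation*}
An entirely symmetric calculation, integrating out $x$ first via $F^{p_{S,T,r}}_{(\mu_t)_y}(q_a) = \mathbf{1}_{a \models p_S} \cdot t^{|T|+r}(1-t)^{(k-|T|)+(1-r)}$ and then against $(\mu_t)_x$, produces the identical expression for $((\mu_t)_y \otimes (\mu_t)_x)(p_{S,T,r})$. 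Since every $\varphi(x,y) \in \mathcal{L}_{xy}(\mathcal{U})$ is, modulo a null set for both products, a finite disjoint union of such generic types, the two Morley products coincide.

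The only delicate point is the atomless-on-singletons claim, which is what allows us to discard types involving equalities; the rest is a transparent unwinding of definitions powered by the fact that the atoms $R(x, c)$ carry precisely the product probabilities defining $\mu_t$, a property manifestly symmetric in the roles of $x$ and $y$.
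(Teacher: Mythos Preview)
Your proposal is correct and follows essentially the same route as the paper: reduce via quantifier elimination to conjunctions of literals (the paper frames this as a $\pi$-system argument, you as a disjoint decomposition into complete qf $2$-types), then compute both Morley products directly using the product form of $\mu_t$ on $R$-atoms. The only cosmetic difference is that you first prove $\mu_t$ is atomless and discard the equality cases up front, whereas the paper absorbs those cases into a uniform computation by allowing $P(x,y)\in\{R,\neg R,=,\neq\}$ and letting $p=\mu_t(P(x,y))$ take the value $0$ when $P$ is equality.
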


\begin{proof} By quantifier elimination and standard facts about measures, it suffices to show that $\mu_{t,x} \otimes \mu_{t,y}$ and $\mu_{t,y} \otimes \mu_{t,x}$ agree on intersections of literals. This follows from the fact that if two probability measures agree on a $\pi$-system, then they agree on the $\sigma$-algebra generated by that $\pi$-system. We claim moreover that it suffices to show these measures agree on formulas of the form

\begin{equation*}
    \Psi(x,y) \coloneqq \bigwedge_{t \in A} R^{\epsilon(t)}(x,a_t) \wedge \bigwedge_{s \in B} R^{\epsilon(s)}(y,b_s) \wedge \bigwedge_{w \in C} x \neq c_{w} \wedge \bigwedge_{z \in D} y \neq d_{z} \wedge P(x,y),
\end{equation*}
where $P(x,y) \in \{R(x,y),(\neg R(x,y) \wedge x\neq y),x=y,x\neq y\}$ and $\epsilon\colon A \sqcup B \to \{1,0\}$ with the convention that $R^{1}(-) = R(-)$ while $R^{0}(-) = \neg R$. Choose a small model $N$ containing $M$ and all the parameters in our formula $\Psi$. Let $\varphi_1(y) \coloneqq \bigwedge_{s \in B} R^{\epsilon(s)}(y,b_{s}) \wedge \bigwedge_{z \in D} y \neq d_{z}$, $\varphi_2(x) \coloneqq \bigwedge_{t \in A} R^{\epsilon(t)}(x,a_{t}) \wedge \bigwedge_{w \in C} x \neq c_{w}$, and $p = \mu_t(P(x,y))$. We remark that the possible values for $p$ are $t$, $1-t$, $0$, and $1$. Then
\begin{align*}
    (\mu_{t,x} \otimes \mu_{t,y})(\Psi(x,y)) &= \int_{S_{y}(N)} F_{\mu_{t,x}}^{\Psi} d\mu_{t,y} \\ &= \int_{S_{y}(N)} \mathbf{1}_{\varphi_1(y)} \cdot p \cdot \mu_{t,x}(\varphi_2(x)) d\mu_{t,y} \\ &= p \cdot \mu_{t,x}(\varphi_2(x)) \cdot \mu_{t,y}(\varphi_1(y)).
\end{align*}
An almost identical computation gives $(\mu_{t,y} \otimes \mu_{t,x})(\Psi(x,y)) = p \cdot \mu_{t,x}(\varphi_2(x)) \cdot \mu_{t,y}(\varphi_1(y))$. \qedhere
\end{proof}

\subsection{Supports} Here we recall some basic facts relating to the supports of measures. We recall that \emph{invariantly supported measures} were defined in \cite{chernikov2022definable}. 

\begin{definition} Let $A \subseteq \mathcal{U}$ and $\mu \in \mathfrak{M}_{\bar{x}}(A)$. We let $\supp(\mu)$ denote the support of $\mu$, i.e.
\begin{equation*}
    \supp(\mu) \coloneqq \{p \in S_{\bar{x}}(A): \mu(\varphi(\bar{x})) > 0, \text{for any } \varphi(\bar{x}) \in p\}. 
\end{equation*}
This definition makes sense when $\bar{x}$ is an infinite tuple of variables. Moreover we say that a measure $\mu$ is \emph{invariantly supported} (over $M$) if $\mu$ is $M$-invariant and for every $p \in \supp(\mu)$, we have that $p$ is $M$-invariant. 
\end{definition}

Recall the measures from Example \ref{example:main}. We remark that $\mu_{L}$ is invariantly supported while for any $t \in (0,1)$, $\mu_{t}$ is not invariantly supported (see  \cite{chernikov2022definable} for discussion). The next fact recalls that all invariant measures in NIP theories are invariantly supported. See \cite{Guide} for the definition of the Morley product of invariant types. Formally speaking, the Morley product for invariant types is defined differently than the Morley product for Keisler measures. This is because there are no Borel definability hypotheses which need to be fulfilled to define the Morley product of invariant types. However, if $p$ is Borel-definable and $q$ is any type, then $\delta_{p \otimes q} = \delta_{p} \otimes \delta_{q}$ (see e.g., \cite[Proposition 6.5]{gannon2020approximation}).

\begin{fact}\label{fact:support1} Suppose that $\mu \in \mathfrak{M}_{\bar{x}}(\mathcal{U})$ and let $M$ be a small submodel of $\mathcal{U}$. 
\begin{enumerate}
    \item $\supp(\mu)$ is a closed non-empty subset of $S_{\bar{x}}(\mathcal{U})$. This is still true when $\bar{x}$ is replaced by an infinite tuple of variables.
    \item (T NIP) If $\mu$ is $M$-invariant, then $\mu$ is invariantly supported \cite[Prop.~7.15]{Guide}. 
\end{enumerate}
\end{fact}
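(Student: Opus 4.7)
For \emph{(1)}, the cleanest route is to observe that the complement of $\supp(\mu)$ can be written as
\[
S_x(\mathcal{U}) \setminus \supp(\mu) = \bigcup \bigl\{ [\varphi(x)] : \varphi(x) \in \mathcal{L}_x(\mathcal{U}),\ \mu(\varphi(x)) = 0 \bigr\},
\]
a union of basic clopen sets, hence open; so $\supp(\mu)$ is closed. For non-emptiness, suppose toward a contradiction that $\supp(\mu) = \emptyset$. Then the clopens $[\varphi]$ with $\mu(\varphi)=0$ cover the compact Stone space $S_x(\mathcal{U})$, so a finite subcover $[\varphi_1],\dots,[\varphi_n]$ exists, giving
\[
1 = \mu(\varphi_1 \vee \cdots \vee \varphi_n) \le \sum_{i=1}^n \mu(\varphi_i) = 0,
\]
a contradiction. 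The same argument goes through verbatim for an infinite tuple of variables, since $S_{\mathbf{x}}(\mathcal{U})$ is again a compact Stone space with clopen basis $[\varphi(x_{i_1},\dots,x_{i_k})]$ and the measure extends uniquely by Fact \ref{fact:stone}.

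For \emph{(2)}, my plan is to proceed by contradiction. Assume $T$ is NIP, $\mu$ is $M$-invariant, and that some $p \in \supp(\mu)$ fails to be $M$-invariant; this gives a formula $\varphi(x,y)$ and tuples $a,b \in \mathcal{U}^y$ with $a \equiv_M b$, $\varphi(x,a) \in p$ and $\neg\varphi(x,b) \in p$. Since the conjunction $\varphi(x,a) \wedge \neg\varphi(x,b)$ lies in $p$, membership in $\supp(\mu)$ forces
\[
\mu\bigl(\varphi(x,a) \wedge \neg\varphi(x,b)\bigr) =: \varepsilon > 0.
\]
Next I would use $M$-invariance of $\mu$ together with saturation of $\mathcal{U}$ to extract, for each $n$, an $M$-indiscernible sequence of pairs $(a_i,b_i)_{i<n}$ with each $(a_i,b_i) \equiv_M (a,b)$, so that every set $\{\varphi(x,a_i) \wedge \neg\varphi(x,b_i)\}$ still has $\mu$-measure $\varepsilon$. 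Interleaving the sequence as $c_0 = a_0,\ c_1 = b_0,\ c_2 = a_1,\ c_3 = b_1,\dots$, the truth value of $\varphi(x,c_j)$ would have to alternate on a positive-measure set of $x$; producing arbitrarily long such alternations on an indiscernible sequence contradicts NIP.

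The main obstacle is the middle step: turning the single positive-measure witness $\mu(\varphi(x,a) \wedge \neg\varphi(x,b)) > 0$ into a \emph{uniform} lower bound along an indiscernible sequence of $M$-conjugate pairs. The natural tool here is Borel-definability of NIP invariant measures, which lets one integrate the defining function of $\varphi$ against $M$-invariant Morley products of types (or of $\mu$ itself) and read off that the positivity persists after $M$-conjugation. This is precisely the ingredient used in \cite[Prop.~7.15]{Guide}, and I would ultimately appeal to that reference rather than redo the extraction in detail.
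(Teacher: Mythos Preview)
The paper does not prove this statement at all: it is recorded as a \textbf{Fact}, with part (2) simply cited to \cite[Prop.~7.15]{Guide}. So there is no in-paper argument to compare against, and your decision to ultimately appeal to that reference for (2) is exactly what the paper does.

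Your argument for (1) is correct and is the standard one.

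For (2), your sketch has a genuine gap at precisely the point you flag. The interleaved sequence $c_0=a_0,\,c_1=b_0,\,c_2=a_1,\dots$ is \emph{not} indiscernible (only the sequence of pairs $(a_ib_i)_i$ is), so the NIP alternation bound does not apply to it directly. Moreover, from $\mu(\varphi(x,a)\wedge\neg\varphi(x,b))=\varepsilon$ and $M$-invariance you get $\mu(\varphi(x,a_i)\wedge\neg\varphi(x,b_i))=\varepsilon$ for each $i$ separately, but not a single positive-measure set on which the alternation pattern holds for \emph{all} $i$ simultaneously, which is what ``arbitrarily long alternation'' would require. The argument in \cite{Guide} avoids this by going through dividing rather than alternation: one first shows (no NIP needed) that an $M$-invariant measure assigns measure $0$ to any formula that divides over $M$, via the $k$-inconsistency bound $\sum_i \mathbf{1}_{\varphi(x,a_i)}\le k-1$; then, under NIP, a global type containing no formula that divides over $M$ is $M$-invariant. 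Combining these, every formula in $p\in\supp(\mu)$ has positive measure, hence does not divide over $M$, hence $p$ is $M$-invariant. If you want a self-contained proof rather than a citation, this is the route to write out.
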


\begin{definition} Let $\mu  \in \mathfrak{M}_{\bar{x}}^{\inv}(\mathcal{U},M)$ and $\nu \in \mathfrak{M}_{\bar{y}}(\mathcal{U})$. We define the notation:
\begin{equation*}
    \supp(\mu) \otimes \supp(\nu) := \{p \otimes q: p \in \supp(\mu), q \in \supp(\nu)\}. 
\end{equation*}
\end{definition}

\begin{proposition}\label{prop:support} Let $\mu \in \mathfrak{M}^{\inv}_{\bar{x}}(\mathcal{U},M)$ and $\nu \in \mathfrak{M}_{\bar{y}}(\mathcal{U})$. 
\begin{enumerate}
    \item If $\mu$ is Borel-definable, then for every $p \in \supp(\mu)$ and $q \in \supp(\nu)$, there exists some $r \in \supp(\mu \otimes \nu)$ such that $r|_{\bar{x}} = p$ and $r|_{\bar{y}} = q$. 
    \item If $\mu$ is Borel-definable and invariantly supported, then $(\supp(\mu) \otimes \supp(\nu)) \cap \supp(\mu \otimes \nu)$ is a dense subset of $\supp(\mu \otimes \nu)$. 
    \item If $\mu$ is definable and invariantly supported, then $\supp(\mu) \otimes \supp(\nu) \subseteq \supp(\mu \otimes \nu)$. 
\end{enumerate}
\end{proposition}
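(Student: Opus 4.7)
The plan treats the three parts in order of increasing subtlety.

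For (1), I would realize $r$ as a completion of the partial type
\begin{equation*}
\Sigma(x,y) = p(x) \cup q(y) \cup \{\neg \chi(x,y) : \chi \in \mathcal{L}_{xy}(\mathcal{U}),\ (\mu \otimes \nu)(\chi) = 0\}.
\end{equation*}
Any completion $r$ has $r|_x = p$, $r|_y = q$, and lies in $\supp(\mu \otimes \nu)$ since $r$ negates every null formula. Finite satisfiability reduces, after combining finitely many formulas from $p$ and $q$ into single $\varphi(x) \in p$ and $\chi(y) \in q$ and listing null formulas $\xi_1,\dots,\xi_n$, to showing that $(\mu \otimes \nu)(\varphi(x) \wedge \chi(y) \wedge \bigwedge_j \neg \xi_j) > 0$. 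The separated amalgam property of Definition \ref{cheat}(6) gives $(\mu \otimes \nu)(\varphi \wedge \chi) = \mu(\varphi)\nu(\chi) > 0$, and subtracting $(\mu \otimes \nu)(\bigvee_j \xi_j) = 0$ preserves positivity.

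For (3), given $p \in \supp(\mu)$ (which is $M$-invariant by hypothesis), $q \in \supp(\nu)$, and $\varphi(x,y) \in p \otimes q$, I would promote pointwise positivity to integral positivity via continuity. By the definition of the Morley product, $\varphi(x,b) \in p$ for $b \models q|_{M'}$, so $F_\mu^\varphi(q|_{M'}) = \mu(\varphi(x,b)) > 0$. Definability of $\mu$ makes $F_\mu^\varphi$ continuous on the compact Stone space $S_y(M')$, so it is bounded below by some $\epsilon > 0$ on a clopen neighborhood $[\chi(y)]$ of $q|_{M'}$. Since $q \in \supp(\nu)$ and $\chi \in q$, $\nu([\chi]) > 0$, hence $(\mu \otimes \nu)(\varphi) = \int F_\mu^\varphi\, d\nu \geq \epsilon \cdot \nu([\chi]) > 0$, giving $p \otimes q \in \supp(\mu \otimes \nu)$.

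For (2), Borel-definability only supplies measurability of $F_\mu^\varphi$, so the continuity argument of (3) breaks down and one has to settle for density. Given $\psi$ with $(\mu \otimes \nu)(\psi) > 0$, I would still find $q \in \supp(\nu)$ with $q|_{M'}$ in the Borel set $E_\psi = \{q' : F_\mu^\psi(q') > 0\}$, which has positive $\nu|_{M'}$-measure, so its preimage in $S_y(\mathcal{U})$ meets $\supp(\nu)$. Then $\mu(\psi(x,b)) > 0$ for $b \models q|_{M'}$, and the argument of (1) applied to $\mu$ alone yields $p \in \supp(\mu)$ with $\psi(x,b) \in p$; invariantly supported forces $p$ to be $M$-invariant, so $\psi \in p \otimes q$. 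The main obstacle, and the part requiring genuine care, is forcing $p \otimes q \in \supp(\mu \otimes \nu)$: for each $\varphi(x,y)$ with $(\mu \otimes \nu)(\varphi) = 0$ the set $E_\varphi = \{q' : F_\mu^\varphi(q') > 0\}$ is $\nu$-null, but the uncountable union $\bigcup_\varphi E_\varphi$ need not be $\nu$-null, so $q$ cannot be chosen to dodge them all at once. I would build $p$ and $q$ simultaneously by augmenting the partial type defining $p$ with $\{\neg \varphi(x,b) : (\mu \otimes \nu)(\varphi) = 0\}$ and reducing finite satisfiability of any such fragment to the vanishing of $F_\mu^{\varphi_j}(q|_{M'})$ for the finitely many null $\varphi_j$ in play; a generic $q \in \supp(\nu) \cap \pi^{-1}(E_\psi)$ lies outside each such $E_{\varphi_j}$ once $\varphi_j$ is fixed, and an extension/compactness argument weaves these finite constraints into a consistent $(p,q)$ with $p \otimes q \in [\psi] \cap \supp(\mu \otimes \nu)$.
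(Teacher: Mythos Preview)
Your arguments for (1) and (3) are correct and essentially match the paper's. For (1) the paper phrases the same idea dually---it intersects the closed sets $\supp(\mu\otimes\nu)\cap[\varphi(x)\wedge\psi(y)]$ over $\varphi\in p$, $\psi\in q$ using the finite intersection property, rather than completing a partial type---but the content is identical. For (3) the paper's argument is the same as yours except that it compresses your clopen-neighborhood step into the single line ``$F_{\mu,N}^\theta$ is continuous, therefore $\int F_{\mu,N}^\theta\,d\nu>0$,'' leaving implicit that $q|_N\in\supp(\nu|_N)$; your version is more explicit but not different in substance.

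For (2) there is a genuine divergence. The paper's proof does \emph{less} than you attempt: given $[\theta]$ meeting $\supp(\mu\otimes\nu)$, it finds $q\in\supp(\nu)$ with $F_\mu^\theta(q|_{M'})>0$, then $p\in\supp(\mu)$ with $\theta(x,b)\in p$, concludes $\theta\in p\otimes q$, and stops. It never verifies that $p\otimes q\in\supp(\mu\otimes\nu)$; in effect it proves that $\supp(\mu)\otimes\supp(\nu)$ meets every basic open set meeting $\supp(\mu\otimes\nu)$, not that the \emph{intersection} does. You correctly flag this as the real obstacle.

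However, your proposed repair is not a proof. The difficulty is structural: the condition ``$p\otimes q\in\supp(\mu\otimes\nu)$'' unpacks to ``for every null $\varphi(x,y;c)$ with $c\in\mathcal{U}$, $\neg\varphi(x,b';c)\in p$ where $b'\models q|_{N_c}$,'' so the constraints on $p$ involve realizations of $q$ over \emph{varying} parameter models, and the null formulas form an uncountable family. Your ``generic $q$'' and ``extension/compactness'' gestures do not explain how to thread these simultaneously---compactness handles finitely many null $\varphi_j$ at a time (and for those, $E_\psi\setminus\bigcup E_{\varphi_j}$ does have positive $\nu$-measure, so a suitable $q$ exists), but the choice of $q$ moves as the finite fragment grows, and the sets $E_{\varphi_j}$ are merely Borel, so there is no FIP on closed sets to invoke. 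Either make this precise or, more simply, match the paper and prove only the weaker density statement, which is what is actually used downstream.
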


\begin{proof} 
        For (1), notice that for any $\varphi(\bar{x}) \in p$ and $\psi(\bar{y}) \in q$, we have that
        \begin{equation*}
            (\mu \otimes \nu)(\varphi(\bar{x}) \wedge \psi(\bar{y})) = \mu(\varphi(\bar{x}))\nu(\psi(\bar{y})) > 0. 
        \end{equation*}
        By \cite[Proposition 2.7]{chernikov2022definable}, $\supp(\mu \otimes \nu) \cap [\varphi(\bar{x}) \wedge \psi(\bar{y})] \neq \emptyset$. If we let $A_{\varphi,\psi} = \supp(\mu \otimes \nu) \cap [\varphi(\bar{x}) \wedge \psi(\bar{y})]$, we notice that family $\mathcal{A} \coloneqq (A_{\varphi,\psi})_{\varphi \in p, \psi \in q}$ has the finite intersection property. Thus $\bigcap \mathcal{A}$ is non-empty. Take $r \in \bigcap \mathcal{A}$. 
        
        For (2), fix $\theta(\bar{x},\bar{y}) \in \mathcal{L}_{\bar{x}\bar{y}}(\mathcal{U})$. Suppose that $[\theta(\bar{x},\bar{y})] \cap \supp(\mu \otimes \nu) \neq \emptyset$. Then there exists some $r \in \supp(\mu\otimes \nu)$ such that $\theta(\bar{x},\bar{y}) \in r$. Since $r$ is in the support, $(\mu \otimes \nu)(\theta(\bar{x},\bar{y})) > 0$. Choose a model $M'$ such that $M'$ contains $M$ as well as all the parameters from $\theta$. Then $\int_{S_{\bar{y}}(M')} F_{\mu,M'}^{\theta}d\nu > 0$ and so there exists some $q \in \supp(\nu)$ such that $F_{\mu,M'}^{\theta}(q|_{M'}) > 0$. So $\mu(\theta(\bar{x},\bar{b})) > 0$ where $\bar{b}\models q|_{M'}$. By \cite[Proposition 2.7]{chernikov2022definable}, there exists some $p \in \supp(\mu)$ such that $\theta(\bar{x},\bar{b}) \in p$. Then we have that $\theta(\bar{x},\bar{y}) \in p \otimes q$.  
        
         For (3), note that for each $p \in \supp(\mu)$ and $q \in \supp(\nu)$, the Morley product $p \otimes q$ is well-defined. Now fix $p \in \supp(\mu)$, $q \in \supp(\nu)$, and $\theta(\bar{x},\bar{y}) \in p \otimes q$. Let $N$ be a small model such that $M \subseteq N$ and $N$ contains all the parameters from $\theta$. Then $\theta(\bar{x},\bar{b}) \in p$ where $\bar{b} \models q|_{N}$. Hence $\mu(\theta(\bar{x},\bar{b})) > 0$ since $p \in \supp(\mu)$. Then $F_{\mu,N}^{\theta}(p|_N)>0$ by definition. Since $\mu$ is definable over $M$, it is also definable over $N$, and so the map $F_{\mu,N}^{\theta}$ is continuous. Therefore $\int_{S_{\bar{y}}(N)} F_{\mu,N}^{\theta} d\nu > 0$. And by definition, $(\mu \otimes \nu)(\theta(\bar{x},\bar{y})) > 0$. \qedhere
\end{proof}

\subsection{Space of labeled $\mathcal{L}$-structures}

We will briefly discuss the relation between generic sampling and the space of labeled $\mathcal{L}$-structures. This on going research (e.g., \cite{ackerman2025generic}). Here, we require our labels to be strictly positive integers so that we do not have an off-by-one error when connecting this space with generic sampling.

\begin{definition} Fix a finite relational language $\mathcal{L}$. Then the space of labeled $\mathcal{L}$-structures $\str_{\mathcal{L}}$ is the space of all $\mathcal{L}$-structures with underlying domain $\mathbb{N}_{>0}$. For each $\mathcal{L}$-formula $\varphi(x_1,\ldots,x_{n})$ and natural numbers $i_1,\ldots,i_{n}$, we let 
\begin{equation*}
    \llbracket \varphi(i_1,\ldots,i_{n}) \rrbracket = \{M \in \str_{\mathcal{L}} : M \models \varphi(i_1,\ldots,i_{n}) \}.
\end{equation*}
We recall that $\str_{\mathcal{L}}$ is a compact Hausdorff space with a basis of open sets given by 
\begin{equation*}
    \{\llbracket \varphi(i_1,\ldots,i_{n}) \rrbracket : \varphi(x,\ldots,x_{n}) \text{ is a quantifier-free $\mathcal{L}$-formula, }i_1,\ldots,i_{n} \in \mathbb{N}_{>0}\}. 
\end{equation*}
\end{definition}

\begin{fact}\label{fact:invariant-measures-agree}
Let $\lambda$ and $\eta$ be Borel probability measures on $\operatorname{Str}_{\mathcal{L}}$. Then $\lambda = \eta$ if and only if for every quantifier-free $\mathcal{L}$-formula $\theta(x_1,\dots,x_n)$ and every tuple of distinct positive integers $i_1,\dots,i_n$, we have
\[
\lambda\bigl(\llbracket \theta(i_1,\dots,i_n)\rrbracket\bigr) = \eta\bigl(\llbracket \theta(i_1,\dots,i_n)\rrbracket\bigr).
\]
\end{fact}
\section{Generic sampling}

We begin by defining the probability space that is central to this paper. We remark that the space itself is not \emph{new} and has been implicitly studied in some situations (see e.g., \cite{NIP3,conant2025generic}). However, it is usually framed through the eyes of model theory, i.e.\ when one considers the iterated Morley  product, we typically tend to treat it as a \emph{Morley sequence in a measure}. Here, however, we want to treat it as a probability space \emph{in itself}. We recall that $\mathbf{x} = (x_i)_{i \geq 1}^{\omega}$. 

\begin{definition} Let $\mu \in \mathfrak{M}^{\inv}_{x}(\mathcal{U},M)$ and suppose that $\mu$ is Borel-definable. We define the measure $\mathbb{P}_{\mu}$ on $S_{\mathbf{x}}(\mathcal{U})$ as follows: 
\begin{enumerate}
    \item $\mu^{(1)}(x_1)= \mu(x_1)$. 
    \item $\mu^{(n+1)}(x_1,\ldots,x_{n+1}) = \mu(x_{n+1}) \otimes \mu^{(n)}(x_1,\ldots,x_n)$. 
    \item $\mathbb{P}_{\mu} = \bigcup_{i=1}^{\omega} \mu^{(n)}(x_1,\ldots,x_{n+1})$. 
\end{enumerate}  The probability space we care about is $(S_{\mathbf{x}}(\mathcal{U}), \mathcal{B} ,\widehat{\mathbb{P}_{\mu}})$ where $\mathcal{B}$ is the Borel $\sigma$-algebra of $S_{\mathbf{x}}(\mathcal{U})$ and $\widehat{\mathbb{P}_{\mu}}$ is the unique regular Borel probability measure on $S_{\mathbf{x}}(\mathcal{U})$ such that for every $\mathcal{L}_{\mathbf{x}}(\mathcal{U})$-formula $\varphi(x_1,\ldots,x_n)$, we have that 
\begin{equation*}
    \widehat{\mathbb{P}_\mu}([\varphi(x_1,\ldots,x_n)]) = \mathbb{P}_{\mu}(\varphi(x_1,\ldots,x_n)). 
\end{equation*}
\end{definition}

\begin{remark} Intuitively, one should think of the space $(S_{\mathbf{x}}(\mathcal{U}),\mathcal{B},\mathbb{P}_{\mu})$ as an object arising from a \emph{random process}. The following explanation is not technically correct, but as Pillay would say, it is \emph{morally correct}: At each stage, we randomly and independently choose a point in $\supp(\mu)$ and then concatenate the Morley product to get an element in $S_{\mathbf{x}}(\mathcal{U})$. Then for any $A \in \mathcal{B}$, $\mathbb{P}_{\mu}(A)$ can be thought of as the likelihood that a type constructed in this way is in $A$. This is precisely what is occurring when our measure $\mu$ is a sum of finitely many invariant types, i.e. $\mu = \sum_{i=1}^{n} r_i \delta_{p_i}$ where each $p_i \in S_{x}^{\inv}(\mathcal{U},M)$, $r_i \geq 0$ and $\sum_{i=1}^{n} r_i = 1$ (see (3) of Proposition \ref{prop:bigsupport}). The situation is more complicated in the general case.
\begin{enumerate}
    \item It is possible for $\supp(\mathbb{P}_{\mu})$ to contain elements which do not arise simply as an iterated Morley product, i.e. $q \in \supp(\mathbb{P}_{\mu})$ and $q \neq \bigotimes_{i=1}^{\omega} p_i(x_i)$ for $p_i \in \supp(\mu)$. Hence the gluing together of types in-between stages is more complex than just taking the Morley product. Moreover, we only know that $\{\bigotimes_{i=1}^{\omega} p_i(x_i) : p_i \in \supp(\mu)\} \subseteq \supp(\mathbb{P}_{\mu})$ when $\mu$ is definable and invariantly supported (see (2) of Proposition \ref{prop:bigsupport}). 
    \item It is also possible that almost none of the types in the support of $\mu$ are invariant and so the Morley product does not even make sense on elements of the support. This happens only outside of the NIP context, but it does occur with excellent measures in the Rado graph. Indeed, for $\mu_{t}$ in Example \ref{example:main}, $\supp(\mu_{t}) = S_{x}(\mathcal{U}) \backslash \{\tp(a/\mathcal{U}): a \in \mathcal{U}\}$. 
\end{enumerate}
Finally, we remark that Proposition \ref{prop:random} and the paragraph after it argue that the random process associated to $(S_{\mathbf{x}}(\mathcal{U}),\mathcal{B},\mathbb{P}_{\mu})$ extends the standard random process of sampling points from $S_{x}(\mathcal{U})$ with respect to $\mu$. 
\end{remark}

There are some quirks that the uninitiated reader should keep in mind. First, the Morley product is often not ``commutative'' and sometimes not ``associative'' with respect to formulas from $\mathcal{L}_{\mathbf{x}}(\mathcal{U})$ (see, i.e., \cite{CGH}). For instance, it is common that the following inequality holds:\footnote{The sets we are measuring on either side of the inequality are typically different.}
\begin{equation*}
    \mathbb{P}_{\mu}\left([\varphi(x_1,x_2)]\right) \neq \mathbb{P}_{\mu}\left([\varphi(x_2,x_1)]\right).
\end{equation*}
Hence, when our measure $\mu$ is only Borel-definable or $M$-\weak, one needs to pay attention to the order of the variables. 

Secondly, the space $S_{x}(\mathcal{U})$ is big and by extension, so is $S_{\mathbf{x}}(\mathcal{U})$. In particular, while $S_{\mathbf{x}}(\mathcal{U})$ is a compact Hausdorff space, it is not Polish. However, this actually seems to cause less of a problem than one might imagine. Many of the events we care about as well as those that model theorists should care about are still Borel relative to $S_{\mathbf{x}}(\mathcal{U})$ and so they are $\mathbb{P}_{\mu}$-measurable (for example, see Proposition \ref{prop:borel}).

Finally, we should compare and contrast the spaces $(S_{\mathbf{x}}(\mathcal{U}),\mathcal{B},\mathbb{P}_{\mu})$ with the \emph{standard sampling space}. More explicitly,  the \emph{product measure on the cartesian product of our space} $(\prod_{i\geq 1}^{\omega} S_{x_i}(\mathcal{U}), \mathcal{D}, \prod_{i\geq 1}^\omega \mu(x_i))$ where $\mathcal{D}$ is Borel $\sigma$-algebra on the product. 
As a convention, we will drop the indices off the product space. 
Let
$\prod \mu$ be the unique product measure on $(\prod S_x(\mathcal{U}), \mathcal{D})$. 

We remark that the probability space we are studying is an extension of the standard sampling space. The following fact brings this claim into focus.  

\begin{proposition}\label{prop:random} Let $\mu \in \mathfrak{M}_{x}^{\inv}(\mathcal{U},M)$ such that $\mu$ is Borel-definable. Let $r:S_{\mathbf{x}}(\mathcal{U}) \to \prod S_{x}(\mathcal{U})$ be the restriction map given by $r(p) = (p|_{x_i})_{i \geq 1}^{\omega}$. Then the following hold: 
\begin{enumerate}
    \item $r$ is a continuous surjection between compact Hausdorff spaces. Hence $r$ is a quotient map. 
    \item The pushforward of $\mathbb{P}_{\mu}$ is precisely $\prod \mu$, i.e.\ $r_{*}(\mathbb{P}_{\mu}) = \prod \mu$. 
    \item By the previous observation, if an event occurs for $\prod \mu$-almost all $(p_i)_{i \geq 1}^{\omega} \in \prod S_{x}(\mathcal{U})$, then the corresponding event occurs for $\mathbb{P}_{\mu}$-almost all $p \in S_{\mathbf{x}}(\mathcal{U})$. This is just a fancy way to say that if $B \in \mathcal{D}$ and $\prod \mu(B) = 1$ then $\mathbb{P}_{\mu}(r^{-1}(B)) = 1$. 
\end{enumerate}
\end{proposition}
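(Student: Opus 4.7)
The plan is to handle the three claims in order, with (2) being where the real content lies.

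For (1), I would first check continuity of $r$ via the universal property of the product topology: for each $i$ the composition $\pi_i \circ r$ sends $p \in S_{\mathbf{x}}(\mathcal{U})$ to $p|_{x_i}$, and the preimage of a basic clopen $[\varphi(x_i)] \subseteq S_{x_i}(\mathcal{U})$ under this composition is the clopen $[\varphi(x_i)] \subseteq S_{\mathbf{x}}(\mathcal{U})$. Since every $\pi_i \circ r$ is continuous, $r$ itself is. For surjectivity, given $(p_i)_{i \geq 1}^{\omega} \in \prod S_{x}(\mathcal{U})$, relabel so that the $p_i$'s live in disjoint variables $x_i$. Any finite subfamily of $\bigcup_i p_i(x_i)$ uses only finitely many distinct variables; since these variables are disjoint, realizations of the individual $p_i$'s can be combined into a simultaneous realization in $\mathcal{U}$, so the union is finitely satisfiable. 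By compactness it extends to some $p \in S_{\mathbf{x}}(\mathcal{U})$ with $r(p) = (p_i)_i$. Finally, any continuous surjection from a compact space onto a Hausdorff space is closed (images of closed sets are compact, hence closed), hence a quotient map.

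For (2), the plan is to invoke the $\pi$-$\lambda$ theorem after checking agreement on a generating $\pi$-system. The cylinder sets of the form
$$B = [\varphi_1(x_1)] \times \cdots \times [\varphi_n(x_n)] \times \prod_{i > n} S_{x_i}(\mathcal{U})$$
form a $\pi$-system generating $\mathcal{D}$, since the clopens $[\varphi_i(x_i)]$ form a basis for $S_{x_i}(\mathcal{U})$. For such a $B$, we have $r^{-1}(B) = [\bigwedge_{i=1}^n \varphi_i(x_i)]$, so the required agreement reduces to
$$\mathbb{P}_{\mu}\!\left(\bigwedge_{i=1}^n \varphi_i(x_i)\right) = \prod_{i=1}^n \mu(\varphi_i(x_i)).$$
I would prove this by induction on $n$, using the recursive definition $\mu^{(k+1)} = \mu(x_{k+1}) \otimes \mu^{(k)}$ and the separated amalgam property recorded in item (6) of Definition \ref{cheat}, which lets me peel off the factor $\mu(\varphi_{k+1}(x_{k+1}))$ at each stage. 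The Borel-definability hypothesis is exactly what legitimizes the iterated Morley products, and separated amalgam applies because $\varphi_{k+1}(x_{k+1})$ and $\bigwedge_{i=1}^k \varphi_i(x_i)$ use disjoint variable tuples. This is the only genuine computation in the proposition.

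For (3), the claim is immediate from (2): given $B \in \mathcal{D}$ with $\prod \mu(B) = 1$, the preimage $r^{-1}(B)$ is Borel in $S_{\mathbf{x}}(\mathcal{U})$ by continuity of $r$, and $\mathbb{P}_{\mu}(r^{-1}(B)) = r_{*}(\mathbb{P}_{\mu})(B) = \prod \mu(B) = 1$ by the defining property of pushforward together with (2).
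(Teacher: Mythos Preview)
Your arguments for (1) and (3) are fine and more explicit than the paper's (which simply records ``Clear'' for both). The computation you give at the end of (2) --- reducing to
\[
\mathbb{P}_{\mu}\!\left(\bigwedge_{i=1}^n \varphi_i(x_i)\right) = \prod_{i=1}^n \mu(\varphi_i(x_i))
\]
via induction and the separated amalgam property --- is exactly the right core identity, and matches the paper's first step.

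There is, however, a genuine gap in your invocation of the $\pi$--$\lambda$ theorem. You assert that cylinders with \emph{clopen} factors generate $\mathcal{D}$, justifying this by saying the clopens form a basis for each $S_{x_i}(\mathcal{U})$. But generating the topology is not the same as generating the Borel $\sigma$-algebra: since $\mathcal{U}$ is a monster model, $S_{x_i}(\mathcal{U})$ is typically not second countable, and an open set there can be an uncountable union of clopens that does not lie in the $\sigma$-algebra generated by clopens. (For instance, in the theory of pure equality, any uncountable co-uncountable set of realized types is open but not in $\sigma(\text{clopens})$.) Consequently your $\pi$-system generates only a proper sub-$\sigma$-algebra of $\mathcal{D}$, and $\pi$--$\lambda$ only yields agreement there.

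The paper repairs exactly this point: it takes \emph{open} cylinders as the $\pi$-system (these do generate $\mathcal{D}$, since open sets generate the Borel $\sigma$-algebra of each factor), first proves your clopen identity, and then passes from clopen to open factors by inner-approximating each open $A_i$ by clopens and invoking regularity of $\mathbb{P}_{\mu}$ and of $\mu$. An equivalent patch for your argument: observe that the finite-dimensional pushforwards $(\pi_1,\ldots,\pi_n)_*\bigl(r_*(\mathbb{P}_\mu)\bigr)$ and $\mu \times \cdots \times \mu$ are both regular Borel measures on the Stone space $\prod_{i=1}^n S_{x_i}(\mathcal{U})$ (as pushforwards of regular measures under continuous maps, resp.\ finite products of Radon measures), so by Fact~\ref{fact:stone} agreement on clopens forces agreement on all Borel cylinders; then apply $\pi$--$\lambda$ with Borel cylinders as the $\pi$-system.
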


\begin{proof} We prove the statements: 
\begin{enumerate}
    \item Clear. 
    \item Since the collection of basic open subsets of $\prod S_{x}(\mathcal{U})$ forms a $\pi$-system which generates $\mathcal{D}$, it suffices to show that the measures $r_{*}(\mathbb{P}_{\mu})$ and $\prod \mu$ agree on basic open subsets of $\prod S_{x}(\mathcal{U})$. First assume that $A_i = [\theta_i(x_i)]$ for $i \leq n$. Then 
    \begin{align*}
        r_{*}(\mathbb{P}_{\mu}) \left(\prod A_i \right) &= \mathbb{P}_{\mu}\left( \left\{p \in S_{\mathbf{x}}(\mathcal{U}) : \bigwedge_{i=1}^{n} \theta_i(x_i) \in p \right\} \right) \\ &= \mathbb{P}_{\mu}\left( \left[ \bigwedge_{i=1}^{n} \theta_i(x_i) \right] \right) \\ &=\mu^{(n)}\left( \bigwedge_{i=1}^{n} \theta_i(x_i) \right) \\ &= \prod_{i=1}^{n} \mu_{x_i}(\theta_i(x_i)) \\ &= \prod \mu \left(\prod A_i \right)
    \end{align*}
where the last equation follows from induction and the fact that the Morley product is a separated amalgam. Now assume that $A_i$ is open for $i \leq n$. Then $A_i = \bigcup_{j_i \in J_i} [\theta_{j_i}(x_i)]$. For each finite collection of indices $\bar{j}_i = \{j_{i_1},\ldots,j_{i_k}\}$, we let $\psi_{\bar{j}_i}(x_i) = [\theta_{j_{i_1}}(x_i) \vee \ldots \vee \theta_{j_{i_k}}(x_i)]$. Then we can write $\prod A_i$ as an upward directed family of open sets, namely 
\begin{equation*}
    \prod A_i = \bigcup_{(\bar{j}_1,\ldots,\bar{j}_n) \in \mathcal{P}_{\fin}(J_1) \times \ldots \times \mathcal{P}_{\fin}(J_n)} [\psi_{\bar{j}_{1}}(x_1) \wedge \ldots \wedge \psi_{\bar{j}_{n}}(x_n) ],
\end{equation*}
where $\mathcal{P}_{\fin}(J_{i})$ is the collection of all finite subsets of $J_{i}$ and the elements of the index set are ordered by coordinate-wise inclusion. Then, 
 \begin{align*}
        r_{*}(\mathbb{P}_{\mu}) \left(\prod A_i \right) &= \mathbb{P}_{\mu}\left( \bigcup_{(\bar{j}_1,\ldots,\bar{j}_n) \in \mathcal{P}_{\fin}(J_1) \times \ldots \times \mathcal{P}_{\fin}(J_n)} [\psi_{\bar{j}_1}(x_1) \wedge \ldots \wedge \psi_{\bar{j}_n}(x_n)] \right) \\ &\overset{(*)}{=} \sup_{(\bar{j}_1,\ldots,\bar{j}_n) \in \mathcal{P}_{\fin}(J_1) \times \ldots \times \mathcal{P}_{\fin}(J_n)} \mathbb{P}_{\mu}\left( \left[\bigwedge_{i=1}^{n} \psi_{\bar{j}_i}(x_i) \right] \right) \\ & \overset{(\dagger)}{=} \sup_{(\bar{j}_1,\ldots,\bar{j}_n) \in \mathcal{P}_{\fin}(J_1) \times \ldots \times \mathcal{P}_{\fin}(J_n)} \prod \mu_{x_i}(\psi_{\bar{j}_i}(x_i)) \\ &= \prod \sup_{\bar{j}_i \in \mathcal{P}_{\fin}(J_i)} \mu_{x_i}(\psi_{\bar{j}_i}(x_i)) \\ &\overset{(**)}{=} \prod \mu_{x_i}(A_i) \\ &= \prod\mu\left( \prod A_i \right). 
\end{align*}
Equations $(*)$ and $(**)$ follow from $\tau$-additivity of our measures $\mathbb{P}_{\mu}$ and $\mu$, respectively (both are regular Borel probability measures on compact Hausdorff spaces). Equation $(\dagger)$ follows from the previous case. 
\item Clear. \qedhere
\end{enumerate}
\end{proof}

Aided by the previous fact, we have the following alternative interpretation in terms of random processes. A random point in the space $\prod S_{x}(\mathcal{U})$ corresponds to infinitely many independent trials. In each trial, we choose a point randomly with respect to $\mu$. To find a random point in $S_{\mathbf{x}}(\mathcal{U})$, we do the same thing, but then must make decisions about more formulas, e.g.\ if we choose $p_{x_1}$ and then $p_{x_2}$, we need to make some choice about formulas of the form $\varphi(x_1,x_2)$. These choices are determined generically by the Morley product. 

We use the following jargon without excuse. 

\begin{definition} Given a Borel-definable measure $\mu \in \mathfrak{M}_{x}^{\inv}(\mathcal{U},M)$, and a subset $B \subseteq S_{\mathbf{x}}(\mathcal{U})$ we say that:
\begin{enumerate}
    \item $B$ holds on \emph{almost all} random generic types if $\mathbb{P}_{\mu}(B) = 1$. 
    \item $B$ holds on \emph{almost no} random generic types if $\mathbb{P}_{\mu}(B) = 0$. 
\end{enumerate}
\end{definition}

\subsection{The support} The support of the measure $\mathbb{P}_{\mu}$ is an interesting set in itself. The elements in $\supp(\mathbb{P}_{\mu})$ can be thought of as those types in \emph{which one could potentially sample}. Let's make a few observations in the invariantly supported context.

\begin{proposition}\label{prop:bigsupport} Let $\mu \in \mathfrak{M}_{x}^{\inv}(\mathcal{U},M)$ such that $\mu$ is Borel-definable and invariantly supported. Let $r:S_{\mathbf{x}}(\mathcal{U}) \to \prod S_{x}(\mathcal{U})$ via $r(p) = (p|_{x_i})_{i \geq 1}^{\omega}$. Then

\begin{enumerate}
    \item $\{r(p): p \in \supp(\mathbb{P}_{\mu})\} = \{(p_i)_{i \geq 1}^{\omega}: p_i \in \supp(\mu)\} $. 
    \item If $\mu$ is definable then
    \begin{equation*}
        \left\{\bigotimes_{i=1}^{\omega} p_i(x_i): p_i \in \supp(\mu) \right\} \subseteq_{\mathrm{dense}} \supp(\mathbb{P}_{\mu}).
    \end{equation*}
    \item If $\mu = \sum_{i=1}^{n} r_i\delta_{p_i}$ for some $p_{1},\ldots,p_{n} \in S_{x}^{\inv}(\mathcal{U},M)$ and positive real numbers $r_1,\ldots,r_n$ such that $\sum_{i=1}^{n} r_i = 1$, then 
    \begin{equation*}
        \supp(\mathbb{P}_{\mu}) = \left\{\bigotimes_{i=1}^{\omega} q_i(x_i) : q_i \in \{p_1,\ldots,p_n\} \right\}. 
    \end{equation*}
\end{enumerate}
\end{proposition}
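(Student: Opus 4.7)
For part (1), the containment $\subseteq$ is immediate from the separated amalgam property (Definition \ref{cheat}(6)): for $p \in \supp(\mathbb{P}_{\mu})$ and $\varphi(x_i) \in p$, one has $\mathbb{P}_{\mu}([\varphi(x_i)]) = \mu^{(n)}(\varphi(x_i)) = \mu(\varphi(x_i))$ for any $n \geq i$, which must therefore be positive. For $\supseteq$, given $(p_i)_{i \geq 1}^\omega$ with each $p_i \in \supp(\mu)$, I would iterate Proposition \ref{prop:support}(1) along the decomposition $\mu^{(n+1)} = \mu(x_{n+1}) \otimes \mu^{(n)}$ to build a coherent tower $r_n \in \supp(\mu^{(n)})$ satisfying $r_n|_{x_i} = p_i$ for $i \leq n$ and $r_{n+1}|_{x_1,\ldots,x_n} = r_n$. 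The union $p := \bigcup_n r_n$ is then a complete type in $S_{\mathbf{x}}(\mathcal{U})$ (every $\mathcal{L}_{\mathbf{x}}(\mathcal{U})$-formula mentions finitely many variables, hence lies in some $r_n$), and the same observation shows $\mathbb{P}_{\mu}([\varphi]) = \mu^{(n)}(\varphi) > 0$ for every $\varphi \in p$, so $p \in \supp(\mathbb{P}_{\mu})$ and restricts to the given $(p_i)$.

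For part (2), I would induct on $n$ that $\mathcal{D}_n := \{\bigotimes_{i=1}^n p_i : p_i \in \supp(\mu)\}$ is dense in $\supp(\mu^{(n)})$; Proposition \ref{prop:support}(3) (applied iteratively, using that $\mu$ is definable and invariantly supported) already gives $\mathcal{D}_n \subseteq \supp(\mu^{(n)})$. For the induction step, given a target $r \in \supp(\mu^{(n+1)})$ in a basic clopen $[\psi]$, the identity $\mu^{(n+1)}(\psi) = \int F_{\mu}^{\psi} \, d\mu^{(n)} > 0$ produces $q \in \supp(\mu^{(n)})$ with $F_{\mu}^{\psi}(q) > 0$. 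Here the definability hypothesis (stronger than mere Borel-definability) does crucial work: it promotes $F_{\mu}^{\psi}$ from Borel to continuous, so $\{F_{\mu}^{\psi} > 0\}$ is open and thus contains a clopen neighborhood of $q$. The inductive density of $\mathcal{D}_n$ then supplies $q' = \bigotimes_{i=1}^n p_i$ in that clopen; for $(b_1,\ldots,b_n) \models q'$, positivity of $\mu(\psi(x_{n+1},\bar b))$ yields $p_{n+1} \in \supp(\mu)$ with $\psi(x_{n+1},\bar b) \in p_{n+1}$, whence $\psi \in p_{n+1} \otimes q' = \bigotimes_{i=1}^{n+1} p_i$. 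Lifting to $\omega$-products is routine, since every basic clopen of $S_{\mathbf{x}}(\mathcal{U})$ mentions only finitely many variables, so one witnesses density in finite stages and pads with arbitrary elements of $\supp(\mu)$. This upgrading of a positive-measure Borel set to an open one via continuity is the main obstacle of the proof.

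For part (3), the hypothesis $\mu = \sum_{i=1}^n r_i \delta_{p_i}$ propagates cleanly through the Morley product: Borel-definability of $\mu$ together with linearity of integration over the finite sum gives $\mu^{(k)} = \sum_{(i_1,\ldots,i_k) \in [n]^k} r_{i_1} \cdots r_{i_k} \, \delta_{p_{i_k} \otimes \cdots \otimes p_{i_1}}$, so $\supp(\mu^{(k)})$ is exactly the set of finite Morley products of the $p_i$'s. The inclusion $\supseteq$ in (3) is then immediate, since any infinite Morley product of the $p_i$'s has each finite restriction carrying positive $\mu^{(k)}$-mass. For $\subseteq$, given $p \in \supp(\mathbb{P}_{\mu})$, let $T_k \subseteq [n]^k$ be the nonempty finite set of index sequences realizing $p|_{x_1,\ldots,x_k}$; the separated amalgam property makes truncation $T_{k+1} \to T_k$ well-defined, so $\bigcup_k T_k$ is an infinite, finitely branching tree. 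K\"onig's lemma then supplies an infinite branch $(i_k)_{k \geq 1}^\omega$ with $p = \bigotimes_{k \geq 1} p_{i_k}$. A minor subtlety is that the map from index sequences to Morley products need not be injective (e.g.\ if some $p_i \otimes p_j = p_j \otimes p_i$), but the K\"onig argument is indifferent to this.
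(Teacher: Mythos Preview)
Your proposal is correct and follows essentially the same route as the paper: the tower construction via Proposition~\ref{prop:support}(1) for part (1), the induction on $n$ using continuity of $F_{\mu}^{\psi}$ to pass from a positive-measure condition to an open one for part (2), and the explicit expansion of $\mu^{(k)}$ as a finite positive combination of Diracs at Morley products for part (3). The one noticeable difference is in (3): the paper simply records the identity $\mu^{(k)} = \sum r_{i_1}\cdots r_{i_k}\,\delta_{p_{i_k}\otimes\cdots\otimes p_{i_1}}$ and declares ``by induction, equality holds,'' whereas you invoke K\"onig's lemma to extract an infinite branch. Your argument is valid, but K\"onig is more than you need: since the Morley product is a separated amalgam, the restriction of any $s_k\otimes\cdots\otimes s_1\in\supp(\mu^{(k)})$ to the single variable $x_i$ is exactly $s_i$, so setting $q_i := p|_{x_i}$ already pins down the sequence uniquely (as types) and gives $p|_{x_1,\ldots,x_k} = q_k\otimes\cdots\otimes q_1$ directly, with no compactness needed.
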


\begin{proof} We prove the statements:
\begin{enumerate}
    \item It is clear that $\{r(p): p \in \supp(\mathbb{P}_{\mu})\} \subseteq \{(p_i)_{i \geq 1}^{\omega}: p_i \in \supp(\mu)\}$. It suffices to prove the other direction. Suppose that for each $i \geq 1$, $ p_i \in \supp(\mu)$. We build a type in $\supp(\mathbb{P}_{\mu})$: 
    \begin{enumerate}
        \item Step 1: Choose $q_1(x_1) = p_1(x_1)$. 
        \item Step $n$: Suppose we have constructed a type $q_{n}(x_1,\ldots,x_n)$ such that $q_{n} \in \supp(\mu^{(n)})$ and for each $i \leq n$, $q_{n}|_{x_i} = p_i(x_i)$. By (1) of Proposition \ref{prop:support}, there exists some $q_{n+1} \in \supp(\mu_{x_{n+1}} \otimes \mu^{(n)}_{x_1,\ldots,x_n})$ such that $q_{n+1}|_{x_{n+1}} = p_{n+1}(x_{n+1})$ and $q_{n+1}|_{x_1,\ldots,x_{n}} = q_{n}(x_1,\ldots,x_n)$. 
        \item Consider the type $q = \bigcup_{n =1}^{\omega} q_{n}(x_1,\ldots,x_n)$. We claim that $q$ has the desired properties. 
    \end{enumerate}
    \item By induction and (3) of Proposition \ref{prop:support}, it is straightforward to show that for every $n \geq 1$, $\bigotimes_{i=1}^{n} p_i(x_i) \in \supp(\mu^{(n)})$. It follows quickly that for every sequence $p_1,p_2,p_3,\ldots$ of elements from $\supp(\mu)$, $\bigotimes_{i=1}^{\omega} p_i(x_i) \in \supp(\mathbb{P}_{\mu})$. 
    
    We now prove the density claim. We first show that for every $n \geq 1$,   $\{\bigotimes_{i=1}^{n} p_i(x_i): p_i \in \supp(\mu)\}$ is dense in $\supp(\mu^{(n)})$. Base case is trivial. Suppose the statement holds for $n$. We now show $n+1$. Suppose that $\mu^{(n+1)}(\theta(x_1,\ldots,x_{n+1}))> 0$. It suffices to find some sequence of types $p_1,\ldots,p_{n+1}$ such that $p_i \in \supp(\mu)$ and $\theta(x_1,\ldots,x_{n+1}) \in \bigotimes_{i=1}^{n+1} p_{i}(x_i)$. Since  $\mu^{(n+1)}(\theta(x_1,\ldots,x_{n+1}))> 0$, there exists some $\epsilon > 0$ such that 
        \begin{align*}
        \epsilon &< \mu^{(n+1)}(\theta(x_1,\ldots,x_{n+1})) \\ &= (\mu_{x_{n+1}} \otimes \mu^{(n)}_{x_1,\ldots,x_{n}})(\theta(x_1,\ldots,x_{n+1})) \\ &= \int_{S_{x}(N)} F_{\mu_{x_{n+1}},N}^{\theta} d\mu^{(n)}|_{N}  \\  &= \int_{S_{\bar{x}}(\mathcal{U})} \left(F_{\mu_{x_{n+1}},N}^{\theta}\circ r_{N} \right) d\mu^{(n)},
    \end{align*}
    where $N$ is a small model containing $M$ and all the parameters from $\theta$ and $r_{N}$ is the standard restriction map from $S_{x}(\mathcal{U})$ to $S_{x}(N)$. Let 
    $\Psi \coloneqq \left( F_{\mu_{x_{n+1}},N}^{\theta} \circ r|_{N}\right)|_{\supp(\mu^{(n)})}$. Since the integrand in  equation above is greater than $\epsilon$, we conclude that there exists some $t \in \supp(\mu^{(n)})$ such that $\Psi(t) > \epsilon$. Thus $\Psi^{-1}((\frac{\epsilon}{2},2))$ is non-empty. Since $\mu$ is definable, $F_{\mu_{x_{n+1}},N}^{\theta}$ is continuous and as consequence, so is $\Psi$. Hence $\Psi^{-1}((\frac{\epsilon}{2},2))$ is an open (non-empty) subset of $\supp(\mu^{(n)})$. By our induction hypothesis, there exists some type $s \in \Psi^{-1}((\frac{\epsilon}{2},2))$ such that $s = \bigotimes_{i=1}^{n} p_i(x_i)$. Let $\bar{b} \models s|_{N}$. Then $F_{\mu_{x_{n+1}}}^{\theta}(s|_{N}) = \mu_{x_{n+1}}(\theta(\bar{b},x_{n+1})) > \frac{\epsilon}{2}$.  By \cite[Proposition 2.7]{chernikov2022definable}, there exists some $q \in \supp(\mu)$ such that $\theta(\bar{b},x) \in q$. Then $\theta(x_1,\ldots,x_{n+1}) \in q(x_{n+1}) \otimes s(x_1,\ldots,x_n) = q(x_{n+1}) \otimes  \bigotimes_{i=1}^{n} p_i(x_i)$, which completes the claim. 
    
    To complete the density proof, notice that it suffices to show that if $\mathbb{P}_{\mu}(\theta(x_1,\ldots,x_n)) > 0$ there exists some sequence $p_1,p_2,p_3\ldots$ from $\supp(\mu)$ such that $\bigotimes_{i=1}^{\omega} p_i(x_i) \in [\theta(x_1,\ldots,x_n)]$. Now, if $\mathbb{P}_{\mu}(\theta(x_1,\ldots,x_n)) > 0$, then $\mu^{(n)}(\theta(x_1,\ldots,x_n)) > 0$. By the previous paragraph, there exists $p_1,\ldots,p_n \in \supp(\mu)$ such that $\theta(x_1,\ldots,x_n) \in \bigotimes_{i=1}^{n} p_i(x_i)$. By the first paragraph, the type $\bigotimes_{j=n+1}^{\omega} q_j(x_j) \otimes \bigotimes_{i=1}^{n} p_i(x_i)$ where for any $j \geq n +1$, $q_j$ is an arbitrary element from $\supp(\mu)$ has the desired property. In particular, $\bigotimes_{j=n+1}^{\omega} q_j(x_j) \otimes \bigotimes_{i=1}^{n} p_i(x_i) \in [\theta(x_1,\ldots,x_n)] \cap \supp(\mathbb{P}_{\mu})$.

   \item Notice that 
    \begin{align*}
        \left( \sum_{i=1}^{n} r_i \delta_{p_i} \right)(x_2) \otimes \left( \sum_{i=1}^{n} r_i \delta_{p_i} \right)(x_1) &= \sum_{i=1}^{n}\sum_{j=1}^{n} r_i r_j \left(\delta_{p_i}(x_2) \otimes \delta_{p_j}(x_1)\right) \\ &=\sum_{i=1}^{n}\sum_{j=1}^{n} r_i r_j \delta_{p_i \otimes p_j}(x_2,x_1). \\
    \end{align*}
    By induction, equality holds. \qedhere
\end{enumerate}
\end{proof} 

The next definition ensures that we do not have  certain kinds of pathologies when working with random generic types. The condition below is \emph{in the same vein as} assuming the types one is working with are \emph{not realized}. In practice, most of the measures one actually cares about admit this property. While this assumption will sometimes not be necessary when it is used, it streamlines proofs and cuts down on casework. Most importantly, it guarantees that we do not sample the same point twice.

\begin{definition} Let $\mu \in \mathfrak{M}_{x}(\mathcal{U})$. We say that $\mu$ \emph{has no realized part} if for every $a \in \mathcal{U}$, $\mu(x = a) = 0$. 
\end{definition}

\begin{proposition}\label{prop:concentrate} Suppose that $\mu \in \mathfrak{M}_{x}^{\inv}(\mathcal{U},M)$, $\mu$ is $M$-\weak, and $\mu$ has no realized part. Then for every $p \in \supp(\mathbb{P}_{\mu})$ and $i \neq j \geq 1$, $x_i \neq x_j \in p$. 
\end{proposition}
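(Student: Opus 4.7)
The plan is to reduce the statement to a vanishing of measure. For any distinct $i,j \geq 1$, the set $[x_i = x_j] \subseteq S_{\mathbf{x}}(\mathcal{U})$ is clopen, so if $\mathbb{P}_{\mu}([x_i = x_j]) = 0$ then no $p \in \supp(\mathbb{P}_{\mu})$ can contain the formula $x_i = x_j$: otherwise $[x_i = x_j]$ would be a clopen neighborhood of $p$ with zero measure, contradicting membership in the support. Thus it suffices to show $\mathbb{P}_{\mu}([x_i = x_j]) = 0$ for all $i \neq j$.

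To compute this probability, I would assume without loss of generality that $i < j$ and unwind the definition of $\mathbb{P}_{\mu}$ through the Morley product. By $M$-adequacy, the iteration $\mu^{(j)} = \mu_{x_j} \otimes \mu^{(j-1)}_{x_1,\ldots,x_{j-1}}$ is well-defined and Borel-definable, so
\[
\mathbb{P}_{\mu}([x_i = x_j]) \;=\; \mu^{(j)}(x_i = x_j) \;=\; \int_{S_{x_1,\ldots,x_{j-1}}(M)} F^{x_i = x_j}_{\mu_{x_j},M}(q) \, d\mu^{(j-1)}|_M.
\]
For any $q$ in the domain, fixing a realization $(b_1,\ldots,b_{j-1}) \models q$, the integrand evaluates to $F^{x_i = x_j}_{\mu_{x_j},M}(q) = \mu_{x_j}(x_j = b_i)$. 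The hypothesis that $\mu$ does not concentrate on points then yields $\mu_{x_j}(x_j = b_i) = 0$, so the integrand is identically zero and the integral vanishes, as desired.

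I do not expect any serious obstacle; this is essentially just an unwinding of definitions. The only mild care required is to ensure that the highest-indexed variable $x_j$ plays the role of the ``left'' variable in the Morley product, which is exactly how $\mu^{(j)}$ is constructed in the paper. This placement is what makes the non-concentration hypothesis applicable to the inner measure at every fiber; no appeal to self-associativity or any stronger property beyond Borel-definability of the relevant Morley product is needed (so really only the first two clauses of adequacy are used).
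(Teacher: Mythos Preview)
Your proposal is correct and follows essentially the same approach as the paper's proof: both show $\mathbb{P}_{\mu}([x_i = x_j]) = 0$ by unwinding the Morley product so that the higher-indexed variable is integrated last, making the integrand $\mu(x = b_i) = 0$ by the non-concentration hypothesis, and then conclude by the definition of support. The only cosmetic difference is that the paper reduces immediately to the two-variable product $(\mu_{x_i} \otimes \mu_{x_j})(x_i = x_j)$ (an implicit use of Fact~\ref{fact:easy}), whereas you integrate over $S_{x_1,\ldots,x_{j-1}}(M)$ directly; the substance is identical.
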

\begin{proof} 
Fix $j < i$. If $\varphi(x_i,x_j) \coloneqq (x_i = x_j)$, then for any $q \in S_{x}(M)$ and $b \models q$ we have that
\begin{equation*}
    F_{\mu_{x_i},M}^{\varphi}(q) = \mu_{x_i}(x_i =b) = 0,
\end{equation*}
since $\mu$ has no realized part. Therefore
\begin{equation*}
    \mathbb{P}_{\mu}([(x_i = x_j)]) = (\mu_{x_i} \otimes \mu_{x_j})(x_i = x_j) = \int_{S_{x_{j}}(M)} F_{\mu_{x_i},M}^{\varphi} d\mu_{x_j} = \int_{S_{x_j}(M)} 0 d\mu_{x_j}= 0.  
\end{equation*}
Thus if $x_i = x_j \in p$, then $p \not \in \supp(\mathbb{P}_{\mu})$. By contraposition, the statement holds. 
\end{proof}

\subsection{How to compute?} In order to figure out the probability that a complicated event occurs, one must first know how to compute the probabilities of basic events. The lemmas in this section are a guide to such computations. The following fact was first observed in \cite[Lemma 2.14(i), Theorem 3.2((v) $\to$ (ii))]{NIP3}. Neither proof requires NIP, just the appropriate adequate/excellent hypothesis. 

\begin{fact}\label{fact:easy} Let $\mu \in \mathfrak{M}_{x}^{\inv}(\mathcal{U},M)$ and suppose that $\mu$ is $M$-\weak. Then for any formula $\varphi(x_{i_1},\ldots,x_{i_n}) \in \mathcal{L}_{\mathbf{x}}(\mathcal{U})$ such that $i_1 < \ldots < i_n$, 
\begin{equation*}
    \mathbb{P}_{\mu}\left( [\varphi(x_{i_1},\ldots,x_{i_n})] \right) =  \mu^{(n)}(\varphi(x_1,\ldots,x_{n})).
\end{equation*}    
If $\mu$ is $M$-\strong\ then for any formula $\varphi(x_{j_1},\ldots,x_{j_n}) \in \mathcal{L}_{\mathbf{x}}(\mathcal{U})$ such that the indices $j_1,\ldots,j_n$ are pairwise distinct, 
\begin{equation*}
    \mathbb{P}_{\mu}([\varphi(x_{j_1},\ldots,x_{j_n})]) = \mu^{(n)}(\varphi(x_1,\ldots,x_n)). 
\end{equation*}
\end{fact}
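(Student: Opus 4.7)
The plan is to reduce both claims to equalities among the iterated Morley products $\mu^{(N)}$. By the definition of $\mathbb{P}_\mu$ and Fact \ref{fact:stone}, for any $N \geq \max\{i_1,\ldots,i_n\}$ one has $\mathbb{P}_\mu([\varphi(x_{i_1},\ldots,x_{i_n})]) = \mu^{(N)}(\varphi(x_{i_1},\ldots,x_{i_n}))$, so the task is to identify this with $\mu^{(n)}(\varphi(x_1,\ldots,x_n))$.

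For Part 1, I would prove by strong induction on $N$ the statement: for every $N \geq 1$ and every formula $\theta(x_{i_1},\ldots,x_{i_n})$ with $i_1 < \cdots < i_n \leq N$,
$$\mu^{(N)}(\theta(x_{i_1},\ldots,x_{i_n})) = \mu^{(n)}(\theta(x_1,\ldots,x_n)).$$
Here $M$-\weak ness guarantees that all $\mu^{(N)}$ are well-defined and Borel-definable. The base case $N = 1$ is immediate. For the inductive step, if $N \notin \{i_1,\ldots,i_n\}$, then $\theta$ does not mention $x_N$, and the separated amalgam property of the Morley product gives
$$\mu^{(N)}(\theta) = (\mu(x_N) \otimes \mu^{(N-1)})(\top(x_N) \wedge \theta) = \mu(\top) \cdot \mu^{(N-1)}(\theta) = \mu^{(N-1)}(\theta),$$
closing this case by the IH. If instead $N = i_n$, unrolling the definition of the Morley product yields
$$\mu^{(N)}(\theta(x_{i_1},\ldots,x_N)) = \int \mu\bigl(\theta(a_{i_1},\ldots,a_{i_{n-1}},x_N)\bigr)\, d\mu^{(N-1)}(a_1,\ldots,a_{N-1}),$$
and similarly
$$\mu^{(n)}(\theta(x_1,\ldots,x_n)) = \int \mu\bigl(\theta(c_1,\ldots,c_{n-1},x_n)\bigr)\, d\mu^{(n-1)}(c_1,\ldots,c_{n-1}).$$
Both are integrals of the same Borel function $f(\bar c) := \mu(\theta(\bar c, z))$ of $n-1$ parameter slots. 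The IH at $N-1$, together with Fact \ref{fact:stone}, says that the pushforward of $\mu^{(N-1)}$ along the projection to the coordinates $x_{i_1},\ldots,x_{i_{n-1}}$ agrees with $\mu^{(n-1)}$ (after variable renaming) on all clopens, hence as regular Borel measures; a standard monotone class argument then upgrades this equality to equality of the two integrals.

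For Part 2, given Part 1, it suffices to show that $\mu^{(n)}$ is invariant under permutations of its variable indices. Since $\Sym(n)$ is generated by adjacent transpositions, I only need invariance under swapping $x_k$ and $x_{k+1}$. By self-associativity I can re-parenthesize $\mu^{(n)}$ so that $\mu(x_{k+1}) \otimes \mu(x_k)$ appears as an isolated subfactor, and self-commutativity lets me replace this subfactor by $\mu(x_k) \otimes \mu(x_{k+1})$, producing the iterated product with $x_k$ and $x_{k+1}$ swapped. Part 2 then follows: given $\varphi(x_{j_1},\ldots,x_{j_n})$ with distinct indices, choose $\sigma \in \Sym(n)$ sorting the indices so that $j_{\sigma(1)} < \cdots < j_{\sigma(n)}$, apply Part 1 to this sorted listing, and then invoke permutation invariance of $\mu^{(n)}$ to match the desired right-hand side.

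The main obstacle I expect is the $N = i_n$ case of Part 1, where the IH must be invoked against a Borel-valued integrand rather than only on formulas. The lift is routine via the monotone class theorem once the IH is recognized as the statement that two pushforward Borel probability measures agree on clopens, but correctly bookkeeping the variable relabelings through the pushforward requires some care.
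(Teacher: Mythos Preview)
Your argument is correct. The paper itself does not supply a proof of this fact at all: it is explicitly labeled ``standard and left as an exercise to the reader,'' so there is nothing to compare against beyond checking your outline stands on its own. For Part 1, your induction on $N$ with the two cases (drop the top variable via the separated-amalgam property, or unfold the Morley product when $i_n=N$) is exactly the intended exercise; your observation that the $N=i_n$ case requires matching two integrals via the pushforward along the coordinate projection is the only real content, and the $\pi$-$\lambda$/monotone class step is the right way to close it (indeed, since clopens form a $\pi$-system generating the Borel $\sigma$-algebra, agreement on clopens already forces agreement on all Borel sets, so invoking regularity via Fact~\ref{fact:stone} is harmless but not strictly needed). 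For Part 2, reducing to adjacent transpositions and using self-associativity to isolate the pair $\mu(x_{k+1})\otimes\mu(x_k)$ before applying self-commutativity is the standard move and is sound.
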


The following theorem shows that formulas with disjoint variables are probabilistically independent. 

\begin{theorem}\label{lemma:product} Fix $\mu \in \mathfrak{M}_{x}^{\inv}(\mathcal{U},M)$ such that $\mu$ is $M$-\weak. Let $\{\varphi_{l}(\bar{z}_{l})\}_{l=1}^{n}$ be a collection of $\mathcal{L}_{\mathbf{x}}(\mathcal{U})$-formulas such that: 
\begin{enumerate}
    \item For each $l \leq n$, $\bar{z}_{l} = (x_{j_1^{l}},\ldots,x_{j_{k_{l}}^{l}})$. 
    \item For each $l_1 < l_2 \leq n$ we have that $\{j_{1}^{l_{1}},\ldots,j_{k_{l_1}}^{l_1}\} \cap  \{j_{1}^{l_{2}},\ldots,j_{k_{l_2}}^{l_2} \} = \emptyset$. 
\end{enumerate}
Then 
\begin{equation*}
\mathbb{P}_{\mu} \left( \left[\bigwedge_{l=1}^{n} \varphi_l(\bar{z}_l) \right] \right) = \prod_{l = 1}^{n} \mathbb{P}_{\mu}\left( \left[ \varphi_l(\bar{z}_l) \right]\right) =  \prod_{l=1}^{n} \mu^{(|\bar{z}_l|)}(\varphi_{l}(x_1,\ldots,x_{k_l})).
\end{equation*}
\end{theorem}
\begin{proof}
This follows directly by induction and the fact that the Morley product is a separated amalgam (see Definition \ref{cheat}). We give an explicit proof for two formulas with two variables each, which, with proper bookkeeping, can be extended to the general case in the lemma: Consider the $\mathcal{L}_{\mathbf{x}}(\mathcal{U})$-formulas $\varphi(x_1,x_3)$ and $\psi(x_2,x_4)$. Let $M_1$ be a small model containing $M$ and all the parameters from our formulas. Then by Fact \ref{fact:easy},
\begin{align*}
    \mathbb{P}_{\mu} \left( \left[ \varphi(x_1,x_3) \wedge \psi(x_2,x_4) \right]\right) &= (\mu_{x_4} \otimes \ldots \otimes \mu_{x_1})(\varphi(x_1,x_3) \wedge \psi(x_2,x_4)) \\
    &= \int_{S_{x_1}(M_1)} F_{\mu_{x_4} \otimes \mu_{x_3} \otimes \mu_{x_2}}^{\varphi \wedge \psi} d\mu_{x_1}. 
\end{align*}
Now for any $q \in S_{x_1}(M_1)$, if $a \models q$ then we have that 
\begin{align*}
    F_{\mu_{x_4} \otimes \mu_{x_3} \otimes \mu_{x_2}}^{\varphi \wedge \psi}(q) &= (\mu_{x_4} \otimes \mu_{x_3} \otimes \mu_{x_2})(\varphi(a,x_3) \wedge \psi(x_2,x_4))\\
    &= \int_{S_{x_2}(M_2)} F_{\mu_{x_4} \otimes \mu_{x_3}}^{(\varphi \wedge \psi)_a} d\mu_{x_2},
\end{align*}
where $M_2$ is a small model containing $M_1a$ and $(\varphi \wedge \psi)_{a}$ is the formula $\varphi(a,x_3) \wedge \psi(x_2,x_4)$. Now we note that for any $r \in S_{x_2}(M_2)$, if $b \models r$ then
\begin{align*}
 F_{\mu_{x_4} \otimes \mu_{x_3}}^{(\varphi \wedge \psi)_a}(r) &= (\mu_{x_4} \otimes \mu_{x_3})(\varphi(a,x_3) \wedge \psi(b,x_4)) \\
 &\overset{(*)}{=} \mu_{x_3}(\varphi(a,x_3)) \mu_{x_4}(\psi(b,x_4)) \\
 &= \mu_{x_3}(\varphi(a,x_3)) \cdot F_{\mu_{x_4}}^{\psi}(r),
\end{align*}
where $(*)$ holds since the Morley product is a separated amalgam (again, see Definition \ref{cheat}). And so, 
\begin{align*}
    \int_{S_{x_2}(M_2)} F_{\mu_{x_4} \otimes \mu_{x_3}}^{(\varphi \wedge \psi)_a} d\mu_2 &= \int_{S_{x_2}(M_2)} \mu_{x_3}(\varphi(a,x_3)) \cdot F_{\mu_{x_4}}^{\psi}d\mu_2 \\
    &= \mu_{x_3}(\varphi(a,x_3))\int_{S_{x_2}(M_2)}  F_{\mu_{x_4}}^{\psi} d\mu_2 \\
    &= \mu_{x_3}(\varphi(a,x_3)) \cdot (\mu_{x_4} \otimes \mu_{x_2})(\psi(x_2,x_4)) \\
    &= F_{\mu_{x_3}}^{\varphi}(q) \cdot (\mu_{x_4} \otimes \mu_{x_2})(\psi(x_2,x_4)).
\end{align*}
Therefore, 
\begin{align*}
 \mathbb{P}_{\mu} \left( [\varphi(x_1,x_3) \wedge \psi(x_2,x_4)]\right) &= \int_{S_{x_1}(M_1)} F_{\mu_{x_4} \otimes \mu_{x_3} \otimes \mu_{x_2}}^{\varphi \wedge \psi} d\mu_{x_1} \\
 &= \int_{S_{x_1}(M_1)} F_{\mu_3}^{\varphi} \cdot (\mu_{x_4} \otimes \mu_{x_2})(\psi(x_2,x_4)) d\mu_{x_1} \\ 
 &= (\mu_{x_4} \otimes \mu_{x_2})(\psi(x_2,x_4)) \cdot \int_{S_{x_1}(M_1)} F_{\mu_3}^{\varphi}  d\mu_{x_1} \\ 
 &= (\mu_{x_4} \otimes \mu_{x_2})(\psi(x_2,x_4)) \cdot (\mu_{x_3} \otimes \mu_{x_1})(\varphi(x_1,x_3)) \\
 &= \mathbb{P}_{\mu}([\psi(x_2,x_4)]) \cdot \mathbb{P}_{\mu}([\varphi(x_1,x_3)]). 
\end{align*}
We also remark that by renaming variables (Fact \ref{fact:easy}) we have the following equality,
\begin{equation*}
    (\mu_{x_4} \otimes \mu_{x_2})(\psi(x_2,x_4)) \cdot (\mu_{x_3} \otimes \mu_{x_1})(\varphi(x_1,x_3)) = \mu^{(2)}(\psi(x_1,x_2)) \cdot \mu^{(2)}(\varphi(x_1,x_2)).
\end{equation*}
And so the proof is complete. 
\end{proof}

The next lemma shows us how to compute when there are some basic dependencies between the formulas in our family of definable sets (specifically a block of shared variables followed by disjoint families of variables as in \cref{lemma:product}). This computation will become quite important in the next section. 

\begin{lemma}\label{label:conditional} Let $\mu \in \mathfrak{M}_{x}^{\inv}(\mathcal{U},M)$. Suppose that $\mu$ is $M$-\weak. Let $\{\varphi_{l}(\bar{x},\bar{z}_l)\}_{l = 1}^{n}$ be a collection of $\mathcal{L}_{\mathbf{x}}(\mathcal{U})$-formulas such that 
\begin{enumerate}
    \item $\bar{x} = (x_{i_1},\ldots,x_{i_m})$ and for each $l \leq n$, $\bar{z}_l = (x_{j^{l}_1},\ldots,x_{j^{l}_{k_l}})$. 
    \item $i_1 < \ldots < i_m$ and for each $l \leq n$, $j_1^{l} < \ldots < j_{k_l}^{l}$. 
    \item For each $l \leq n$, $\max\{i_1,\ldots,i_m\} < \min\{ j^{l}_1,\ldots,j^{l}_{k_l}\}$. 
    \item For each $l_1, l_2 \leq n$, if $l_1 < l_2$ then $\{ j_1^{l_1},\ldots,j_{k_l}^{l_1}\} \cap  \{ j_1^{l_2},\ldots,j_{k_l}^{l_2}\} = \emptyset$.
\end{enumerate}
Then 
\begin{equation*}
    \mathlarger{\mathbb{P}_{\mu} \left(\bigcap_{l=1}^{n} [\varphi_l(\bar{x},\bar{z}_l)] \right) = \int_{S_{\bar{x}}(M')} \prod_{l=1}^{n}F_{\mu_{\bar{z}_l}}^{\varphi_l}\,d\mu_{\bar{x}}}
\end{equation*}
where
\begin{enumerate}
    \item $M \subseteq M'$ and $M'$ contains all the parameters from $\{\varphi_{l}(\bar{x},\bar{z}_{l})\}_{l=1}^{n}$. 
    \item $\mu_{\bar{x}} = \mu_{x_{i_{m}}} \otimes \ldots \otimes  \mu_{x_{i_1}}$. 
    \item For each $l \leq n$, $\mu_{\bar{z}_{l}} = \mu_{x_{j^{l}_{k_{l}}}} \otimes \ldots \otimes \mu_{x_{j_{1}^{l}}}$. 
\end{enumerate}
\end{lemma}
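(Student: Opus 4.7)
My plan is to peel off the $\bar{z}_l$ variables first and leave $\bar{x}$ as the outer integration variable. Because every index appearing in any $\bar{z}_l$ strictly exceeds every index in $\bar{x}$ by condition~(3), self-associativity built into $M$-adequacy together with \cref{fact:easy} lets me split the relevant iterated Morley product as $\mu_{\mathrm{top}}\otimes\mu_{\bar{x}}$, where $\mu_{\mathrm{top}}$ is the iterated Morley product over the variables in $\bigcup_l\bar{z}_l$ (taken in decreasing order of their indices). Unwinding the outer $\otimes$ according to its integral definition then gives
\[
\mathbb{P}_{\mu}\!\left(\bigcap_{l=1}^{n}[\varphi_l(\bar{x},\bar{z}_l)]\right) \;=\; \int_{S_{\bar{x}}(M')} G(q)\,d\mu_{\bar{x}}(q),
\]
where $G(q) = \mu_{\mathrm{top}}\!\left(\bigwedge_{l=1}^{n}\varphi_l(\bar{a},\bar{z}_l)\right)$ for $\bar{a}\models q$. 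So the task reduces to showing $G(q) = \prod_{l=1}^{n} F^{\varphi_l}_{\mu_{\bar{z}_l}}(q)$ for each $q$.

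For this inner factorization, I would use condition~(4): once $\bar{a}$ is fixed, the formulas $\varphi_l(\bar{a},\bar{z}_l)$ are $\mathcal{L}_{\mathbf{x}}(\mathcal{U})$-formulas in pairwise disjoint variable blocks, which is exactly the setup of \cref{lemma:product}. Applying (the proof of) \cref{lemma:product} to the inner computation yields
\[
\mu_{\mathrm{top}}\!\left(\bigwedge_{l=1}^{n}\varphi_l(\bar{a},\bar{z}_l)\right) \;=\; \prod_{l=1}^{n}\mu_{\bar{z}_l}\!\big(\varphi_l(\bar{a},\bar{z}_l)\big) \;=\; \prod_{l=1}^{n} F^{\varphi_l}_{\mu_{\bar{z}_l}}(q).
\]
Substituting this back into the outer integral delivers the desired identity. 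Borel-measurability of the integrand is automatic, since each $F^{\varphi_l}_{\mu_{\bar{z}_l}}$ is Borel by the Borel-definability of the $\mu^{(k)}$'s (part of $M$-adequacy), and finite products of Borel functions are Borel.

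The main obstacle I anticipate is the bookkeeping of the index order. The blocks $\bar{z}_l$ may interleave within the ambient ordering of the $x_i$'s (for instance, $\bar{z}_1=(x_4,x_7)$ and $\bar{z}_2=(x_5,x_6)$), so self-associativity alone—which only permits reparenthesization, not permutation of factors—does not immediately entitle us to write $\mu_{\mathrm{top}}=\bigotimes_l\mu_{\bar{z}_l}$. The right formalization is to observe that the genuine Morley iterate $\mu_{\mathrm{top}}$ and any reordered factorization agree on clopens of the form $\bigwedge_l\psi_l(\bar{z}_l)$ (by applying the separated amalgam property inductively), and hence they agree on the Borel $\sigma$-algebra they generate via the $\pi$-system uniqueness argument already used elsewhere in the paper. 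This observation is essentially repeating, with $\bar{a}$ adjoined to the parameters, the reasoning powering \cref{lemma:product}, so no new machinery is required—the difficulty is purely notational.
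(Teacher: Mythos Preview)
Your proposal is correct and follows essentially the same route as the paper: split the iterated Morley product as (top block) $\otimes\,\mu_{\bar x}$ using self-associativity and condition~(3), unwind the outer product as an integral over $S_{\bar x}(M')$, and then factor the integrand via \cref{lemma:product} using condition~(4). Your final paragraph is more cautious than necessary---\cref{lemma:product} is already stated for arbitrary pairwise-disjoint variable blocks with no non-interleaving requirement, so it applies directly to $\mu_{\mathrm{top}}\big(\bigwedge_l\varphi_l(\bar a,\bar z_l)\big)$ without any additional $\pi$-system argument.
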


\begin{proof}  We let $\psi_{l}(\bar{x},\bar{z}) \coloneqq \bigwedge_{l=1}^{n} \varphi_l(\bar{x},\bar{z}_l)$. Let $\bigcup_{l=1}^{n}\bar{z}_{l} = \{x_{t_{1}} < \ldots < x_{t_{q}}\}$. Now consider the following computation:
\begin{align*}
    \mathbb{P}_{\mu} \left( \bigcap_{l=1}^{n} [\varphi_l(\bar{x},\bar{z}_l)] \right) &= \mathbb{P}_{\mu} \left( \left[ \bigwedge_{l=1}^{n} \varphi_l(\bar{x},\bar{z}_l) \right] \right) \\
    &=  \left( \mu_{x_{t_{q}}} \otimes \left( \ldots \otimes \left( \mu_{x_{t_1}} \otimes \mu_{\bar{x}} \right) \ldots \right) \right) \left( \left[ \mathlarger{\bigwedge_{l=1}^{n}} \varphi_l(\bar{x},\bar{z}_{l}) \right] \right)\\
    &\overset{(*)}{=}  \left(\left(  \mu_{x_{t_q}} \otimes \left(  \ldots \otimes \mu_{x_{t_1}} \right) \ldots \right) \otimes  \mu_{\bar{x}} \right) \left( \left[ \mathlarger{\bigwedge_{l=1}^{n}} \varphi_l(\bar{x},\bar{z}_{l}) \right] \right)\\
    &= \int_{S_{\bar{x}}(M')} F_{\mu_{x_{t_q}} \otimes \ldots \otimes \mu_{x_{t_1}} }^{\psi} d \mu_{\bar{x}}\\
    &\overset{(**)}{=} \int_{S_{\bar{x}}(M')} \mathlarger{\prod_{l=1}^{n}} F_{\mu_{\bar{z}_l}}^{\varphi_l} d\mu_{\bar{x}}.
\end{align*}
We remark that equation $(*)$ follows from self-associativity. We now justify equation $(**)$. For any $q \in S_{\bar{x}}(M')$, if $\bar{a} \models q$ then we have the following:
\begin{align*}
    F_{\mu_{x_{t_{q}}} \otimes \ldots \otimes \mu_{x_{t_1}} }^{\psi}(q) &= \left(\mu_{x_{t_q}} \otimes \ldots \otimes \mu_{x_{t_1}} \right) \left( \bigwedge_{l=1}^{n} \varphi_l(\bar{a},\bar{z}_l) \right) \\
    &\overset{(\dagger)}{=}\mathlarger{\prod_{l=0}^{n} \mu_{\bar{z}_{l}}(\varphi_l(\bar{a},\bar{z}_l))} \\
    &=\mathlarger{\prod_{l=1}^{n} F_{\mu_{\bar{z}_l}}^{\varphi_l}(q)}.
\end{align*}
Equation $(\dagger)$ follows by \cref{lemma:product}. 
\end{proof}

The following is essentially an application of Borel-Cantelli. For clarity, we provide a proof.

\begin{proposition}\label{prop:inf-often}  Let $\mu \in \mathfrak{M}_{x}^{\inv}(\mathcal{U},M)$. Suppose that $\mu$ is $M$-\weak. For any $\mathcal{L}_{\mathbf{x}}(\mathcal{U})$-formula $\varphi(x_1,\ldots,x_n)$, we can consider the event
\begin{equation*}
    A_{\varphi} \coloneqq \{p \in S_{\mathbf{x}}(\mathcal{U}): \exists^{\infty} (i_1,\ldots,i_n) \text{ such that } \varphi(x_{i_1},\ldots,x_{i_n}) \in p.\}
\end{equation*}
If $\mu^{(n)}(\varphi(x_1,\ldots,x_{n})) > 0$, then $\mathbb{P}_{\mu}(A_{\varphi}) =1$. 
\end{proposition}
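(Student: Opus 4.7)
Set $c \coloneqq \mu^{(n)}(\varphi(x_1,\dots,x_n)) > 0$. The plan is to exhibit an explicit independent sequence of ``witness events'' by partitioning $\mathbb{N}_{\geq 1}$ into pairwise disjoint consecutive blocks of size $n$, so that the hypotheses of Lemma \ref{lemma:product} are automatic. Concretely, for $k \geq 1$ let $B_k \coloneqq \{(k-1)n+1,\dots,kn\}$ and set
\[
E_k \coloneqq \bigl[\varphi(x_{(k-1)n+1},\dots,x_{kn})\bigr] \subseteq S_{\mathbf{x}}(\mathcal{U}).
\]

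Because the indices in each $E_k$ are strictly increasing, Fact \ref{fact:easy} gives $\mathbb{P}_{\mu}(E_k) = c$ for every $k$. Because the blocks $B_k$ are pairwise disjoint, Lemma \ref{lemma:product} applies to any finite subfamily of $\{E_k\}_{k \geq 1}$ and to any finite subfamily of $\{E_k^c\}_{k \geq 1}$; in particular the $E_k$ are probabilistically (fully) independent, with $\mathbb{P}_\mu\bigl(\bigcap_{k \in F} E_k^c\bigr) = (1-c)^{|F|}$ for every finite $F$.

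From here the conclusion is immediate, either by citing the second Borel-Cantelli lemma or by direct computation: for each $N \geq 1$ and each $M \geq 1$,
\[
\mathbb{P}_{\mu}\Bigl(\bigcap_{k \geq N} E_k^c\Bigr) \leq \mathbb{P}_\mu\Bigl(\bigcap_{k=N}^{N+M-1} E_k^c\Bigr) = (1-c)^{M},
\]
which tends to $0$ as $M \to \infty$. Thus $\mathbb{P}_\mu\bigl(\bigcap_{k \geq N} E_k^c\bigr) = 0$ for each $N$, so the event that only finitely many $E_k$ occur is $\mathbb{P}_\mu$-null. On the complementary full-measure event, infinitely many $E_k$ hold; but then the distinct tuples $\bigl((k-1)n+1,\dots,kn\bigr)$ across those infinitely many $k$ all witness $\varphi \in p$, so $p \in A_{\varphi}$. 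Hence $\mathbb{P}_{\mu}(A_{\varphi}) = 1$.

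I do not expect any genuine obstacle. The only nontrivial input is the independence of the events $E_k$, which is precisely what Lemma \ref{lemma:product} delivers for formulas in disjoint variables; once that is in hand, the statement reduces to the standard fact that an infinite independent sequence of events of constant positive probability occurs infinitely often almost surely.
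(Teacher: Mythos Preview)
Your proof is correct and follows essentially the same approach as the paper: partition the indices into disjoint consecutive blocks of length $n$, use Lemma~\ref{lemma:product} (together with Fact~\ref{fact:easy}) to obtain independence of the corresponding events, and conclude by the standard geometric-decay/Borel--Cantelli argument. The paper phrases the final step via the explicit containment $\bigcap_{t \geq 1}\bigcup_{t < i_1 < \dots < i_n}[\varphi(x_{i_1},\dots,x_{i_n})] \subseteq A_\varphi$ rather than invoking Borel--Cantelli by name, but the content is the same.
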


\begin{proof} We first notice that 

\begin{equation*}
 \bigcap_{ t = 1}^{\omega} \left(  \bigcup_{ t < i_1 <\ldots < i_n } [\varphi(x_{i_1},\ldots,x_{i_n})] \right) \subseteq A_{\varphi}. 
\end{equation*}
Hence it suffices to prove the term on the left is measure 1. Fix $t \geq 1$. Notice that
\begin{align*}
    \mathbb{P}_{\mu} \left( \bigcup_{ t < i_1 <\ldots < i_n } [\varphi(x_{i_1},\ldots,x_{i_n})]  \right) &\geq \mathbb{P}_{\mu}\left( \bigcup_{l > t}^{\omega} [\varphi(x_{l \cdot n},\ldots,x_{l\cdot n+n})]\right) \\
    &= \lim_{\substack{k \to \infty\\ k > t}} \mathbb{P}_{\mu}\left( \left[ \bigvee_{l >t}^{k} \varphi(x_{l \cdot n},\ldots,x_{l\cdot n+n}) \right]\right) \\
    &= 1- \lim_{\substack{k \to \infty\\ k > t}} \mathbb{P}_{\mu}\left(\left[\bigwedge_{l=t}^{k} \neg \varphi(x_{ln},\ldots,x_{ln+n}) \right]\right)\\
    &\overset{(*)}{=} 1- \lim_{\substack{k \to \infty\\ k > t}} \left( 1 -\mu^{(n)}( \varphi(x_1,\ldots,x_n)) \right)^{k -t}\\
    &= 1 
\end{align*}
Equation $(*)$ follows from \cref{lemma:product} and Fact \ref{fact:easy}, and since the intersection of countably many sets of full measure has full measure, the statement holds. 
\end{proof}

\section{Induced substructures} 

Throughout this section, $\mathcal{L}$ is a countable relational language. We address the following question:

\begin{question}\label{question:main}
Given an $M$-\weak\ measure $\mu \in \mathfrak{M}_{x}^{\inv}(\mathcal{U},M)$, is there an $\mathcal{L}$-structure $N$ such that the induced structure on almost all random generic types is isomorphic to $N$? 
\end{question}

First, we show how to turn non-redundant types in infinitely many variables into $\mathcal{L}$-structures.

\begin{definition} Consider $\mathcal{U}'$ where $\mathcal{U} \prec \mathcal{U}'$ and $\mathcal{U}'$ is $|\mathcal{U}|^{+}$-saturated. Given a type $p \in S^{+}_{\mathbf{x}}(\mathcal{U})$ and a tuple $\mathbf{a} = (a_1,a_2,\ldots) \in (\mathcal{U}')^{\omega}$ such that $\mathbf{a} \models p$, one can consider the $\mathcal{L}$-structure $\mathbf{a}_{\mathcal{L}}$ which is simply the induced structure from $\mathcal{U}'$ onto the set $\{a_i: i \geq 1\}$. We remark that if $\mathbf{a}, \mathbf{b} \models p$ then $\mathbf{a}_{\mathcal{L}} \cong \mathbf{b}_{\mathcal{L}}$. In particular, the map $f(a_i) = b_i$ is an isomorphism between the induced structures.  Hence we let $p_{\mathcal{L}}$ be the isomorphism type of any/all the realizations of $p$. One can also think about this process as constructing a structure on the set $\{x_i: i \geq 1\}$ where the points in the model are the variables, but this causes a lot of notational confusion and so we will avoid writing this. 
\end{definition} 

Formalizing Question \ref{question:main}: given an $M$-\weak\ measure $\mu$, does there exist an $\mathcal{L}$-structure $N$ such that the event
\begin{equation*}
    B_{N} \coloneqq \{p \in S_{\mathbf{x}}(\mathcal{U}): p_{\mathcal{L}} \cong N\},
\end{equation*}
has $\mathbb{P}_{\mu}$-measure 1? There are several \emph{relatively easy contexts} in which the answer to the above question is yes. We will briefly discuss these later in this section. There are also quite a few examples where the answer is no. Our main affirmative result occurs in the following setting: If $\mathcal{L}$ is a finite relational language and $\mu$ satisfies a particular extension axiom, then there exists an $\mathcal{L}$-structure $N$ with the desired properties. Moreover, $\Th_{\mathcal{L}}(N)$ is countably categorical.

To address this, we now make explicit the connection between generic sampling and the space of labeled $\mathcal{L}$-structures. We use this transfer map to show that the events of the form $B_N$ are $\mathbb{P}_{\mu}$-measurable. 

\begin{definition}\label{def:G_map} We define a map $g: S_{\mathbf{x}}^{+}(\mathcal{U}) \to \str_{\mathcal{L}}$ where $g(p) = M_{p}$ is defined by
\begin{equation*}
    M_p \models R(i_1,\ldots,i_n) \Longleftrightarrow R(x_{i_1},\ldots,x_{i_n}) \in p. 
\end{equation*}
We remark that $g$ is continuous. 
\end{definition}

\begin{remark}\label{remark:connection} Suppose that $\mu \in \mathfrak{M}_{x}^{\inv}(\mathcal{U},M)$ is $M$-adequate and has no realized part. Then by Proposition \ref{prop:concentrate} we have that $\supp(\mathbb{P}_{\mu}) \subseteq S_{\mathbf{x}}^{+}(\mathcal{U})$. Thus by Definition \ref{def:G_map}, we can construct a measure on the space of labeled $\mathcal{L}$-structures by considering the push-forward of $\mathbb{P}_{\mu}$ by $g$. 
\end{remark}

The iterated Morley product and its associated measure on the space of labeled $\mathcal{L}$-structures agree on quantifier-free formulas. 

\begin{proposition}\label{prop:qf}  Suppose that $\mu \in \mathfrak{M}_{x}^{\inv}(\mathcal{U},M)$ is $M$-adequate and has no realized part. Then for any quantifier-free $\mathcal{L}$-formula $\theta(x_{i_1},\ldots,x_{i_n})$ and distinct integers $i_1,\ldots,i_n$, 
\begin{equation*}
    g_*(\mathbb{P}_{\mu})(\llbracket\theta(i_1,\ldots,i_n)\rrbracket) = \mathbb{P}_{\mu}(\theta(x_{i_1},\ldots,x_{i_n})). 
\end{equation*}
\end{proposition}

\begin{proof} Follows directly from the definition of $g$. 
\end{proof}

The above proposition cannot be extended to other kinds of formulas. 

\begin{warning}  Consider $M = (\mathbb{R},<) \prec \mathcal{U}$ and $p = \{x > a: a \in \mathcal{U}\}$. Then $p$ is $\emptyset$-definable and thus definable over $M$. This implies that $p$ is $M$-adequate. Now $\mathbb{P}_{\delta_{p}}$ is just the Dirac measure corresponding to the type of a Morley sequence in $p$ over $\mathcal{U}$. Thus $g_{*}(\mathbb{P}_{\delta_p})$ is the Dirac measure concentrating on the labeled $\mathcal{L}$-structure $M = (\mathbb{N}_{>0},<)$, where the labels align with the appropriate numbers. Consider the formula $\psi(x_1) := \exists y (y < x_1)$. Then 
\begin{equation*}
    \mathbb{P}_{\delta_{p}}(\psi(x_1)) = 1 \neq 0 = g_{*}(\mathbb{P}_{\delta_{p}})(\llbracket \psi(1) \rrbracket) =\delta_{M}(\llbracket \psi(1) \rrbracket). 
\end{equation*}
\end{warning}

Before continuing any further, we remark that for any $\mathcal{L}$-structure $N$, the event $B_{N}$ is $\mathbb{P}_{\mu}$-measurable whenever $\mu$ has no realized part.

\begin{proposition}\label{prop:borel} Let $\mu \in \mathfrak{M}_{x}^{\inv}(\mathcal{U},M)$, and suppose that $\mu$ is $M$-adequate and has no realized part. Then for any $\mathcal{L}$-structure $N$, the event $B_{N} \subseteq S_{\mathbf{x}}(\mathcal{U})$ is $\mathbb{P}_{\mu}$-measurable (with respect to the completion).
\end{proposition}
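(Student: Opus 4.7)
The plan is to exhibit $B_{N}$ as a Borel (and hence $\mathbb{P}_{\mu}$-measurable) subset of $S_{\mathbf{x}}(\mathcal{U})$. First, I note that $B_{N}$ is automatically contained in $S^{+}_{\mathbf{x}}(\mathcal{U})$, since $p_{\mathcal{L}}$ is only defined there, and $S^{+}_{\mathbf{x}}(\mathcal{U}) = \bigcap_{i \neq j} [x_{i} \neq x_{j}]$ is itself Borel (Proposition \ref{prop:concentrate} guarantees this set has full measure, explaining the hypotheses). If $N$ is finite or uncountable, then no $p \in S^{+}_{\mathbf{x}}(\mathcal{U})$ can satisfy $p_{\mathcal{L}} \cong N$ because $p_{\mathcal{L}}$ is always countably infinite; in either case $B_{N} = \emptyset$ and there is nothing to prove. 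So I may reduce to the case of countably infinite $N$.

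Next, I would introduce the Polish space $\mathrm{Mod}_{\mathcal{L}}(\mathbb{N}^{+})$ of $\mathcal{L}$-structures with universe $\mathbb{N}^{+}$, equipped with the logic topology generated by the sub-basic clopens $U_{R, \bar\imath} = \{M : M \models R(i_{1}, \ldots, i_{k})\}$. Since $\mathcal{L}$ is countable relational, this is a compact metrizable space, homeomorphic to a product of Cantor spaces. Define $\Phi : S^{+}_{\mathbf{x}}(\mathcal{U}) \to \mathrm{Mod}_{\mathcal{L}}(\mathbb{N}^{+})$ by declaring $R^{\Phi(p)}(i_{1}, \ldots, i_{k})$ to hold exactly when $R(x_{i_{1}}, \ldots, x_{i_{k}}) \in p$. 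Then $\Phi^{-1}(U_{R, \bar\imath}) = [R(x_{i_{1}}, \ldots, x_{i_{k}})] \cap S^{+}_{\mathbf{x}}(\mathcal{U})$, so $\Phi$ is continuous; and by the definition of the induced structure $p_{\mathcal{L}}$, one has $p \in B_{N}$ if and only if $\Phi(p) \cong N$ as $\mathcal{L}$-structures.

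The remaining ingredient is that the isomorphism class $I_{N} := \{M \in \mathrm{Mod}_{\mathcal{L}}(\mathbb{N}^{+}) : M \cong N\}$ is Borel in $\mathrm{Mod}_{\mathcal{L}}(\mathbb{N}^{+})$. This is a classical result of invariant descriptive set theory going back to Scott and López-Escobar: one attaches to the countable $N$ a Scott sentence $\sigma_{N}$ in $\mathcal{L}_{\omega_{1}\omega}$ whose countable models are precisely the isomorphic copies of $N$, and satisfaction of any $\mathcal{L}_{\omega_{1}\omega}$-sentence is a Borel condition in the logic topology. Granting this, $B_{N} = \Phi^{-1}(I_{N})$ is the continuous preimage of a Borel set, hence Borel in $S^{+}_{\mathbf{x}}(\mathcal{U})$ and therefore in $S_{\mathbf{x}}(\mathcal{U})$.

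The main conceptual obstacle is the Borelness of $I_{N}$, which is the one genuinely non-elementary fact. For the paper's purposes one could alternatively avoid Scott entirely and settle for showing $I_{N}$ is analytic — it is the projection of the closed set $\{(M, \pi) : \pi \text{ is an isomorphism } M \to N\} \subseteq \mathrm{Mod}_{\mathcal{L}}(\mathbb{N}^{+}) \times \mathrm{Sym}(\mathbb{N}^{+})$ — since analytic subsets of a Polish space are universally measurable, and so $B_{N} = \Phi^{-1}(I_{N})$ would still be $\mathbb{P}_{\mu}$-measurable after pushing the measure forward by $\Phi$. Either route closes the argument; I would pick whichever reads more cleanly in context, and flag that the adequate/non-concentrating hypotheses are used only to ensure that the event $B_{N}$ lives almost everywhere inside $S^{+}_{\mathbf{x}}(\mathcal{U})$.
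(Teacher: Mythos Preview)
Your proposal is correct and follows essentially the same route as the paper: define a continuous map from $S^{+}_{\mathbf{x}}(\mathcal{U})$ into the Polish space of labelled $\mathcal{L}$-structures on $\mathbb{N}$, invoke Scott's theorem that the isomorphism class of a countable structure is Borel, and pull back. The paper cites Kechris for the Scott fact and does not separately treat the finite/uncountable cases or mention the analytic/universal-measurability alternative, but these are minor elaborations rather than a different argument.
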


\begin{proof} Since $\mu$ has no realized part, Proposition \ref{prop:concentrate} implies that $\supp(\mathbb{P}_{\mu}) \subseteq S_{\mathbf{x}}^{+}(\mathcal{U})$.
We remark that $p \in B_{N} \cap S_{\mathbf{x}}^{+}(\mathcal{U})$ if and only if $M_p \cong N$. Hence, 
\begin{equation*}B_{N} \cap S_{\mathbf{x}}^{+}(\mathcal{U}) = g^{-1}(\{ M \in \str_{\mathcal{L}} : M \cong N \}). 
\end{equation*}
A classical result of Scott implies that $\{M \in \str_{\mathcal{L}} : M \cong N \}$ is Borel (e.g., see \cite[II.16.6]{Kechris}). Since $g$ is continuous, the statement holds.
\end{proof}

\subsection{Some relatively easy contexts} Before working toward our main result, we briefly consider some relatively easy contexts where the answer to Question \ref{question:main} is yes. While the proofs in this subsection are straightforward, the statements provide useful sanity checks for the kinds of theorems one might expect to prove. 

\begin{proposition} Suppose that $p \in S^{\inv}_{x}(\mathcal{U},M)$. Consider the measure $\mu = \delta_{p} \in \mathfrak{M}_{x}^{\inv}(\mathcal{U},M)$. Then there exists a structure $N$ such that $\mathbb{P}_{\mu}(B_{N}) = 1$. 
\end{proposition}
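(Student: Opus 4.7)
The plan is to observe that when $\mu = \delta_p$ for an invariant type $p$, the probability measure $\mathbb{P}_\mu$ collapses to a Dirac measure at a single type, namely the infinite Morley product $p^{(\omega)}$. Once this is established, defining $N_\mu$ as the induced structure on any realization of $p^{(\omega)}$ immediately yields $\mathbb{P}_\mu(B_{N_\mu}) = 1$.

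First I would verify by induction that $\mu^{(n)} = \delta_{p^{(n)}}$, where $p^{(n)}(x_1,\ldots,x_n)$ is the usual $n$-fold Morley product of $p$ with itself. The base case $n=1$ is immediate. For the inductive step, using the definition of the Morley product with a Borel-definable measure, for any formula $\varphi(x_1,\ldots,x_{n+1})$ with parameters in some small $M' \supseteq M$:
\begin{equation*}
\mu^{(n+1)}(\varphi) = \int_{S_{x_1,\ldots,x_n}(M')} F_{\mu_{x_{n+1}}}^{\varphi}\, d\mu^{(n)}|_{M'} = F_{\mu_{x_{n+1}}}^{\varphi}(p^{(n)}|_{M'}),
\end{equation*}
since $\mu^{(n)} = \delta_{p^{(n)}}$ by hypothesis. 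If $\bar{a} \models p^{(n)}|_{M'}$, this equals $\mu(\varphi(\bar{a},x_{n+1})) = \mathbf{1}[\varphi(\bar{a},x_{n+1}) \in p]$, which is exactly $\mathbf{1}[\varphi \in p(x_{n+1}) \otimes p^{(n)}(x_1,\ldots,x_n)] = \delta_{p^{(n+1)}}(\varphi)$. Taking the inverse limit (or using Fact \ref{fact:stone} applied to the compatible system $\{\delta_{p^{(n)}}\}_{n\geq 1}$), we conclude that $\mathbb{P}_\mu = \delta_{p^{(\omega)}}$ where $p^{(\omega)} = \bigcup_n p^{(n)} \in S_{\mathbf{x}}(\mathcal{U})$.

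Next, I would define $N_\mu$ as the isomorphism type of the induced structure on any realization of $p^{(\omega)}$. If $p$ is non-realized (i.e.\ does not concentrate on a point), then by Proposition \ref{prop:concentrate} we have $p^{(\omega)} \in S_{\mathbf{x}}^{+}(\mathcal{U})$ and so $N_\mu = (p^{(\omega)})_\mathcal{L}$ is well-defined as in the section's opening definition; this induced structure is simply the $\mathcal{L}$-structure on a Morley sequence in $p$ over $M$. In the degenerate case where $p = \operatorname{tp}(a/\mathcal{U})$ concentrates on a point, take $N_\mu$ to be the one-point substructure generated by $a$.

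Finally, since $p^{(\omega)} \in B_{N_\mu}$ by construction, we have
\begin{equation*}
\mathbb{P}_\mu(B_{N_\mu}) = \delta_{p^{(\omega)}}(B_{N_\mu}) = 1,
\end{equation*}
where measurability of $B_{N_\mu}$ with respect to the completion of $\mathbb{P}_\mu$ is automatic since $\mathbb{P}_\mu$ is Dirac. There is no serious obstacle here; the only care needed is the bookkeeping in the inductive identification of $\mu^{(n)}$ with $\delta_{p^{(n)}}$ and checking that the Morley product indeed coincides with the usual Morley product of the invariant type $p$ on the nose (rather than just almost surely), which is exactly what the Dirac assumption guarantees.
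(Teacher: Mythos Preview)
Your proof is correct and takes essentially the same approach as the paper: both observe that $\mathbb{P}_\mu$ is the Dirac measure at $p^{(\omega)}$ (equivalently, $\supp(\mathbb{P}_\mu) = \{p^{(\omega)}\}$) and then take $N_\mu = (p^{(\omega)})_{\mathcal{L}}$. Your version is more detailed, spelling out the induction $\mu^{(n)} = \delta_{p^{(n)}}$ and explicitly handling the degenerate realized case, whereas the paper's proof is a single sentence.
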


\begin{proof} Notice that $|\supp(\mathbb{P}_{\mu})| = |\{p^{(\omega)}\}|= 1$. So $N$ is precisely the isomorphism type of $(p^{(\omega)})_{\mathcal{L}}$. 
\end{proof}

A priori, the measure $\mu$ in the previous proposition might not be Borel-definable. This is not a problem because the Morley product can be extended to a more general context where one asks that certain measurability conditions hold. These conditions always hold when the measure is a type. See \cite{CGH} for a discussion.

The next result shows that for any countable $\mathcal{L}$-structure $N$, there exists a measure $\mu_{N}$ such that $\mathbb{P}_{\mu_{N}}(B_{N}) = 1$. This is done by simply choosing a sum of realized types which concentrates on $N$. However, we note that the measures in the next example \emph{have realized parts}. 

\begin{proposition} Let $N$ be any countable subset of $M$ and consider the measure $\mu = \sum_{a \in N} r_{a} \delta_{\tp(a/\mathcal{U})}$ where $\sum_{a \in N} r_{a} = 1$. Then $\mu \in \mathfrak{M}_{x}^{\inv}(\mathcal{U},M)$ and $\mu$ is smooth (and therefore definable). Moreover $\mathbb{P}_{\mu}(B_{N}) = 1$.
\end{proposition}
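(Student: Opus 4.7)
The plan has three steps matching the three claims. First, $M$-invariance and the Keisler-measure axioms are immediate: each $\delta_a$ with $a \in M$ is $M$-realized, hence trivially $M$-invariant, and a countable convex combination of Keisler measures is a Keisler measure. For smoothness, observe that $\delta_a|_M$ determines $\delta_a$ on $\mathcal{U}$: any extension must send $[\varphi(x,b)]$ to the forced value $\mathbf{1}[\mathcal{U} \models \varphi(a,b)]$ since $a \in M$. Uniqueness of extension is preserved under countable convex combinations, so $\mu$ is smooth; smoothness implies definability, so in particular $\mu$ is Borel-definable and $\mathbb{P}_\mu$ is well-defined.

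For the main claim, we may replace $N$ by $\{a \in N : r_a > 0\}$ and so assume $r_a > 0$ for every $a \in N$. The crucial algebraic fact is that the Morley product of realized types is realized, i.e.\ $\delta_a \otimes \delta_b = \delta_{(a,b)}$ for $a,b \in M$, which extends inductively so that $\mu^{(n)} = \sum_{(a_1,\dots,a_n) \in N^n} \left(\prod_{i=1}^n r_{a_i}\right) \delta_{(a_1,\dots,a_n)}$. Via the pushforward identification of Proposition~\ref{prop:random}, $\mathbb{P}_\mu$ therefore corresponds to the i.i.d.\ sampling measure that draws each $a_i$ from $N$ with probabilities $(r_a)_{a \in N}$.

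The second Borel-Cantelli lemma now finishes the job. For each fixed $a \in N$, the events $\{a_i = a\}$ are independent with common probability $r_a > 0$, so $\sum_i r_a = \infty$ and almost surely $a_i = a$ for infinitely many $i$; intersecting these full-measure events over the countable set $N$, almost surely $\{a_i : i \geq 1\} = N$. Since $\mathcal{L}$ is relational, the induced $\mathcal{L}$-structure on $\{a_i\}$ is literally the substructure $N \subseteq M$, so $p_{\mathcal{L}} \cong N$ holds $\mathbb{P}_\mu$-almost surely. The main bookkeeping subtlety is that $\mu$ concentrates on points, so $\mathbb{P}_\mu$-almost every $p$ lies outside $S^+_\mathbf{x}(\mathcal{U})$; but the definition of $p_\mathcal{L}$ via the induced structure on $\{a_i : \mathbf{a} \models p\}$ extends unambiguously to this case, and no other step of the argument is affected.
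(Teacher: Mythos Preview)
Your proof is correct and follows exactly the approach the paper indicates: the paper's own proof is just the sentence ``essentially Borel-Cantelli and is left to the reader as an exercise,'' and you have carried out precisely that. One small caveat: your reduction ``replace $N$ by $\{a \in N : r_a > 0\}$'' changes the target structure, so it does not literally yield $\mathbb{P}_\mu(B_N)=1$ for the original $N$ unless the restricted set is isomorphic to $N$; the statement should be read with the implicit hypothesis $r_a > 0$ for all $a \in N$, under which your argument goes through without that step.
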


\begin{proof} The proof is essentially Borel-Cantelli and is left to the reader as an exercise.
\end{proof}

We remark that sometimes one can prove some easy examples by hand, especially when they are constructed from a finite sum of types. In these cases, one can think of random generic types as paths through a finitely branching tree. At step $n+1$, we choose one of the finitely many types in the support of our measure and take the Morley product with the type we constructed at step $n$ (see Proposition \ref{prop:bigsupport}). However, not all finite sums of types yield a positive answer to Question \ref{question:main} (see Example \ref{example:bad}) and so a case-by-case analysis is necessary. We leave the following example as an exercise as well. Hint: Borel-Cantelli.   

\begin{proposition} Consider $(\mathbb{R},<) \prec (\mathcal{U},<)$. Consider the types $p_1,p_2, p_3,p_4\in S_{x}^{\inv}(\mathcal{U},\mathbb{R})$ such that 
\begin{enumerate}
    \item $p_1 \supseteq \{x > a : a \in \mathcal{U}\}$.
    \item $p_2 \supseteq \{x < a : a \in \mathcal{U}\}$. 
    \item $p_3 \supseteq \{x > a: a \in \mathbb{R}\} \cup \{x < a : \forall b \in \mathbb{R}, a > b\}$. 
    \item $p_4 \supseteq \{x < a: a \in \mathbb{R}\} \cup \{x > a : \forall b \in \mathbb{R}, a < b\}$. 
\end{enumerate}
Let $\mu_1 = \frac{1}{2}\delta_{p_1} + \frac{1}{2}\delta_{p_2}$ and $\mu_2 = \frac{1}{2}\delta_{p_3} + \frac{1}{2}\delta_{p_4}$. Let $N_1 = (\mathbb{Z}, <)$ and $N_2 = (\mathbb{N} + \mathbb{N}^{*},<)$. Then $\mathbb{P}_{\mu_1}(B_{N_1}) = 1$ while  $\mathbb{P}_{\mu_2}(B_{N_2}) = 1$.
\end{proposition}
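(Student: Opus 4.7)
The plan is to use Propositions~\ref{prop:random} and~\ref{prop:bigsupport}(3) to reduce each claim to a Bernoulli random-choice analysis, compute the order type produced by each sequence of choices, and then invoke Borel--Cantelli. Since each $\mu_i$ is a sum of two $M$-invariant types, part (3) of Proposition~\ref{prop:bigsupport} identifies $\supp(\mathbb{P}_{\mu_i})$ with the set of infinite Morley products $\bigotimes_{n \geq 1} q_n(x_n)$, where each $q_n$ ranges over the two supporting types. Combined with Proposition~\ref{prop:random}(2), the pushforward of $\mathbb{P}_{\mu_i}$ under the restriction map is the fair Bernoulli product on such sequences. Thus computing $\mathbb{P}_{\mu_i}(B_N)$ reduces to determining the order type of $\bigotimes_n q_n$ as a function of the sequence $(q_n)$ and then showing this order type equals that of $N$ for almost every sequence.

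For $\mu_1$, the key observation is that if $q_n = p_1$, then a realization $x_n$ of the Morley product lies above every previously realized $x_j$ for $j < n$, because $p_1 \supseteq \{x > a : a \in \mathcal{U}\}$ and the earlier $x_j$ sit inside $\mathcal{U}$ at the stage when $x_n$ is adjoined; symmetrically, if $q_n = p_2$ then $x_n$ lies below every previous $x_j$. A quick case check on pairs of indices then shows: the $p_1$-indexed values, listed in index order, form an ascending sequence; the $p_2$-indexed values form a descending one; and every $p_1$-indexed value exceeds every $p_2$-indexed value. On the Borel--Cantelli event (of $\prod\mu_1$-probability $1$) that both choices are made infinitely often, the total order type is $\omega^{*} + \omega \cong \mathbb{Z}$, giving $\mathbb{P}_{\mu_1}(B_{N_1}) = 1$.

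For $\mu_2$, the analysis is parallel but the orientations flip. Because $p_3 \supseteq \{x > a : a \in \mathbb{R}\} \cup \{x < a : a > \mathbb{R}\}$, each new $p_3$-sample sits strictly below every previously realized element of $\mathcal{U}$ that exceeds $\mathbb{R}$ -- in particular below every prior $p_3$-sample, and above every prior $p_4$-sample (which lies below $\mathbb{R}$). Symmetrically, each new $p_4$-sample lies above every prior $p_4$-sample and below every prior $p_3$-sample. Hence the $p_3$-indexed subsequence forms an $\omega^{*}$-chain, the $p_4$-indexed subsequence forms an $\omega$-chain, and the former block sits entirely above the latter. On the Borel--Cantelli full-probability event that both choices occur infinitely often, the total order type is $\omega + \omega^{*} = \mathbb{N} + \mathbb{N}^{*}$, yielding $\mathbb{P}_{\mu_2}(B_{N_2}) = 1$.

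The main obstacle is the direction-tracking in the $\mu_2$ case: since $p_3$ and $p_4$ are not the "$\pm\infty$" types but are pinned to particular sides of the cut at $\mathbb{R}$, one has to carefully verify that iterated $p_3$-samples strictly descend rather than collapse to a single level. This is exactly what the extra clause $\{x < a : a > \mathbb{R}\}$ (and its counterpart for $p_4$) enforces in the Morley product. Everything else is routine index bookkeeping together with one application of Borel--Cantelli.
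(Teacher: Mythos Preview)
Your proposal is correct and follows exactly the approach the paper intends: the paper leaves this proposition as an exercise with the single hint ``Borel--Cantelli,'' and your analysis is a faithful and accurate execution of that hint, correctly tracking the order relations forced by each Morley product and invoking Borel--Cantelli to ensure both supporting types are sampled infinitely often.
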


Our final example of this section is more general. We show that if a measure concentrates on finitely many types which pairwise commute (including with themselves), then we obtain a positive answer to Question \ref{question:main}. This happens, for example, if our measure $\mu$ is the sum of finitely many generically stable types. 

\begin{theorem} Suppose  $\mu = \sum_{i=1}^{n} r_i q_i$ where each type in $\{q_1,\ldots,q_n\}$ is $M$-invariant, $\sum_{i=1}^{n} r_i = 1$, and for each pair $1 \leq i, j \leq n$, we have that $q_i(x) \otimes q_j(y) = q_j(y) \otimes q_i(x)$. If $\mu$ has no realized part, then there exists an $\mathcal{L}$-structure $N$ such that $\mathbb{P}_{\mu}(B_{N}) = 1$. 
\end{theorem}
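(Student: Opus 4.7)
The plan is to identify the support of $\mathbb{P}_{\mu}$, restrict to a full-measure subset of it, and verify that all its elements induce isomorphic $\mathcal{L}$-structures. By Proposition~\ref{prop:bigsupport}(3) applied to $\mu = \sum_{i=1}^{n} r_i \delta_{q_i}$, every element of $\supp(\mathbb{P}_{\mu})$ has the form $q^{f} := \bigotimes_{i=1}^{\omega} q_{f(i)}(x_i)$ for some function $f \colon \mathbb{N} \to \{1,\ldots,n\}$, and since $\mu$ does not concentrate on points, Proposition~\ref{prop:concentrate} guarantees that any realization of $q^{f}$ has pairwise distinct coordinates.

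The first step is to isolate a natural almost-sure set of functions. Let $S := \{j \leq n : r_j > 0\}$. Using Proposition~\ref{prop:random} to push $\mathbb{P}_{\mu}$ forward to the product space $\prod S_x(\mathcal{U})$, one sees that the associated index $f(i)$ equals $j$ independently with probability $r_j$. A routine Borel--Cantelli argument (in the spirit of Proposition~\ref{prop:inf-often}) then shows that for $\mathbb{P}_{\mu}$-almost every $q^{f}$, the function $f$ takes each value $j \in S$ infinitely often and never takes a value outside $S$; I will call such $f$ \emph{good}.

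Fix any good $f_0$ once and for all, and set $N := (q^{f_0})_{\mathcal{L}}$. For any other good $f$, pick a bijection $\sigma \colon \mathbb{N} \to \mathbb{N}$ satisfying $f = f_0 \circ \sigma$; this exists because $f$ and $f_0$ partition $\mathbb{N}$ into infinitely many infinite fibers indexed by the same set $S$. Choose realizations $(a_i) \models q^{f}$ and $(b_i) \models q^{f_0}$, which are injective sequences by the previous paragraph. I claim that the map $a_i \mapsto b_{\sigma(i)}$ is an $\mathcal{L}$-isomorphism from $(q^{f})_{\mathcal{L}}$ onto $N$. For any $\mathcal{L}$-formula $\varphi(y_1,\ldots,y_k)$ and indices $i_1,\ldots,i_k$, the pairwise commutativity of the $q_j$'s combined with the associativity of the Morley product on invariant types implies that the type of $(a_{i_1},\ldots,a_{i_k})$ is the Morley product $\bigotimes_{l=1}^{k} q_{f(i_l)}(y_l)$, and does not depend on the order in which the factors are iterated. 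Since $f(i_l) = f_0(\sigma(i_l))$, the tuple $(b_{\sigma(i_1)},\ldots,b_{\sigma(i_k)})$ realizes the same type, so the two tuples satisfy the same $\mathcal{L}$-formulas. Combining the two steps yields $\mathbb{P}_{\mu}(B_N) = 1$.

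The main subtle point I expect is in the last paragraph: one must carefully verify that the stated hypothesis --- pairwise commutation $q_i(x) \otimes q_j(y) = q_j(y) \otimes q_i(x)$ at the level of two-variable types --- actually lifts to full order-independence of arbitrary finite iterated Morley products with distinguished variables $y_1,\ldots,y_k$ (propagated by adjacent transpositions, each swap being justified over the expanded base of parameters coming from the other factors, using invariance). Once that is in hand, the rest of the argument is bookkeeping.
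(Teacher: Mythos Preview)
Your proposal is correct and follows essentially the same strategy as the paper's proof: identify the support via Proposition~\ref{prop:bigsupport}(3), pass to the full-measure set where each $q_j$ with $r_j>0$ occurs as a coordinate infinitely often (Borel--Cantelli), and then use pairwise commutativity of the $q_j$'s to show that any two such types induce isomorphic $\mathcal{L}$-structures.

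The only cosmetic difference is in the final step. The paper builds the isomorphism by a back-and-forth, invoking at each stage the identity $\tp(a_{i_1},\ldots,a_{i_n},a_l/\mathcal{U}) = \tp(a_{i_1},\ldots,a_{i_n}/\mathcal{U}) \otimes \tp(a_l/\mathcal{U})$ (which is exactly the lift of pairwise commutation you flag as the subtle point). You instead write down the bijection $\sigma$ globally by matching fibers of $f$ and $f_0$ over $S$, and then verify in one shot that finite subtuples have the same type. Both arguments reduce to the same fact---that iterated Morley products of the $q_j$'s are invariant under permutation of the factors---so there is no real gain or loss either way. Your remark that one must propagate the two-variable commutation hypothesis via adjacent transpositions over enlarged parameter sets is precisely the justification that the paper leaves implicit. (Minor slip: ``infinitely many infinite fibers'' should read ``$|S|$-many infinite fibers''.)
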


\begin{proof} By Proposition \ref{prop:bigsupport}, every element of $\supp(\mathbb{P}_{\mu})$ is an iterated Morley product of elements in $\{q_1,\ldots,q_n\}$. For each $k \leq n$ we let $W_{k} \coloneqq \{ p \in S_{\mathbf{x}}(\mathcal{U}) : \exists^{\infty} j, p|_{x_j} = q_k(x_j)\}$. By Borel-Cantelli, we claim that $\mathbb{P}_{\mu}(W_{k}) = 1$. Let $W = \bigcap_{k=1}^{n} W_k$. Now $\mathbb{P}_{\mu}(W) = 1$. We claim that if $p, s \in W \cap \supp(\mathbb{P}_{\mu})$,
then $p|_{\mathcal{L}} \cong s|_{\mathcal{L}}$. Indeed, suppose that $(a_1,a_2,\ldots) \models p|_{\mathcal{L}}$ and $(b_1,b_2,\ldots) \models s|_{\mathcal{L}}$. Notice that since $\mu$ has no realized part, we have that $a_i = a_j$ if and only if $i = j$ by Proposition \ref{prop:concentrate}. Similarly for the $b_i$'s. We prove they are isomorphism using a back-and-forth method.  
\begin{enumerate}
    \item For $a_1$, choose the first $b_t$ such that $\tp(a_1/\mathcal{U}) = \tp(b_t/\mathcal{U})$. 
    \item Now suppose we have a map $f:\{a_{i_1},\ldots,a_{i_m}\} \to \{b_{j_1},\ldots,b_{j_m}\}$. Let $l$ be the smallest index which has not yet appeared in $\{i_1,\ldots,i_m\}$. By commutativity, we have that $\tp(a_{i_1},\ldots,a_{i_m},a_{l}/\mathcal{U}) = \tp(a_{i_1},\ldots,a_{i_m}/\mathcal{U}) \otimes \tp(a_{l}/\mathcal{U})$. By our induction hypothesis, we have that $\tp(a_{i_1},\ldots,a_{i_m}/\mathcal{U}) = \tp(b_{j_1},\ldots,b_{j_m}/\mathcal{U})$. Now choose the smallest index $t$ such that $\tp(b_t/\mathcal{U}) = \tp(a_{l}/\mathcal{U})$. Then, 
    \begin{align*}
    \tp(a_{i_1},\ldots,a_{i_m},a_{l}/\mathcal{U}) &= \tp(a_{i_1},\ldots,a_{i_m}/\mathcal{U}) \otimes \tp(a_{l}/\mathcal{U}) \\
        &= \tp(b_{j_1},\ldots,b_{j_m}/\mathcal{U}) \otimes \tp(b_{t}/\mathcal{U}) \\
        &= \tp(b_{j_1},\ldots,b_{j_m},b_{t}/\mathcal{U}). \\
    \end{align*}
    Let $f(a_{l}) = b_t$.
    \item A similar argument allows us to \emph{go back}. 
\end{enumerate}
We claim that $f$ is an isomorphism. Thus we can just let $N$ be the isomorphism type of any type $p \in \supp(\mathbb{P}_{\mu}) \cap W$. 
\end{proof}

\subsection{The measure extension axioms} Moving on, we now aim to prove the main theorem of this section. We prove that if a measure $\mu$ witnesses the \emph{\strong\ measure extension axiom}, then there exists some $\mathcal{L}$-structure $N$ such that $\mathbb{P}_{\mu}(B_{N}) = 1$. 

An intuitive explanation behind the extension axioms is the following: Suppose, with positive probability, one can generically sample a certain $(n+1)$-quantifier-free type, say $q(x_1,\ldots,x_{n+1})$. We let $q'(x_1,\ldots,x_{n})$ be the truncation of the quantifier-free type $q$ to the first $n$ variables. An adequate measure $\mu$ witnesses the \emph{adequate extension axiom} if whenever we generically sample $n$ points which witness $q'$, then with positive probability, we can sample a new point such that the entire tuple witnesses $q$. If $\mu$ is also excellent, we say that it satisfies the \emph{excellent extension axiom}.   

\begin{definition} Suppose that $\mathcal{L}$ is a finite relational language. If $\bar{a}\coloneqq a_1,\ldots,a_n \in \mathcal{U}$,
we let $\diam_{\bar{a}}(x_1,\ldots,x_n)$ be the quantifier-free diagram of the tuple $(a_1,\ldots,a_n)$. If $q$ is a complete quantifier-free type in variables $x_1,\ldots,x_n$ over $\emptyset$, then we let $\diam_{q}(x_1,\ldots,x_n) = \diam_{\bar{a}}(x_1,\ldots,x_n)$ where $\bar{a} \models q$. 
\end{definition}

\begin{definition} Fix $\mu \in \mathfrak{M}_{x}^{\inv}(\mathcal{U},M)$. We say that $\mu$ satisfies the \emph{\weak\ measure extension axiom} if $\mu$ is $M$-\weak\ and for any $a_1,\ldots,a_{n+1} \in \mathcal{U}$, if $\bar{a} = a_1,\ldots,a_n$ and 
\begin{equation*}
\mu_{x_{n+1}} \otimes \mu^{(n)}_{x_1,\ldots,x_n}(\diam_{\bar{a},a_{n+1}}(\bar{x},x_{n+1})) > 0,
\end{equation*}
then 
\begin{equation*}
    \mu^{(n)}_{x_1,\ldots,x_n}|_{M} \left( [\diam_{\bar{a}}(x_1,\ldots,x_n)] \cap \left(F_{\mu_{x_{n+1}},M}^{\varphi}\right)^{-1}(\{1\}) \right) = 0. 
\end{equation*}
where $\varphi(x_{n+1};x_1,\ldots,x_n) \coloneqq \neg \diam_{\bar{a},a_{n+1}}(x_1,\ldots,x_n,x_{n+1})$. Since the variables are a little tricky, we recall that for any $q \in S_{x_1,\ldots,x_n}(M)$, we have that 
\begin{equation*}
    F_{\mu_{x_{n+1}},M}^{\varphi}(q) = \mu_{x_{n+1}}(\neg \diam_{\bar{a}}(c_1,\ldots,c_n,x_{n+1})).
\end{equation*}
where $(c_1,\ldots,c_n) \models q$. 

In other words: It says that if some quantifier-free diagram of size $n+1$ has positive measure, then the measure of types extending the $n$-type that fail to extend to the full $n+1$ diagram is zero.

We say that $\mu$ satisfies the \emph{\strong\ measure extension axiom} if $\mu$ is $M$-\strong\ and $\mu$ satisfies the \weak\ measure extension axiom. 
\end{definition}

\begin{example} We recall our two main examples from Example \ref{example:main}. We claim that both witness the \strong\ measure extension axiom. We only sketch these arguments since these examples will be worked with rigorously in the next section.
\begin{enumerate} 
    \item First we consider Example \ref{example:main}(1) with measure $\mu_{L}$. We claim that $\mu_{L}$ satisfies the \strong\ measure extension axiom. Indeed, notice that a quantifier-free type in variables $x_1,\ldots,x_n$ has positive measure if and only if there is no equality between the variables, i.e.\ we have that 
    \begin{align*}
        \mu_{L}^{(n+1)}&(\diam_{a_1,\ldots,a_{n+1}}(x_1,\ldots,x_{n+1})) > 0 \\  &\mathrel{\Longleftrightarrow} {\models} \bigvee_{\sigma \in \Sym(n+1)} a_{\sigma(1)} < \ldots < a_{\sigma(n+1)}. 
    \end{align*}
    The cases are more or less similar, so we only consider the following quantifier-free type over the empty set, $q(x_1,\ldots,x_n,x_{n+1}) \coloneqq x_1 < \ldots < x_{n} < x_{n+1}$. Notice that for any $s \in [\diam_{q}(x_1,\ldots,x_n)] \cap S_{\bar{x}}(M)$ and $(c_1,\ldots,c_n) \models s$, 
\begin{align*}
    &F_{\mu_{x_{n+1}}}^{\varphi}(q) = 1 \\
    &\Longleftrightarrow \mu_{x_{n+1}}(\neg\diam_{s}(c_1,\ldots,c_n,x_{n+1})) = 1 \\
    &\Longleftrightarrow \mu_{x_{n+1}}(\diam_{s}(c_1,\ldots,c_n,x_{n+1})) = 0\\
    &\Longleftrightarrow \mu_{x_{n+1}}( \diam_{s|_{x_n,x_{n+1}}}(c_n,x_{n+1})) =0 \\
    &\Longleftrightarrow \mu_{x_{n+1}}(c_n < x_{n+1}) = 0 \\
    &\Longleftrightarrow \st(c_n) \geq 1. 
\end{align*}
where $\st(c_n)$ is the standard part of $c_n$. Therefore 
\begin{align*}
    [\diam_{a_1 < \ldots < a_n}(x_1,\ldots,x_n)] \cap& \left( F_{\mu_{n+1},M}^{\varphi} \right)^{-1}(\{1\}) \\ &= \{p \in S_{\bar{x}}(M) : \st(x_n) \geq 1\}.
\end{align*}
Now, 
\begin{equation*}
    \{p \in S_{\bar{x}}(M) : \st(x_n) \geq 1\} \cap \supp(\mu^{(n)}|_{M}) \neq \emptyset, 
\end{equation*}
However, this set has $\mu^{(n)}|_{M}$-measure 0.
 \item Now we consider Example \ref{example:main}(2). It is relatively straightforward to see that $\mu_{t}$ satisfies the \strong\ extension axiom. In this case, any quantifier-free type (of positive measure) in $n$ variables can always be extended to any quantifier-free type (of positive measure) in $n+1$ variables as long as the restriction of the larger type is the smaller one. In this example, the term \emph{almost all} could have been replaced with \emph{all} in the extension axiom.
\end{enumerate}
\end{example}

We will now prove a general computational lemma. We will apply it to measures which satisfy the extension axioms. We will need the following fact. 

\begin{fact}\label{fact:converge} Let $X$ be a topological space, $\mu$ be a Borel probability measure on $X$, and $f:X \to [0,1]$ a Borel function. If $\mu(\{x \in X: f(x) = 1\}) =0$, then 
\begin{equation*}
    \lim_{k \to \infty} \int_{X} f^{k} d\mu = 0. 
\end{equation*}
\end{fact}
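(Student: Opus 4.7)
The plan is to apply the dominated convergence theorem. First, I would observe that the sequence of functions $(f^k)_{k \geq 1}$ is dominated by the constant function $1$, which is integrable with respect to $\mu$ since $\mu$ is a probability measure on $X$ (so $\int_X 1 \, d\mu = 1 < \infty$). This provides the integrable dominator required by DCT.

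Next, I would analyze the pointwise limit of $f^k$. For any $x \in X$, either $f(x) \in [0,1)$, in which case $f^k(x) \to 0$ as $k \to \infty$, or $f(x) = 1$, in which case $f^k(x) = 1$ for all $k$. Let $E = \{x \in X : f(x) = 1\}$; by hypothesis $\mu(E) = 0$. Therefore $f^k \to 0$ pointwise on $X \setminus E$, i.e.\ $\mu$-almost everywhere.

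Combining these two ingredients, the dominated convergence theorem yields
\begin{equation*}
    \lim_{k \to \infty} \int_X f^k \, d\mu = \int_X \lim_{k \to \infty} f^k \, d\mu = \int_X 0 \, d\mu = 0,
\end{equation*}
which is the desired conclusion. There is no real obstacle here — the argument is entirely routine measure theory, and the only substantive input is the hypothesis $\mu(\{f = 1\}) = 0$, which is precisely what guarantees the almost-everywhere pointwise convergence needed to feed into DCT. An alternative one could sketch in a sentence is a direct approach via $\int f^k \, d\mu \leq \mu(\{f \geq 1-\varepsilon\}) + (1-\varepsilon)^k$ for any $\varepsilon > 0$, letting $k \to \infty$ and then $\varepsilon \to 0$, using continuity of measure on the nested sets $\{f \geq 1 - \varepsilon\} \downarrow \{f = 1\}$ as $\varepsilon \downarrow 0$; but DCT is the cleanest route.
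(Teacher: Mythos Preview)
Your proof is correct and is essentially the same as the paper's: both apply the dominated convergence theorem to the sequence $f^k$, using the constant function $1$ as dominator. The only cosmetic difference is that the paper identifies the pointwise limit as $\mathbf{1}_{\{f=1\}}$ everywhere and then integrates to get $\mu(\{f=1\})=0$, whereas you use the hypothesis upfront to say the limit is $0$ almost everywhere; these are the same argument.
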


\begin{proof} Notice the sequence $(g_n)_{n \geq 1}$ where $g_n = f^{n}$ is a uniformly bounded sequence of functions which converges pointwise to $\mathbf{1}_{f(x)=1}$. By the dominated convergence theorem, 
\begin{equation*}
    \lim_{n \to \infty} \int_{X} g_{n} d\mu = \int_{X} \mathbf{1}_{f(x) = 1}d\mu = \mu(\{x \in X: f(x) = 1\}) = 0. \qedhere
\end{equation*}
\end{proof}

\begin{lemma}\label{lemma:ext} Suppose $\mu \in \mathfrak{M}^{\inv}_{x}(\mathcal{U},M)$ and $\mu$ is $M$-\weak. Let $\Delta_{n+1} \coloneqq \{\varphi_1,\varphi_2,\ldots,\varphi_{s}\}$ be a finite collection of $\mathcal{L}(M)$-formulas with variables among $x_1,\ldots,x_n,y$. Let $p(x_1,\ldots,x_n,y) \coloneqq \bigwedge_{\varphi \in \Delta_{n+1}}\varphi$. Let $\Delta_{n}$ be the subcollection of formulas from $\Delta_{n+1}$ which contain variables only among $x_1,\ldots,x_n$. We let $r_{p}(x_1,\ldots,x_n) \coloneqq \bigwedge_{\varphi \in \Delta_n} \varphi$. 

Suppose that 
\begin{enumerate}
    \item $\mu_{y} \otimes \mu^{(n)}_{\bar{x}}(p(x_1,\ldots,x_n,y)) > 0$. 
    \item For $\mu^{(n)}$-almost all $q \in S_{\bar{x}}(M) \cap [r_{p}(\bar{x})]$, if $\bar{c} \models q$ then $\mu(p(\bar{c},y)) > 0$. In other words, 
    \begin{equation*}
        \mu^{(n)}|_{M} \left([r_p(\bar{x})] \cap \left( F_{\mu}^{\neg p} \right)^{-1}(\{1\}) \right) = 0.
    \end{equation*}
\end{enumerate}
 Then for any distinct indices $i_1 < \ldots < i_n$ and $t \geq 1$, we have that 
\begin{equation*}
    \mathbb{P}_{\mu} \left( \bigcup_{l > t} [r_{p}(x_{i_1},\ldots,x_{i_n}) \to p(x_{i_1},\ldots,x_{i_n},x_l)]\right) = 1.
\end{equation*} 
Additionally, if $\mu$ is $M$-\strong\ then for any pairwise distinct indices $j_1,\ldots,j_n$ and $t \geq 1$, we have that 
\begin{equation*}
    \mathbb{P}_{\mu} \left( \bigcup_{l  > t} [r_{p}(x_{j_1},\ldots,x_{j_n}) \to p(x_{j_1},\ldots,x_{j_n},x_l)]\right) = 1.
\end{equation*} 
\end{lemma}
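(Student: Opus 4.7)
The plan is to show that the complement event is $\mathbb{P}_\mu$-null. Taking the set-theoretic complement,
\[
S_{\mathbf{x}}(\mathcal{U})\setminus\bigcup_{l>t}\bigl[r_p(\bar{x}_i)\to p(\bar{x}_i,x_l)\bigr]\;=\;\bigcap_{l>t}\bigl[r_p(\bar{x}_i)\wedge\neg p(\bar{x}_i,x_l)\bigr],
\]
so it suffices to prove the intersection on the right has measure zero. Replacing $t$ by $\max(t,i_n)$ only shrinks the union (hence enlarges the complement), so I may assume without loss of generality that $l>t$ forces $l>i_n$, in particular $l\notin\{i_1,\dots,i_n\}$.

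By upper continuity, the measure of the intersection equals $\lim_{K\to\infty}\mathbb{P}_\mu\bigl([r_p(\bar{x}_i)]\cap\bigcap_{t<l\le K}[\neg p(\bar{x}_i,x_l)]\bigr)$. The main step is to apply Lemma \ref{label:conditional} with block $\bar{x}=(x_{i_1},\dots,x_{i_n})$ and side-formulas $\varphi_l(\bar{x},x_l)\coloneqq\neg p(\bar{x},x_l)$ for $t<l\le K$, each with singleton $\bar{z}_l=(x_l)$. The index-ordering hypotheses (3) and (4) of that lemma hold thanks to the reduction above. The extra conjunct $r_p(\bar{x})$ depends only on $\bar{x}$, so it is absorbed into the domain of integration, yielding
\[
\mathbb{P}_\mu\Bigl([r_p(\bar{x}_i)]\cap\bigcap_{t<l\le K}[\neg p(\bar{x}_i,x_l)]\Bigr)\;=\;\int_{[r_p(\bar{x})]}\prod_{t<l\le K}F^{\neg p}_{\mu_{x_l}}\,d\mu^{(n)}|_{M'}.
\]
Each $F^{\neg p}_{\mu_{x_l}}$ is the same Borel function on $S_{\bar{x}}(M')$ (only the name of the dummy integration variable differs), so the product collapses to $(F^{\neg p}_\mu)^{K-t}$.

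It then remains to invoke Fact \ref{fact:converge}. By hypothesis (2), the set $\{q\in[r_p(\bar{x})]:F^{\neg p}_\mu(q)=1\}=\{q\in[r_p]:\mu(p(\bar{c},y))=0\}$ is $\mu^{(n)}|_{M'}$-null, so Fact \ref{fact:converge} (applied to the restriction of $\mu^{(n)}|_{M'}$ to $[r_p(\bar{x})]$) gives $\lim_K\int_{[r_p]}(F^{\neg p}_\mu)^{K-t}\,d\mu^{(n)}|_{M'}=0$. This finishes the adequate case. The only real obstacle is the bookkeeping needed to verify that the integrand from Lemma \ref{label:conditional} actually collapses to a power of a single function; once that is observed, the substantive content is just recognising that hypothesis (2) is exactly the null-set hypothesis required by Fact \ref{fact:converge}.

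For the excellent strengthening, the plan is to reduce to the adequate case via self-commutativity of $\mu$. Given pairwise distinct $j_1,\dots,j_n$ and $l>\max(t,j_1,\dots,j_n)$, Fact \ref{fact:easy} together with self-commutativity makes both the integrand $\prod F^{\neg p}_{\mu_{x_l}}$ and the integrating measure $\mu^{(n)}|_{M'}$ invariant under permutation of the indices $j_1,\dots,j_n$, so the identical integral identity holds and the same limiting argument applies verbatim after relabelling; no further ingredient is required.
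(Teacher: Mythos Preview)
Your proof is correct and follows essentially the same approach as the paper: pass to the complement, restrict to indices $l$ exceeding $\max\{t,i_1,\dots,i_n\}$, apply Lemma~\ref{label:conditional} to express the finite intersection as an integral over $S_{\bar{x}}(M)$, collapse the product of fiber maps to a single power $(F_\mu^{\neg p})^{K-t}$, and conclude via Fact~\ref{fact:converge} using hypothesis~(2). The only cosmetic difference is that the paper packages $r_p$ into each side-formula $\theta_l \coloneqq r_p \wedge \neg p$ and then computes $F_{\mu_l}^{\theta_l} = \mathbf{1}_{r_p}\cdot F_{\mu_l}^{\neg p}$, whereas you absorb $r_p$ directly into the domain of integration; these are equivalent. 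Your excellent-case reduction via Fact~\ref{fact:easy} likewise matches the paper's argument.
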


\begin{proof} Fix variables $\bar{x}\coloneqq x_{i_1},\ldots,x_{i_n}$. Let $ m= \max\{i_1,\ldots,i_n,t\}$. For any $l \in \mathbb{N}$, we let
\begin{enumerate}
    \item $\theta_{l}(x_{l};x_1,\ldots,x_n) = r_{p}(x_1,\ldots,x_n) \wedge \neg p(x_1,\ldots,x_n,x_l)$. 
    \item $\psi_{l}(x_{l};x_1,\ldots,x_n) = \neg p(x_1,\ldots,x_n,x_{l})$. 
\end{enumerate}
Now we compute: 
\begin{align*}
&\mathbb{P}_{\mu}\left( \bigcup_{l  > t} \left[r_{p}(x_{i_1},\ldots,x_{i_n}) \to p(x_{i_1},\ldots,x_{i_n},x_l)\right]\right) \\
&\geq  \lim_{\substack{k \to \infty \\ k > m}} \mathbb{P}_{\mu} \left( \left[r_{p}(x_{i_1},\ldots,x_{i_n}) \to \bigvee_{l > m}^{k} p(x_{i_1},\ldots,x_{i_n},x_l)\right]\right)\\
&=1 -  \lim_{\substack{k \to \infty \\ k > m}} \mathbb{P}_{\mu} \left( \left[r_{p}(x_{i_1},\ldots,x_{i_n}) \wedge \bigwedge_{l > m}^{k} \neg p(x_{i_1},\ldots,x_{i_n},x_l)\right]\right) \\
&\overset{(a)}{=}1 -  \lim_{\substack{k \to \infty \\ k > m}} \mathbb{P}_{\mu} \left( \left[r_{p}(x_{1},\ldots,x_{n}) \wedge \bigwedge_{l > m}^{k} \neg p(x_{1},\ldots,x_{n},x_l)\right]\right) \\
&=1 -  \lim_{\substack{k \to \infty \\ k > m}} \mathbb{P}_{\mu} \left( \left[\bigwedge_{l > m}^{k} r_{p}(x_{1},\ldots,x_{n}) \wedge \neg p(x_{1},\ldots,x_{n},x_l)\right]\right) \\
&\overset{(b)}{=} 1 -  \lim_{\substack{k \to \infty \\ k > m}} \int_{ S_{\bar{x}}(M)}\prod_{l > m}^{k} F_{\mu_{l}}^{\theta_{l}} d\mu^{(n)}|_{M} \\
&\overset{(c)}{=} 1 -  \lim_{\substack{k \to \infty \\ k > m}} \int_{ S_{\bar{x}}(M)} \prod_{l > m}^{k} \left(  \mathbf{1}_{r_{p}(\bar{x})} \cdot F_{\mu_{l}}^{\psi_l} \right) d \mu^{(n)}|_{M} \\
 &\overset{(d)}{=}1 -  \lim_{\substack{k \to \infty \\ k > m}} \int_{ S_{\bar{x}}(M)} \left( \mathbf{1}_{r_{p}(\bar{x})}  \cdot F_{\mu}^{\psi} \right)^{k-m} d\mu^{(n)}|_{M}\\
 &\overset{(e)}{=}1 -  0 = 1
\end{align*}
We provide the following justifications: 
\begin{enumerate}[$(a)$]
    \item Renaming variables for clarity, see Fact \ref{fact:easy}. 
    \item Lemma \ref{label:conditional}.
    \item Direct computation. 
    \item Renaming variables, i.e.\ let $\psi(x;x_1,\ldots,x_n) = \neg p(x_1,\ldots,x_n,x)$. 
    \item Notice that 
        \begin{equation*}
            \mu^{(n)}|_{M} \left( \{ q \in S_{\bar{x}}(M) : \left(\mathbf{1}_{r_{p}(\bar{x})} \cdot F_{\mu}^{\psi}\right)(q) = 1 \} \right) =0
        \end{equation*}
        by condition (2). Hence we may apply Fact \ref{fact:converge}.
\end{enumerate}
We remark that the \emph{additionally} claim follows from the fact that if $\mu$ is $M$-\strong\ then the ordering of the variables does not matter. More explicitly, we have 
\begin{align*}
&\mathbb{P}_{\mu} \left( \bigcup_{l  > t} \left[r_{p}(x_{j_1},\ldots,x_{j_n}) \to p(x_{j_1},\ldots,x_{j_n},x_l) \right]\right) \\
&= \lim_{\substack{k \to \infty \\ k > t}} \mathbb{P}_{\mu} \left( \left[\bigvee_{l >t}^{k} r_{p}(x_{j_1},\ldots,x_{j_n}) \to p(x_{j_1},\ldots,x_{j_n},x_l) \right]  \right)\\
&\overset{(*)}{=} \lim_{\substack{k \to \infty \\ k > t}} \mathbb{P}_{\mu} \left( \left[\bigvee_{l >t}^{k} r_{p}(x_{1},\ldots,x_{n}) \to p(x_{1},\ldots,x_{n},x_l) \right]  \right)\\
&=\mathbb{P}_{\mu} \left( \bigcup_{l  > t} \left[r_{p}(x_{1},\ldots,x_{n}) \to p(x_{1},\ldots,x_{n},x_l)\right]\right) =1.
\end{align*}
Equation $(*)$ is an application of Fact \ref{fact:easy}. The last equality follows from the first portion of this lemma. 
\end{proof}

\begin{lemma}\label{lemma:ext2} Suppose $\mu \in \mathfrak{M}_{x}^{\inv}(\mathcal{U},M)$ and $\mu$ satisfies the \weak\ measure extension axiom. Then for any tuple $a_1,\ldots,a_{n+1} \in \mathcal{U}$ such that $\mu^{(n+1)}(\diam_{\bar{a},a_{n+1}}(\bar{x},x_{n+1})) >0$, $t \geq 1$, and increasing indices $i_1 < \ldots < i_n$, we have that 
\begin{equation*}
    \mathbb{P}_{\mu} \left(\bigcup_{l > t} [\diam_{\bar{a}}(x_{i_1},\ldots,x_{i_n}) \to \diam_{\bar{a},a_{n+1}}(x_{i_1},\ldots,x_{i_n},x_{l})]  \right) = 1.
\end{equation*}
If additionally $\mu$ satisfies the \strong\ measure extension axiom then for any sequence $j_1,\ldots,j_n$ of distinct indices (not necessarily increasing) we have that 
\begin{equation*}
        \mathbb{P}_{\mu} \left(\bigcup_{l > t} [\diam_{\bar{a}}(x_{j_1},\ldots,x_{j_n}) \to \diam_{\bar{a},a_{n+1}}(x_{j_1},\ldots,x_{j_n},x_{l})]  \right) = 1.
\end{equation*}
\end{lemma}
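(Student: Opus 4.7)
The plan is to recognize Lemma \ref{lemma:ext2} as a direct specialization of Lemma \ref{lemma:ext}, obtained by taking the defining family of formulas to be the literals of a quantifier-free diagram. First I would set $p(x_1,\ldots,x_n,y) \coloneqq \diam_{\bar{a},a_{n+1}}(x_1,\ldots,x_n,y)$, viewed as the conjunction of its finitely many literals $\Delta_{n+1}$. Because $\mathcal{L}$ is a relational language, $\diam_{\bar{a},a_{n+1}}$ is a quantifier-free $\mathcal{L}$-formula over $\emptyset$ (hence certainly an $\mathcal{L}(M)$-formula, so no parameter issue arises). The subcollection $\Delta_{n}$ of literals in $\Delta_{n+1}$ not mentioning $y$ is, by inspection, exactly the diagram of the initial tuple, so $r_{p}(x_1,\ldots,x_n) = \diam_{\bar{a}}(x_1,\ldots,x_n)$.

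Next I would check the two hypotheses of Lemma \ref{lemma:ext}. For condition (1), the definition $\mu^{(n+1)}(x_1,\ldots,x_{n+1}) = \mu_{x_{n+1}} \otimes \mu^{(n)}(x_1,\ldots,x_n)$ immediately translates the standing assumption $\mu^{(n+1)}(\diam_{\bar{a},a_{n+1}}(\bar{x},x_{n+1})) > 0$ into $\mu_y \otimes \mu^{(n)}_{\bar{x}}(p(\bar{x},y)) > 0$ (taking $y = x_{n+1}$). For condition (2), with $\varphi(x_{n+1};\bar{x}) \coloneqq \neg\diam_{\bar{a},a_{n+1}}(\bar{x},x_{n+1}) = \neg p(\bar{x},x_{n+1})$, the conclusion of the \weak\ measure extension axiom reads
\begin{equation*}
    \mu^{(n)}|_{M}\left( [\diam_{\bar{a}}(\bar{x})] \cap (F_{\mu_{x_{n+1}},M}^{\varphi})^{-1}(\{1\}) \right) = 0,
\end{equation*}
which is precisely condition (2) of Lemma \ref{lemma:ext} with $\neg p$ in place of the generic formula. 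Applying the first conclusion of Lemma \ref{lemma:ext} with the increasing indices $i_1 < \cdots < i_n$ and threshold $t$ yields the first displayed equality.

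For the \strong\ part, note that the \strong\ measure extension axiom assumes $\mu$ is $M$-\strong, so the ``additionally'' clause of Lemma \ref{lemma:ext} applies verbatim with the distinct (but not necessarily increasing) indices $j_1,\ldots,j_n$, giving the second displayed equality. The main obstacle I anticipate is essentially bookkeeping: making sure that the bijection between the hypotheses/conclusions of the \weak\ measure extension axiom and hypotheses~(1)--(2) of Lemma \ref{lemma:ext} is stated explicitly so the reader does not have to reconcile the variable $y$ with $x_{n+1}$ and the parameter $a_{n+1}$ on the fly. No new estimates or computations are needed beyond what is already packaged in Lemma \ref{lemma:ext}.
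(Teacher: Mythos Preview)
Your proposal is correct and matches the paper's own proof, which simply reads ``Follows directly from Lemma~\ref{lemma:ext}.'' You have supplied exactly the bookkeeping the paper omits: identifying $p$ with the quantifier-free diagram, checking that $r_p = \diam_{\bar a}$, and matching the two hypotheses of Lemma~\ref{lemma:ext} with the assumption $\mu^{(n+1)}(\diam_{\bar a,a_{n+1}})>0$ and the conclusion of the \weak\ measure extension axiom respectively.
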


\begin{proof} Follows directly from Lemma \ref{lemma:ext}.
\end{proof}

\begin{theorem}\label{theorem:main} Suppose that $\mathcal{L}$ is a finite relational language. Let $\mu \in \mathfrak{M}_{x}^{\inv}(\mathcal{U},M)$ and suppose that $\mu$ satisfies the \strong\ measure extension axiom. Then there exists a countable $\mathcal{L}$-structure $N$ such that $\mathbb{P}_{\mu}(B_N) = 1$. 
\end{theorem}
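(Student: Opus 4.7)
The plan is to identify a $\mathbb{P}_\mu$-measure-one event $\mathcal{A}\subseteq S_\mathbf{x}(\mathcal{U})$ on which the induced structure is determined up to isomorphism, and then take $N$ to be this common isomorphism type. Measurability of $B_N$ follows from Proposition \ref{prop:borel}. The whole argument is powered by countably many applications of Lemma \ref{lemma:ext2} together with a back-and-forth.

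First I would set up the relevant countable bookkeeping. Since $\mathcal{L}$ is finite relational, for each $n$ the collection $Q_n$ of complete quantifier-free $\mathcal{L}$-types $q(x_1,\dots,x_n)$ with $\mu^{(n)}(\diam_q)>0$ is finite. Let $\mathcal{A}_0$ be the event that every finite tuple of distinct indices in $p$ realizes a qft lying in $Q:=\bigcup_n Q_n$; the complement is a countable union of events $[\diam_q(x_{i_1},\dots,x_{i_n})]$ with $\mu^{(n)}(\diam_q)=0$, each of $\mathbb{P}_\mu$-measure zero by Fact \ref{fact:easy} (invoking self-commuting so the index ordering is immaterial). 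Next, for each pair $q_n\in Q_n$, $q_{n+1}\in Q_{n+1}$ with $q_{n+1}\supseteq q_n$, each tuple $\vec j=(j_1,\dots,j_n)$ of pairwise distinct positive integers, and each threshold $t\ge 1$, let
\[
E_{q_n,q_{n+1},\vec j,t}:=\Bigl\{p : \diam_{q_n}(x_{j_1},\dots,x_{j_n})\in p \Rightarrow (\exists l>t)\,\diam_{q_{n+1}}(x_{j_1},\dots,x_{j_n},x_l)\in p\Bigr\}.
\]
By the ``additionally'' clause of Lemma \ref{lemma:ext2}, $\mathbb{P}_\mu(E_{q_n,q_{n+1},\vec j,t})=1$. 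The indexing family is countable, so taking the intersection with $\mathcal{A}_0$ and with the ``all variables distinct'' event from Proposition \ref{prop:concentrate} yields a measure-one event $\mathcal{A}$.

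The heart of the proof is then a back-and-forth on any two types $p,p'\in\mathcal{A}$. I would construct inductively a bijection witnessing $p_\mathcal{L}\cong p'_\mathcal{L}$: given a partial match sending $(x_{i_1},\dots,x_{i_n})$ on the $p$-side to $(x_{j_1},\dots,x_{j_n})$ on the $p'$-side of common qft $q_n\in Q_n$, the forth step picks the smallest index $i_{n+1}$ on the $p$-side not yet in the domain, reads off the extended qft $q_{n+1}\in Q_{n+1}$ realized by $(x_{i_1},\dots,x_{i_{n+1}})$ in $p$ (positive-measure thanks to $\mathcal{A}_0$), and then uses membership of $p'$ in $E_{q_n,q_{n+1},\vec j,t}$ with $t$ larger than every previously used index on either side to produce a \emph{fresh} $l$ so that $(x_{j_1},\dots,x_{j_n},x_l)$ realizes $q_{n+1}$ in $p'$. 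The back steps are symmetric. Setting $N:=p_\mathcal{L}$ for any fixed $p\in\mathcal{A}$ gives $\mathcal{A}\subseteq B_N$, hence $\mathbb{P}_\mu(B_N)=1$.

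The main obstacle I anticipate is that during the back-and-forth the tuple $\vec j$ is never forced to be increasing, so the merely-adequate form of Lemma \ref{lemma:ext2} is not enough; one genuinely needs the stronger conclusion holding for arbitrary orderings, which is exactly what the self-commuting (i.e.\ \strong{}) hypothesis provides. This is the reason the theorem assumes excellent rather than just adequate. The threshold $t$ in the extension events is the technical device that keeps each chosen $l$ fresh and so turns the back-and-forth into a bijection rather than a mere partial isomorphism.
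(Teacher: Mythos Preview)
Your proposal is correct and follows essentially the same approach as the paper: build a countable intersection of measure-one extension events via Lemma~\ref{lemma:ext2}, then run a back-and-forth on any two types in that set. The paper uses $\supp(\mathbb{P}_\mu)$ where you use your event $\mathcal{A}_0$ (these serve the same purpose, guaranteeing that the quantifier-free type of any tuple lies in $Q$), and the paper separates out the $n=0$ base case explicitly as events $C_{q,t}=\bigcup_{l>t}[\diam_q(x_l)]$ handled by Proposition~\ref{prop:inf-often}, whereas in your write-up the initialization step of the back-and-forth is left implicit; you should make that case explicit so the recursion has somewhere to start.
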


\begin{proof} 
We describe a Borel subset $K$ of $S_{\mathbf{x}}(\mathcal{U})$ such that if $q, p \in K$, then $q_{\mathcal{L}} \cong p_{\mathcal{L}}$. After defining this Borel set, we construct an isomorphism via a back-and-forth argument. We let $S_{\bar{x}}^{\qf}(\emptyset)$ denote the collection of quantifier-free types over the empty set. For each $n$, we let $E_{n} \coloneqq \{q \in S_{x_1,\ldots,x_n}^{\qf}(\emptyset): \mu(\diam_{q}(x_1,\ldots,x_n))> 0\}$. Since $\mathcal{L}$ is a finite relational language, we have that $S_{x_1,\ldots,x_n}^{\qf}(\emptyset)$ is finite for each $n$ and as a consequence, $E_n$ is non-empty for each $n$. Consider the following Borel sets: 

\begin{enumerate}
    \item If $q \in E_1$ and $t \geq 1 $, we let 
\begin{equation*}
    C_{q,t} \coloneqq \bigcup_{l > t}  [\diam_{q}(x_l)]. 
\end{equation*}
By Proposition \ref{prop:inf-often}, $\mathbb{P}_{\mu}(C_{q,t}) = 1$.
    \item Suppose $q(x_1,\ldots,x_{n},x_{n+1}) \in E_{n+1}$, $\bar{j} = j_1,\ldots,j_n$ are distinct indices, and $t \geq 1$. Let $r_{q}$ be the restriction of $q$ to the variables $x_1,\ldots,x_n$. We define
    \begin{equation*}
        D_{q,\bar{j},t} = \bigcup_{l > t} [\diam_{r_{q}}(x_{j_{1}},\ldots,x_{j_{n}}) \to \diam_{q}(x_{j_{1}},\ldots,x_{j_{n}},x_{l})]
    \end{equation*}
By Lemma \ref{lemma:ext2}, $\mathbb{P}_{\mu}(D_{q,\bar{j},t}) = 1$. 
\end{enumerate}
Now define the set $K$ as follows: 
\begin{equation*}
    K \coloneqq \supp(\mathbb{P}_\mu) \cap \bigcap_{\substack{q \in E_1 \\ t \geq 1}} C_{q,t} \cap \bigcap_{n \geq 1} \left( \bigcap_{\substack{q \in E_{n+1} \\ j_1 \neq \ldots \neq j_n \\t \geq 1}} D_{q,\bar{j},t} \right).
\end{equation*}
Notice that $K$ is the countable intersection of subsets of full measures. Hence $\mathbb{P}_{\mu}(K) = 1$. We claim that the set $K$ witnesses the isomorphism property. We now prove this via a back-and-forth argument. Fix $p, q \in K$. Let $(a_i)_{i \geq 1} \models p$ and $(b_i)_{i \geq 1} \models q$. These are points in an elementary extension of our monster, namely $\mathcal{U}'$. It suffices to construct and isomorphism from the induced structure on $\{a_i\}_{i \geq 1}$ to the induced structure on $\{b_i\}_{i \geq 1}$. Consider the following
\begin{enumerate}
    \item (Forwards) Consider $a_1$. We have that $\mathbb{P}_{\mu}(\diam_{a_1}(x_1)) > 0$ because $\diam_{a_1}(x_1) \in p$ and $p\in \supp(\mathbb{P}_\mu)$. Hence $K \subseteq C_{\tp_{\qf}(a_1/\emptyset),1}$ and so $q \in C_{\tp_{\qf}(a_1/\emptyset),1}$. Hence $\diam_{a_1}(x_l) \in q$ for some $l > 1$. Let $l$ be the smallest such index and let $f(a_1) = b_{l}$. 
    \item (Backwards) Similar to above, find $a_{t}$ and send $f(a_{t}) = b_1$. 
    \item (Forwards) Suppose we are given $a_{i_1},\ldots,a_{i_{n}},b_{j_1},\ldots,b_{j_n}$ and a bijection $f:\{a_{i_1},\ldots,a_{i_{n}}\} \to \{b_{j_{1}},\ldots,b_{j_{n}}\}$ such that $f(a_{i_l}) = b_{j_{l}}$ and $\tp_{\qf}(a_{i_1},\ldots,a_{i_{n}}/\emptyset) = \tp_{\qf}(b_{j_1},\ldots,b_{j_{n}}/\emptyset)$. We let $\bar{a} = (a_{i_1},\ldots,a_{i_n})$. Let $l$ be the smallest index such that $a_{l}$ does not yet appear in $\{a_{i_1},\ldots,a_{i_{n}}\}$. Since $p \in \supp(\mathbb{P}_{\mu})$, it follows that 
    \begin{equation*}
        \mathbb{P}_{\mu}(\diam_{\bar{a},a_l}(x_{i_1},\ldots,x_{i_n},x_l)) > 0.
    \end{equation*}
    Since $\mu$ is $M$-\strong, by Fact \ref{fact:easy}, we have that 
    \begin{equation*}
        \mathbb{P}_{\mu}(\diam_{\bar{a},a_{l}}(x_{i_1},\ldots,x_{i_n},x_{l})) = \mathbb{P}_{\mu}(\diam_{\bar{a},a_{l}}(x_{1},\ldots,x_{n+1})),
    \end{equation*} 
and so $\tp_{\qf}(\bar{a},a_{l}/\emptyset) \in E_{n+1}$. Choose $t > \max\{i_1,\ldots,i_n,j_1,\ldots,j_n\}$. Then $K \subseteq D_{\tp_{\qf}(\bar{a},a_{l}/\emptyset),\bar{j},t}$ and so $q \in D_{\tp_{\qf}(\bar{a},a_{l}/\emptyset),\bar{j},t}$. By construction, this implies that there exists an index $l_* > t$ such that 
\begin{equation*}
    q \in [\diam_{\bar{a}}(x_{j_1},\ldots,x_{j_n}) \to \diam_{\bar{a},a_l}(x_{j_1},\ldots,x_{j_n},x_{l_*})].
\end{equation*}
Then 
\begin{equation*}
    \mathcal{U}' \models \diam_{\bar{a}}(b_{j_1},\ldots,b_{j_n}) \to \diam_{\bar{a},a_l}(b_{j_1},\ldots,b_{j_n},b_{l_*}).
\end{equation*}
Since $\tp_{\qf}(a_{i_1},\ldots,a_{i_n}/\emptyset) = \tp_{\qf}(b_{j_1},\ldots,b_{j_n}/\emptyset)$, we have that 
\begin{equation*}
    \mathcal{U}' \models \diam_{\bar{a}}(b_{j_1},\ldots,b_{j_n}).
\end{equation*}
By modus ponens, we conclude 
\begin{equation*}
    \mathcal{U}' \models \diam_{\bar{a},a_l}(b_{j_1},\ldots,b_{j_n},b_{l_*}).
\end{equation*}
We let $f(a_{l}) = b_{l_*}$. 
\item (Backwards) Similar. 
\end{enumerate}

We claim that $f$ is an isomorphism. By construction, $f$ is a bijection which preserves quantifier-free types over the empty set. 
\end{proof}

We remark that \cref{theorem:main} may fail if only the adequate extension axiom holds (e.g., see Example \ref{example:bad}).

\begin{theorem}\label{theorem:cat} Suppose that $\mathcal{L}$ is a finite relational language. Suppose that $\mu \in \mathfrak{M}_{x}^{\inv}(\mathcal{U},M)$, $\mu$ satisfies the \strong\ measure extension axiom, and $\mu$ has no realized part. Let $\mathbb{P}_{\mu}(B_{N})=1$. Then $\Th_{\mathcal{L}}(N)$ is countably categorical. 
\end{theorem}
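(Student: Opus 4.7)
The plan is to apply Ryll-Nardzewski: it suffices to show that for each $n \geq 1$, the theory $Th_{\mathcal{L}}(N)$ has only finitely many complete $n$-types over $\emptyset$. Since $\mathcal{L}$ is a finite relational language, there are only finitely many quantifier-free $n$-types over $\emptyset$, so this reduces to showing that $N$ is ultrahomogeneous in the sense that any quantifier-free-type-preserving bijection between finite tuples of $N$ extends to an automorphism of $N$. Note that $N$ is countably infinite: the ``does not concentrate on points'' assumption together with Proposition~\ref{prop:concentrate} ensures that any realization of a type in $\supp(\mathbb{P}_\mu)$ has pairwise distinct coordinates, so the induced structure is countable infinite, not merely countable.

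To prove ultrahomogeneity, I would fix any $p \in K$, where $K$ is the full-measure Borel set constructed inside the proof of Theorem~\ref{theorem:main}; such $p$ exists because $\mathbb{P}_\mu(K) = 1$, and by construction $p \in B_N$, so we may identify $N$ with $p_{\mathcal{L}}$ via a realization $(a_i)_{i \geq 1} \models p$. Given a partial isomorphism $f_0 : (a_{i_1},\ldots,a_{i_k}) \mapsto (a_{j_1},\ldots,a_{j_k})$ between finite tuples of $N$ with the same quantifier-free type, the idea is to rerun the back-and-forth argument from the proof of Theorem~\ref{theorem:main}, but starting with $f_0$ rather than the empty map. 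The extension step is unchanged in spirit: when the current partial isomorphism sends indices $\bar{s} = (s_1,\ldots,s_n)$ to $\bar{t} = (t_1,\ldots,t_n)$ and we wish to add a new source element $a_l$, the fact that $p \in \supp(\mathbb{P}_\mu)$ gives $\mathbb{P}_\mu(\diam_{(a_{s_1},\ldots,a_{s_n},a_l)}) > 0$, so $\qftp(a_{s_1},\ldots,a_{s_n},a_l) \in E_{n+1}$. Choosing $m$ larger than any index used so far, membership of $p$ in $D_{\qftp(a_{s_1},\ldots,a_{s_n},a_l),\,\bar{t},\,m}$ yields an index $l_* > m$ such that $(a_{t_1},\ldots,a_{t_n},a_{l_*})$ satisfies the correct quantifier-free diagram; set $f(a_l) = a_{l_*}$. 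The backward step is symmetric, and a standard enumeration of $\{a_i\}_{i \geq 1}$ yields an automorphism of $N$ extending $f_0$.

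The main obstacle I anticipate is essentially bookkeeping: one must verify that the back-and-forth machinery designed in Theorem~\ref{theorem:main} to build an isomorphism \emph{from scratch} can genuinely be initialized with an arbitrary finite partial isomorphism without any of the quantifier-free constraints clashing. This should be immediate once one notes that the sets $D_{q,\bar{j},t}$ are indexed by \emph{all} tuples $\bar{j}$ of pairwise distinct indices and \emph{all} thresholds $t$, so the extension step depends on the current partial isomorphism only through its index set and its quantifier-free diagram, both of which are present at the very first step. With ultrahomogeneity established, Ryll-Nardzewski delivers countable categoricity.
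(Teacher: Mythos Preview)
Your proof is correct and essentially the same as the paper's: both rest on the observation that the back-and-forth machinery of Theorem~\ref{theorem:main} operates purely at the level of quantifier-free types over $\emptyset$. The paper packages this as an explicit list of $\forall\exists$ extension axioms (one for each $q\in E_{n+1}$) that $N$ satisfies and that directly encode a back-and-forth between any two countable models, whereas you rerun the same back-and-forth inside a single $p\in K$ to establish ultrahomogeneity and then invoke Ryll--Nardzewski; the underlying argument is identical.
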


\begin{proof} Since $\mu$ does not concentrate on finitely many realized types, we claim that $|N| = \aleph_0$. Recall the definition of $E_{n}$ from Theorem \ref{theorem:main}. Now consider the following set of axioms:
\begin{enumerate}
\item For each $q \in E_{n}$, we have $\exists x_1,\ldots,x_n q(x_1,\ldots,x_n)$.
    \item For each $q \in E_{n}$, $\forall x_1,\ldots,x_n ( \bigwedge_{1 \leq i < j \leq n}  x_i \neq x_j \to \bigvee_{q \in E_n} q(x_1,\ldots,x_n))$.
    \item For each $q \in E_{n+1}$, let $r_{q}$ be the restriction to the first $n$ variables. Then we have that 
    \begin{equation*}
        \forall x_1,\ldots,x_n \exists x_{n+1} \left( \bigwedge_{1 \leq i < j \leq n} x_i \neq x_j  \to ( r_{q}(x_1,\ldots,x_n) \to q(x_1,\ldots,x_n,x_{n+1})) \right).
    \end{equation*}
\end{enumerate}
By Theorem \ref{theorem:main} and Proposition \ref{prop:concentrate}, $N$ models the sentences above. Notice that given any two countable models, these axioms explicitly encode a back-and-forth argument. We conclude that our theory is $\aleph_0$-categorical. 
\end{proof}

Finally, our last observation stems from discussions with Ackerman, Freer, and Patel. It shows the first observed connection between labeled $\mathcal{L}$-structures and iterated Morley products of Keisler measures. We thank them for allowing us to add this theorem to our paper. 

\begin{theorem}\label{theorem:AFP}  Suppose that $\mathcal{L}$ is a finite relational language. Suppose moreover that $\mu \in \mathfrak{M}_{x}^{\inv}(\mathcal{U},M)$, $\mu$ has no realized part, and $\mu$ satisfies the excellent measure extension axiom. Let $N$ be the unique countable structure such that $\mathbb{P}_{\mu}(B_{N}) = 1$. Recall the map $g:S_{\mathbf{x}}^{+}(\mathcal{U}) \to \str_{\mathcal{L}}$ from Definition \ref{def:G_map}. Then the push-forward of $\mathbb{P}_{\mu}$ along $g$, denoted $g_{*}(\mathbb{P}_{\mu})$, is a measure on $\str_{\mathcal{L}}$ such that  
\begin{enumerate}
    \item $g_*(\mathbb{P}_{\mu})$ is invariant under the $\Sym(\mathbb{N})$-action on $\str_{L}$. 
    \item $g_*(\mathbb{P}_{\mu})$ concentrates on $N$, i.e. $g_*(\mathbb{P}_{\mu})(\{ M \in \str_{\mathcal{L}}: M \cong N\}) = 1$. 
\end{enumerate}
As a consequence of \cite[Theorem 1.1]{ackerman2016invariant}, the structure $N$ has trivial group-theoretic definable closure.
\end{theorem}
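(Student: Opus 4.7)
The plan is to verify (1) and (2) separately, after which the conclusion about trivial group-theoretic definable closure will follow by direct application of \cite[Theorem 1.1]{ackerman2016invariant} to $G_*(\mathbb{P}_\mu)$. Since $\mu$ does not concentrate on points, \cref{prop:concentrate} gives $\supp(\mathbb{P}_\mu) \subseteq S^+_{\mathbf{x}}(\mathcal{U})$, so $G_*(\mathbb{P}_\mu)$ is a well-defined Borel probability measure on $X_{\mathcal{L}}$.

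Claim (2) I expect to be essentially automatic: the proof of \cref{prop:borel} already identifies $B_N \cap S^+_{\mathbf{x}}(\mathcal{U})$ as $G^{-1}(\{g \in X_{\mathcal{L}} : A_g \cong N\})$, and \cref{theorem:main} supplies $\mathbb{P}_\mu(B_N) = 1$, whence
\begin{equation*}
G_*(\mathbb{P}_\mu)(\{g \in X_{\mathcal{L}} : A_g \cong N\}) = \mathbb{P}_\mu(B_N) = 1.
\end{equation*}

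For claim (1), my plan is to reduce the $\Sym(\mathbb{N})$-invariance of $G_*(\mathbb{P}_\mu)$ to invariance on a $\pi$-system of basic clopens, where excellence will do the work. The generating clopens of $X_{\mathcal{L}}$ take the form $C_{R, \bar{i}, \epsilon} := \{g : g^R(\bar{i}) = \epsilon\}$ for $R \in \mathcal{L}$, a tuple $\bar{i}$ of indices, and $\epsilon \in \{0, 1\}$; the natural $\Sym(\mathbb{N})$-action moves $C_{R, \bar{i}, \epsilon}$ to $C_{R, \sigma(\bar{i}), \epsilon}$. Their pullbacks under $G$ are the clopens $[R^{\epsilon}(x_{\bar{i}})]$ of $S^+_{\mathbf{x}}(\mathcal{U})$ (where $R^1 = R$ and $R^0 = \neg R$), so finite intersections pull back to clopens of the form $[\bigwedge_{j} R_j^{\epsilon_j}(x_{\bar{i}_j})]$. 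By the ``\strong'' clause of \cref{fact:easy}, which applies because $\mu$ is $M$-\strong\ (hence self-commuting), the $\mathbb{P}_\mu$-measure of such a clopen depends only on the abstract formula $\theta(x_1, \ldots, x_n)$ obtained by relabeling the $n$ distinct indices appearing in $\{\bar{i}_j\}_j$ and is independent of the specific indices themselves; since $\sigma$ merely sends distinct indices to distinct indices,
\begin{equation*}
\mathbb{P}_\mu\Bigl(\Bigl[\bigwedge_{j} R_j^{\epsilon_j}(x_{\bar{i}_j})\Bigr]\Bigr) = \mu^{(n)}(\theta(x_1, \ldots, x_n)) = \mathbb{P}_\mu\Bigl(\Bigl[\bigwedge_{j} R_j^{\epsilon_j}(x_{\sigma(\bar{i}_j)})\Bigr]\Bigr).
\end{equation*}
Finite intersections of generators form a $\pi$-system generating the Borel $\sigma$-algebra of $X_{\mathcal{L}}$, so the $\pi$-$\lambda$ theorem extends this invariance to all Borel subsets of $X_{\mathcal{L}}$.

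I expect the main obstacle to be notational bookkeeping---keeping the $\Sym(\mathbb{N})$-actions on $S^+_{\mathbf{x}}(\mathcal{U})$ and on $X_{\mathcal{L}}$ coherent through the equivariance of $G$, and keeping the direction of $\sigma$ versus $\sigma^{-1}$ straight. All genuinely model-theoretic content is funneled through a single appeal to self-commutativity via the \strong\ clause of \cref{fact:easy}; without excellence, the measure of a basic clopen would in general depend on the order of its indices and the argument would collapse. Once invariance is established, \cite[Theorem 1.1]{ackerman2016invariant} applies directly to the $\Sym(\mathbb{N})$-invariant probability measure $G_*(\mathbb{P}_\mu)$, which concentrates on the isomorphism type of $N$, to conclude that $N$ has trivial group-theoretic definable closure.
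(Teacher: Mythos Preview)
Your proposal is correct and follows essentially the same approach as the paper: both arguments reduce $\Sym(\mathbb{N})$-invariance to invariance on basic clopens and invoke the \strong\ clause of \cref{fact:easy} (self-commutativity), and both derive concentration on $N$ from \cref{prop:borel} together with \cref{theorem:main}. Your write-up is slightly more explicit about the $\pi$-$\lambda$ reduction, but the substance is identical.
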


\begin{proof} We prove the statements. 
\begin{enumerate}
    \item By Fact \ref{fact:invariant-measures-agree}, it suffices to show that $g_*(\mathbb{P}_{\mu})(\llbracket \theta(i_1,\ldots,i_n) \rrbracket) = g_*(\mathbb{P}_{\mu})( \llbracket \theta(1,\ldots,n) \rrbracket)$ for any quantifier-free $\mathcal{L}$-formula $\theta(x_1,\ldots,x_n)$ and any sequence of distinct natural numbers $i_1,\ldots,i_n$. Since $\mu$ is excellent, we have that $\mu$ is self-associative and self-commutes. So, by Fact \ref{fact:easy} and Proposition \ref{prop:qf}, 
    \begin{align*}
    g_*(\mathbb{P}_{\mu})(\llbracket \theta(i_1,\ldots,i_n)\rrbracket) &= \mathbb{P}_{\mu}(\theta(x_{i_1},\ldots,x_{i_n})) \\ &= \mathbb{P}_{\mu}(\theta(x_{1},\ldots,x_{n}))  \\ &=   g_*(\mathbb{P}_{\mu})(\llbracket \theta(1,\ldots,n) \rrbracket). 
    \end{align*}
    \item We let $I(N) = \{M \in \str_{\mathcal{L}} : M \cong N\}$. Notice that $g^{-1}(I(N)) \supseteq B_{N} \cap S_{\mathbf{x}}^{+}(\mathcal{U})$. By Theorem \ref{theorem:main}, 
    \begin{equation*}
        g_*(\mathbb{P}_{\mu})(I_{N}) \geq  \mathbb{P}_{\mu}(B_{N}) = 1. \qedhere 
    \end{equation*}
\end{enumerate}
\end{proof}

\section{Examples}

In this section, we work with explicit examples. While the examples themselves are usually quite intuitive, the computations are sometimes a little complicated. We begin with two examples such that the induced structure on almost all random generic types is isomorphic to a fixed countable model. These are our friends from Example \ref{example:main}. In both cases, the measure of certain events requires integrating particular functions over type space. It seems that this step, the integration of these functions, is the critical step for working with specific examples. Different examples require different techniques.

\begin{example}\label{example:random} Let $\mathcal{U}$ be a monster model of the Rado graph and $M$ a small submodel. For $t \in (0,1)$, we let $\mu_t$ be the unique Keisler measure in $\mathfrak{M}_{x}(\mathcal{U})$ such that for any sequence of distinct tuples $a_1,\ldots,a_n,b_1,\ldots,b_m$, we have that
\begin{equation*}
    \mu_t \left(\bigwedge_{i=1}^{n} R(x,a_i) \wedge \bigwedge_{i = 1}^{m}  \neg R(x,b_i) \right) =  t^{n} \left( 1 - t\right)^{m}. 
\end{equation*} We note that for any $t \in (0,1)$, the measure $\mu_{t}$ is definable over the empty set and so $\mu_{t} \in \mathfrak{M}_{x}^{\inv}(\mathcal{U},M)$. We claim that the induced structure on almost all random generic types is isomorphic to the unique countable model of the Rado graph. In other words, if $N$ is the unique countable model of the Rado graph, then $\mathbb{P}_{\mu_{t}}(B_{N}) = 1$. 
\end{example}
\begin{proof}

First note that the induced substructure on any subset of a graph is also a graph. Hence for any type $p \in S_{\mathbf{x}}(\mathcal{U})$, by construction $p_{\mathcal{L}}$ is a graph. For each pair of natural numbers $n$ and subset $A$ of $[n] = \{1,\ldots,n\}$, we define the set 

\begin{equation*}
    K_{n,A} \coloneqq \bigcup_{l=1}^{\infty} \left[\bigwedge_{i \in A }R(x_i,x_l) \wedge \bigwedge_{j \in [n] \backslash A} \neg R(x_{j},x_l)\right]. 
\end{equation*}
For each pair $(n,A)$, we prove that $\mathbb{P}_{\mu}(K_{n,A}) = 1$. So fix a pair $(n,A)$. For each $l \in \mathbb{N}$, we let 
\begin{equation*}\psi_{l}(x_{l},\bar{x}) \coloneqq  \left( \bigvee_{i \in A} \neg R(x_i,x_l) \vee \bigvee_{j \in [n] \backslash A} R(x_{j},x_l) \right). 
\end{equation*} 
Now consider the following computation: 
\begin{align*}
    \mathbb{P}_{\mu_t}(K_{n,A})  &=\mathbb{P}_{\mu_t} \left( \bigcup_{l=1}^{\infty} \left[\bigwedge_{i \in A}^{n}R(x_i,x_l) \wedge  \bigwedge_{j \in [n] \backslash A}\neg R(x_{j},x_l)\right]  \right) \\
    &=\lim_{k \to \infty} \mathbb{P}_{\mu_t} \left( \left[\bigvee_{l=1}^{k} \left( \bigwedge_{i \in A}R(x_i,x_l) \wedge \bigwedge_{j \in [n] \backslash A} \neg R(x_{j},x_l) \right) \right]  \right)\\
    &=\lim_{k \to \infty} 1 - \mathbb{P}_{\mu_t} \left( \left[\bigwedge_{l=1}^{k} \left( \bigvee_{i \in A} \neg R(x_i,x_l) \vee \bigvee_{j \in [n] \backslash A} R(x_{j},x_l) \right) \right]  \right)\\
    &=1 - \lim_{k \to \infty} \mathbb{P}_{\mu_t} \left( \left[\bigwedge_{l=1}^{k} \left( \bigvee_{i \in A} \neg R(x_i,x_l) \vee \bigvee_{j \in [n] \backslash A} R(x_{j},x_l) \right) \right]  \right)\\
    &\geq 1 - \lim_{\substack{k > n \\ k \to \infty}} \mathbb{P}_{\mu_t} \left( \left[\bigwedge_{l >  n }^{k} \left( \bigvee_{i \in A} \neg R(x_i,x_l) \vee \bigvee_{j \in [n] \backslash A} R(x_{j},x_l) \right) \right]  \right)\\
    &\overset{(*)}{=}1 - \lim_{\substack{k > n \\ k \to \infty}} \int_{S_{\bar{x}}(M)} \prod_{l > n }^k F_{\mu_{l}}^{\psi_l} d\mu_t^{(n)} \\
    &\overset{(**)}{=}1 - \lim_{\substack{k > n \\ k \to \infty}} \int_{S_{\bar{x}}(M)} \prod_{l >  n}^{k}\left(1 - t^{|A|} \left( 1- t \right)^{n - |A|} \right) d\mu_t^{(n)} \\
    &=1 - \lim_{\substack{k > n \\ k \to \infty}} \int_{S_{\bar{x}}(M)} \left(1 - t^{|A|} \left( 1-t \right)^{n - |A|} \right)^{k - n} d\mu_t^{(n)} \\
    &=1 - \lim_{\substack{k > n \\ k \to \infty}} \left(1 - t^{|A|} \left( 1-t \right)^{n - |A|} \right)^{k - n} \\
    &= 1 - 0 = 1.
\end{align*}
Equation $(*)$ follows from Lemma \ref{label:conditional}. Equation $(**)$ follows from the fact that for each $l > n$, the map $F_{\mu}^{\psi_{l}}$ is constant. Indeed, if $q \in S_{\bar{x}}(M)$ and $(a_1,\ldots,a_n) \models q$ then 
\begin{align*}
    F_{\mu_{l}}^{\psi_{l}}(q) &= \mu_{l}\left(\bigvee_{i \in A} \neg R(a_i,x_l) \vee \bigvee_{j \in [n] \backslash A} R(a_{j},x_l) \right) \\
    &=1 - \mu_{l}\left(\bigwedge_{i \in A}  R(a_i,x_l) \wedge \bigwedge_{j \in [n] \backslash A} \neg R(a_{j},x_l)\right)\\
    &= \left(1 - t^{|A|} \left( 1-t  \right)^{n-|A|} \right).  
\end{align*}
We now consider the set $V = \bigcap_{(n,A) : n \in \mathbb{N}, A \subseteq [n]} K_{n,A}  $. $V$ is the intersection of countably many open sets of full measures and so $\mathbb{P}_{\mu_t}(V) = 1$. We claim that for any $p \in V$, the structure $p_{\mathcal{L}}$ is a countable model of the Rado graph. Since the Rado graph is countably categorical, we have that $p_{\mathcal{L}}$ is isomorphic to the unique countable model, $N$. Thus, $V \subseteq B_{N}$ and since $\mathbb{P}_{\mu_t}(V) = 1$, so $\mathbb{P}_{\mu_t}(B_{N}) = 1$. 
\end{proof}

We need a computation lemma before our next example. 

\begin{lemma}\label{lemma:computation} Recall Example \ref{example:main}. We have that $M = (\mathbb{R},<)$, $\mu_{L} \in \mathfrak{M}_{x}^{\inv}(\mathcal{U},M)$. Let $\st:S_{\bar{x}}(M) \to \mathbb{R}^{n} \cup \{*\}$ be the usual standard part map. 
Let $\tp:\mathbb{R}^{n} \to S_{\bar{x}}(M)$ be the map sending a tuple to its associated type.  
Suppose that $f: S_{x_1,\ldots,x_n}(M) \to [0,1]$ is a Borel function such that for any $p,q \in \supp(\mu_{L}^{(n)})$, if $\st(p) = \st(q)$ then $f(p) = f(q)$. Then 
\begin{equation*}
    \int_{S_{\bar{x}}(M)} f d \mu_{L}^{(n)} = \int_{[0,1]^{n}} (f \circ \tp)  dL^{n}.
\end{equation*}
\end{lemma}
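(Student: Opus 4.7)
The plan is to establish the pushforward identity $\st_*(\mu_L^{(n)}) = L^n$ on $[0,1]^n$, so that the lemma then follows from the abstract change-of-variables formula applied to the factorization of $f$ through $\st$.

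The heart of the argument is to show by induction on $n$ that $\mu_L^{(n)}([\varphi_B]) = L^n(B)$ for every open box $B = \prod_{i=1}^n (a_i, b_i) \subseteq [0,1]^n$, where $\varphi_B := \bigwedge_{i=1}^n a_i < x_i < b_i$. The base case is immediate from the definition of $\mu_L$. For the inductive step, we unwind the Morley product
\begin{equation*}
\mu_L^{(n+1)}([\varphi_B]) = \int_{S_{\bar{x}}(M')} F_{\mu_L^{x_{n+1}}}^{\varphi_B}(q) \, d\mu_L^{(n)}(q),
\end{equation*}
and observe that, since the condition on $x_{n+1}$ in $\varphi_B$ does not entangle with the conditions on the earlier variables, for any $\bar{b} \models q$ we get
\begin{equation*}
F_{\mu_L^{x_{n+1}}}^{\varphi_B}(q) = \mu_L\bigl(a_{n+1} < x_{n+1} < b_{n+1}\bigr) \cdot \mathbf{1}_{[\varphi_{B'}]}(q) = (b_{n+1} - a_{n+1}) \cdot \mathbf{1}_{[\varphi_{B'}]}(q),
\end{equation*}
where $\varphi_{B'}$ is the restriction of $\varphi_B$ to the first $n$ variables. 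The inductive hypothesis closes the computation.

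Next I would verify that $\mu_L^{(n)}(\st^{-1}(\partial B)) = 0$: each boundary hyperplane $\{p : \st(p)_i = c\}$ is the countable intersection of the clopens $[c - 1/k < x_i < c + 1/k]$, whose $\mu_L^{(n)}$-measures tend to $0$ (using that $\mu_L$ is smooth and self-commuting, so $\mu_L^{(n)}$ behaves like a product in the $i$-th coordinate via Fact \ref{fact:easy}). Consequently $\mu_L^{(n)}(\st^{-1}(B)) = \mu_L^{(n)}([\varphi_B]) = L^n(B)$. Since open boxes form a $\pi$-system generating the Borel $\sigma$-algebra of $[0,1]^n$, a Dynkin $\pi$-$\lambda$ argument gives $\st_*(\mu_L^{(n)}) = L^n$.

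Finally, the hypothesized $\st$-invariance of $f$ lets us write $f = (f \circ \tp) \circ \st$ as Borel functions agreeing $\mu_L^{(n)}$-almost everywhere, treating $f \circ \tp$ as the canonical Borel representative on $[0,1]^n$ of the common value of $f$ on each $\st$-fiber in $\supp(\mu_L^{(n)})$. The abstract change-of-variables formula then yields
\begin{equation*}
\int_{S_{\bar{x}}(M)} f \, d\mu_L^{(n)} = \int_{[0,1]^n} (f \circ \tp) \, d(\st_* \mu_L^{(n)}) = \int_{[0,1]^n} (f \circ \tp) \, dL^n.
\end{equation*}
The main technical obstacle is the inductive step of the first part, specifically the clean decoupling of the integrand $F^{\varphi_B}$; this works directly for box-formulas but would require $o$-minimality to extend to arbitrary formulas. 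Fortunately, boxes already generate the Borel $\sigma$-algebra needed for the pushforward identity, so this is enough.
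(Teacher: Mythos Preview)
Your approach---establish $\st_*(\mu_L^{(n)}) = L^n$ on boxes, pass to all Borel sets by a $\pi$--$\lambda$ argument, then apply change of variables---is exactly what the paper's one-line proof (``standard push-forward/quotient argument'') is gesturing at, and your box computation and boundary estimate are correct. (The inductive step can in fact be replaced by a direct appeal to Lemma~\ref{lemma:product}, which gives $\mu_L^{(n)}([\varphi_B]) = \prod_i \mu_L(a_i < x_i < b_i)$ in one stroke, but your route is fine.)

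There is, however, a genuine gap in the final step, though it originates in the lemma's hypothesis rather than in your method. You assert $f = (f\circ\tp)\circ\st$ holds $\mu_L^{(n)}$-a.e., but the hypothesis constrains $f$ only on $\supp(\mu_L^{(n)})$, and the realized types $\tp(\bar r)$ lie \emph{outside} that support (each contains the formula $x_i = r_i$, which has $\mu_L$-measure zero). Nothing then forces $f(\tp(\st p))$ to agree with $f(p)$ for $p$ in the support; indeed $f = \mathbf 1_{\supp(\mu_L^{(n)})}$ satisfies the stated hypothesis yet gives left side~$1$ and right side~$0$. Your parenthetical about ``treating $f\circ\tp$ as the canonical Borel representative'' is exactly where this is swept under the rug. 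In the paper's actual applications (Example~\ref{example:DLO}) the function $f$ is globally of the form $g\circ\st$ for an explicit $g$, so the intended hypothesis is $\st$-invariance on all of $S_{\bar x}(M)$, not merely on the support; under that stronger reading your argument is complete.
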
 

\begin{proof} Standard push-forward/quotient argument and the fact that every definable subset of $M^{n}$ is Borel. 
\end{proof}

\begin{example}\label{example:DLO} Let $\mathcal{U}$ be a monster model of DLO and $\mathbb{R} = M \prec \mathcal{U}$. Let $L$ be the Lebesgue measure restricted to the interval $[0,1]$. We recall from Example \ref{example:main} the measure $\mu_{L}$ in $\mathfrak{M}_{x}^{\inv}(\mathcal{U},M)$. For any formula $\varphi(x) \in \mathcal{L}_{x}(\mathcal{U})$, we have that
\begin{equation*}
    \mu_{L}(\varphi(x)) = L(\{ r \in \mathbb{R} : \mathcal{U} \models \varphi(r)\}). 
\end{equation*}
Let $N$ be the unique countable model of DLO, i.e.\ $N \cong (\mathbb{Q},<)$. Then $\mathbb{P}_{\mu_{L}}(B_{N}) = 1$. In other words, the induced structure on almost all random generic types is isomorphic to $(\mathbb{Q},<)$. 
\end{example}

\begin{proof} For notation simplicity, in this proof we set $\mu = \mu_L$. We will also write $\mu_{x_{t}}$ simply as $\mu_{t}$ for $t \geq 1$.  The induced substructure on any subset of a total ordering is a total order. Hence it suffices to show that the induced structure on almost all random generic types is a dense ordering and is without endpoints. We first show the ``without endpoints'' condition: For any $n \geq 1$, we consider the sets
\begin{equation*}
    A_n \coloneqq \left( \bigcup_{l=1}^{\infty} [x_n < x_l] \right) \text{ and } F_n \coloneqq\left( \bigcup_{l=1}^{\infty} [x_n > x_l] \right)
\end{equation*}
We show that $\mathbb{P}_{\mu}(A_n) = 1$. A similar computation shows that $\mathbb{P}_{\mu}(F_n) =1$. Indeed, let $\psi_l(x_{n},x_l) \coloneqq x_n \geq x_l$. Now consider the following computation:
\begin{align*}
\mathbb{P}_{\mu}(A_n) &= 
    \mathbb{P}_{\mu} \left( \bigcup_{l=1}^{\infty} [x_n < x_l] \right) \\&= \lim_{k \to \infty} \mathbb{P}_{\mu} \left( \left[\bigvee_{l = 1}^{k} x_n < x_l \right] \right)\\
    &\geq  \lim_{\substack{k > n \\ k \to \infty}}\mathbb{P}_{\mu} \left( \left[\bigvee_{l > n}^{k} x_n < x_l \right] \right)\\
    &= 1 - \lim_{\substack{k > n \\ k \to \infty}}\mathbb{P}_{\mu} \left( \left[\bigwedge_{l > n}^{k} x_n \geq  x_l \right] \right)\\
     &\overset{(a)}{=} 1 - \lim_{\substack{k > n \\ k \to \infty}} \int_{S_{x_{n}}(M)} \prod_{l > n}^{k} F_{\mu_l}^{\psi_l} d\mu_n\\
     &\overset{(b)}{=}1 - \lim_{\substack{k > n \\ k \to \infty}} \int_{p \in S_{x_{n}}(M)}\st(p)^{k-n} d\mu_n\\
     &\overset{(c)}{=}1 - \lim_{\substack{k > n \\ k \to \infty}} \int_{[0,1]} x^{k-n} dL \\
     &=1 - \lim_{\substack{k > n \\ k \to \infty}} \frac{1}{k-n + 1}\\
     &= 1 - 0 = 1. 
\end{align*}
We provide the following justifications: 
\begin{enumerate}[($a$)]
    \item Lemma \ref{label:conditional}.
    \item Follows from a direct computation. Notice that if $q \in \supp(\mu_{n})$ and $a \models q$, then 
\begin{equation*}
    \prod_{l > n}^{k} F_{\mu_l}^{\psi_{l}}(q) = \prod_{l > n}^{k}\mu_{l}(a \geq x_{l}) = \st(a)^{k-n} = \st(q)^{k-n}.   
\end{equation*}
    \item Lemma \ref{lemma:computation}. 
\end{enumerate}
We now prove the density claim. For any $n,m \geq 1$, we consider the sets 
\begin{equation*}
    C_{n,m} \coloneqq \bigcup_{l =1}^{\infty} \left[ ( x_n < x_m) \to (x_n < x_l < x_m)   \right],
\end{equation*}
and 
\begin{equation*}
    D_{n,m} \coloneqq \bigcup_{l =1}^{\infty} \left[ ( x_n > x_m) \to (x_m < x_l < x_n)   \right],
\end{equation*}
Fix $n,m$ and let $h = \max\{m,n\}$. Here we let $\psi_{l}(x_{l};x_n,x_m) \coloneqq x_n < x_m \wedge \neg (x_n < x_l < x_m)$. Now Behold! 

\begin{align*} 
\mathbb{P}_{\mu}(C_{n,m}) &=\mathbb{P}_{\mu}  \left( \bigcup_{l =1}^{\infty} \left[ ( x_n < x_m) \to (x_n < x_l < x_m)   \right] \right) \\
& = \lim_{k \to \infty} \mathbb{P}_{\mu}\left( \left[x_n < x_m \to   \bigvee_{l=1}^{k} x_n < x_l < x_m  \right] \right) \\
& \geq \lim_{\substack{k \to \infty \\ k > \max\{m,n\}}} \mathbb{P}_{\mu}\left( \left[x_n < x_m \to   \bigvee_{l > \max\{m,n\}}^{k} x_n < x_l < x_m  \right] \right) \\
& \geq \lim_{\substack{k \to \infty \\ k > h}} 1- \mathbb{P}_{\mu}\left( \left[x_n < x_m \wedge \bigwedge_{l > h}^{k} \neg (x_n < x_l < x_m)  \right] \right) \\
& \overset{(a)}{=} 1 - \lim_{\substack{k \to \infty \\ k >h}}\int_{S_{x_nx_m}(M)} \prod_{l > h}^{k} F_{\mu_l}^{\psi_l} d(\mu_{n} \otimes \mu_{m}) \\
&\overset{(b)}{=}  1 - \lim_{\substack{k \to \infty \\ k >h}}\int_{p \in S_{x_nx_m}(M)} \mathbf{1}_{x_n < x_m}(p)   (1 - \st_{m}(p) + \st_{n}(p))^{k - h}  d(\mu_{n} \otimes \mu_{m}) \\
&\overset{(c)}{=}  1 - \lim_{\substack{k \to \infty \\ k >h}}\int_{(x,y) \in [0,1]^2 : x < y}  (1 - y + x)^{k - h}  dL^{2}\\
& \overset{(d)}{=} 1 - 0 = 1. 
\end{align*}

We provide the following justifications: 
\begin{enumerate}[($a$)]
    \item Lemma \ref{label:conditional}.
    \item Lemma \ref{lemma:computation}. 
    \item Fix $s \in S_{x_n,x_m}(M)$ and let $(a_n,a_m) \models s$. 
\begin{align*}
    \prod_{l > h}^{k} F_{\mu_l}^{\psi_{l}}(s) &= \prod_{l > h}^{k} \mu_{l}( a_n < a_m \wedge \neg (a_n < x_l < a_m))  \\ 
    &= \prod_{l > h}^{k} \left(  \mathbf{1}_{x_n < x_m} (a_n,a_m)  \cdot (1 - \mu_{l}(a_n < x_l < a_m) \right) \\
    &= \prod_{l > h}^{k} \left(  \mathbf{1}_{x_n < x_m} (a_n,a_m)  \cdot (1 - (\st(a_m) - \st(a_n)))  \right) \\
    &= \mathbf{1}_{x_n < x_m}(a_n,a_m) \prod_{l > h}^{k}  (1 - \st(a_m) + \st(a_n))   \\
    &= \mathbf{1}_{x_n < x_m}(a_n,a_m)   (1 - \st(a_m) + \st(a_n))^{k - h}   \\
    &= \mathbf{1}_{x_n < x_m}(s)   (1 - \st_{m}(s) + \st_{n}(s))^{k - h}. 
\end{align*}
\item Notice that if $g : \{(x,y) \in [0,1]: x < y\} \to [0,1]$ via $g(x,y) = 1 - y + x$, then the image of $g$ is a subset of $[0,1)$. Apply Fact \ref{fact:converge}. 
\end{enumerate}

A similar computation shows that $\mathbb{P}_{\mu}(D_{n,m}) = 1$. Consider the Borel set 
\begin{equation*}
        K = \bigcap_{n \geq 1} A_n \cap \bigcap_{n \geq 1} F_n \cap \bigcap_{n,m \geq 1} C_{n,m} \cap \bigcap_{n,m \geq 1} D_{n,m}. 
\end{equation*}
$K$ is the countable intersection of sets of full measures and therefore $\mathbb{P}_{\mu}(K) = 1$. We claim that if $p \in K$ then $p_{\mathcal{L}} \cong (\mathbb{Q},<)$. 
\end{proof}

The next example is easier to deal with. We provide it as a contrast to the examples in the next subsection. 

\begin{example} Consider $M = (2^{< \omega}, \leq)$ where $M \models \tau \leq \sigma$ if and only if $\tau$ is an initial segment of $\sigma$. Let $\mathcal{U}$ be a monster model of $T$. There is a unique measure $\mu \in \mathfrak{M}_{x}(\mathcal{U})$ such that for any $\tau \in M$, $\mu( \tau \leq x) = \frac{1}{2^{|\tau|}}$ and $\mu(x = \tau) = 0$. We claim that $\mu$ is $M$-\strong\ (even more, it is smooth). Now let $N$ be the unique countable model of an infinite anti-chain in this language. We claim that $\mathbb{P}_{\mu}(B_{N}) = 1$. 
\end{example}

\begin{proof}
Choose distinct $n,m \geq 1$. Since the measure $\mu$ is $M$-smooth, it self-commutes and so  $\mu_{x_n} \otimes \mu_{x_m} = \mu_{x_m} \otimes \mu_{x_n}$. Let $\varphi(x_{n},x_{m}) = x_{n} < x_{m}$ and notice 
\begin{align*}
    \mathbb{P}_{\mu}([x_n < x_m]) &= (\mu_{x_n} \otimes \mu_{x_m})(x_n < x_m)\\
    &= \int_{S_{x_m}(M)} F_{\mu_{x_n}}^{\varphi} d\mu_{x_n} \\
    &\overset{(*)}{=} \int_{S_{x_m}(M)}0 d\mu_{x_m} \\
    &= 0.\\
\end{align*}
We justify equation $(*)$. Let $p \in \supp(\mu_{x_m})$. Then there exists a unique path $\gamma_{p} \in 2^{\omega}$ such that for each initial segment $\tau$ of $\gamma_{p}$, $\tau \leq x \in p$. We let $\gamma_{p}(i)$ be the truncated path at height $i$. Let $a \models p$ and notice that

\begin{align*}
    F_{\mu_{x_n}}^{\varphi}(p) &= \mu(x_n < a)  \\
    & \overset{(*)}{\leq} \lim_{\ell \to \infty}\mu_{x_n} \left( \bigvee_{i=0}^{\ell} x_n = \gamma_{p}(i) \vee \gamma_{p}(\ell) \leq x_n \right)\\
    &\overset{(**)}{\leq} \lim_{\ell \to \infty} \mu_{x_m}(\gamma_{p}(\ell) \leq x_m)  \\
    &= \lim_{\ell \to \infty} \frac{1}{2^{\ell}} = 0. 
\end{align*}
Inequality $(*)$ follows from set containment while inequality $(**)$ follows from the fact that finite sets have measure $0$ with respect to $\mu$. 
We claim that 
\begin{equation*}
    K = \bigcap_{n,m \geq 1} [\neg (x_n < x_m)]
\end{equation*}
has measure one and if $p \in K$, $p_{\mathcal{L}} \cong N$. 
\end{proof}

\subsection{The bad} We now consider examples which answer a resounding no to Question \ref{question:main}. Our first example is in a finite relational language. We remark that the measure satisfies the \weak\ measure extension axiom. The measure is also quite simple: a measure which concentrates on two types. However, the measure fails to self commute and therefore it does not satisfy the \strong\ measure extension axiom. This obstruction arises from the asymmetry of the Morley product.

\begin{example}\label{example:bad} Consider the structure $M = (\mathbb{R} \times \{0,1\}; P,<)$.  We have that $M \models P((a,i))$ if and only if $i = 0$, and we have that $M \models (a,i) < (b,j)$ if and only if $a < b$. Let $\mathcal{U}$ be a monster model such that $M \prec \mathcal{U}$. Consider the two definable types $q_1,q_2$ where
\begin{enumerate}
    \item $P(x) \in q_1$ and $\{k < x: k \in \mathcal{U} \} \subseteq q_1$. 
    \item $\neg P(x) \in q_2$ and $\{k < x: k \in \mathcal{U} \} \subseteq q_2$.
\end{enumerate}
Consider the measure $\mu = \frac{1}{2}\delta_{q_1} + \frac{1}{2}\delta_{q_2}$. Then for any countable $\mathcal{L}$-structure $N$, we have that $\mathbb{P}_{\mu}(B_{N}) = 0$.  
\end{example}

\begin{proof} We need only consider $N$ for which there exists $p \in \supp(\mathbb{P}_{\mu})$ with $p_{\mathcal{L}} \cong N$. Fix $p \in \supp(\mathbb{P}_{\mu})$. We claim that $p|_{<}$ is isomorphic to $(\mathbb{N},<)$ via $x_i \to i$. Indeed, by $(3)$ of Proposition \ref{prop:bigsupport}, we have that $p = \bigotimes_{\ell < \omega} r_\ell(x_\ell)$ where $r_\ell \in \{q_1,q_2\}$. Moreover, by Proposition \ref{prop:concentrate}, we have that $p \vdash x_i \neq x_j$ whenever $i \neq j$. We only need to argue that $x_i < x_j \in p$ if and only if $i < j$. If $i < j$, notice that 
\begin{align*}
    (x_i < x_j) \in p &\Longleftrightarrow (x_i < x_j) \in \bigotimes_{\ell < \omega} r_{\ell} (x_\ell) \\ 
    &\Longleftrightarrow (x_i < x_j) \in r_j(x_j) \otimes r_i(x_i) \\ 
    &\Longleftrightarrow (a < x_j) \in r_j(x_j) \text{ where } a \models r_i|_{M}, 
\end{align*}
and the last line is true since $r_i \in \{q_1,q_2\}$ and the formula $(a< x_j)$ is in both of these types. 

Now suppose that $i$ is not greater than $j$. First, notice that if $i = j$, then $x_i < x_j \not \in p$ since $\mathcal{U} \models \forall x \neg (x < x)$. Now, if $i > j$, we have that,
\begin{align*}
    (x_i < x_j) \in p &\Longleftrightarrow (x_i < x_j) \in r_i(x_i) \otimes r_j(x) \\ 
    &\Longleftrightarrow (x_i < b) \in r_i(x_i) \text{ where } b \models r_j|_M, 
\end{align*}
and the last line is false since $r_i \in \{q_1,q_2\}$ and $(x_i < b)$ is in neither of the types.

For every $i \geq 1$, either $P(x_i) \in p$ or $\neg P(x_i) \in p$. For each $\tau \in 2^{<\omega}$, consider the formula given by
\begin{align*}
    A_{\tau}(x_1,\ldots,x_{|\tau|}) &\coloneqq \left( \bigwedge_{\tau(i) = 1}P(x_i) \wedge \bigwedge_{\tau(i) = 0} \neg P(x_i)\right). 
\end{align*}
A standard computation shows that 
\begin{equation*}
    \mathbb{P}_{\mu}( A_{\tau}(x_1,\ldots,x_{|\tau|})) = \frac{1}{2^{|\tau|}}.
\end{equation*}
 Notice that if $\tau$ and $\sigma$ are not initial segments of one another, then $[A_{\tau}(x_1,\ldots,x_{|\tau|})] \cap [A_{\sigma}(x_1,\ldots,x_{|\sigma|})] = \emptyset$. Furthermore, there exists some path $\gamma_{p}$ in $2^{\omega}$ such that
\begin{align*}
    p_{\mathcal{L}} \models \exists x_1,\ldots,x_{|\tau|}\left( A_{\tau}(x_1,\ldots,x_{|\tau|}) 
    \wedge \forall z (x_1 \leq z) \wedge \left(  \bigwedge_{i=1}^{|\tau|-1} \forall z (x_i < z \to x_{i+1}\leq z) \right) \right),
\end{align*}
if and only if $\tau$ is an initial segment of $\gamma_{p}$. So for any $p \in \supp(\mathbb{P}_{\mu})$, we have that 
\begin{equation*}
    \mathbb{P}_{\mu}(B_{p_{\mathcal{L}}}) \leq \lim_{i \to \infty} \mathbb{P}_{\mu}([A_{\gamma_p(i)}(x_1,\ldots,x_{i})]) = \lim_{i \to \infty } \frac{1}{2^{i}} = 0,
\end{equation*}
where $\gamma_{p}(i)$ is the path $\gamma_{p}$ truncated at height $i$. 
\end{proof}

The next example gives a measure which satisfies the \strong\ measure extension axiom but over an infinite language. We remark that the underlying theory is stable. This obstruction seems to arise because the type space is \emph{too large}. There are continuum many 1-types over the empty set which the measure equidistributes over, and so choosing a \emph{countable} subcollection of types cannot yield a unique model up to isomorphism.

\begin{example}\label{example:bad2} Consider the structure $M = ([0,1],(B_{p,q}(x))_{q,p \in \mathbb{Q} \cap [0,1]})$ where 
\begin{equation*}
    M \models B_{p,q}(a) \Longleftrightarrow |a- p| < q.  
\end{equation*}
There exists a unique measure $\mu \in \mathfrak{M}_{x}^{\inv}(\mathcal{U},M)$ determined by 
\begin{equation*}
    \mu(B_{p,q}(x)) = L(\{r \in [0,1]: M \models B_{p,q}(r) \})
\end{equation*}
where $L$ is the Lebesgue measure. Notice that for every $p \in \supp(\mathbb{P}_{\mu})$, we have that $p_{\mathcal{L}}$ realizes a countable collection of cuts. Notice moreover that if $p_{\mathcal{L}}$ and $q_{\mathcal{L}}$ realize different cuts, then they are not isomorphic. Realizing a specific cut $c$ has probability $0$ and so the collection of random generic types which do not realize $c$ has probability 1. So for any $p \in \supp(\mathbb{P}_{\mu})$ and any cut $c$ realized by $p$, we have that $\mathbb{P}_{\mu}(B_{p_{\mathcal{L}}}) \leq \mathbb{P}_{\mu}( \{q \in S_{\mathbf{x}}(\mathcal{U}): q_{\mathcal{L}}$ realizes $c\} )= 0$. This gives the result. 
\end{example}

\section{Witnessing dividing lines}

Throughout this section we assume that $\mathcal{L}$ is countable, but not necessarily relational.

The second kind of events we are interested in are those related to \emph{witnessing dividing lines}. There are many dividing lines, but in this section we focus on three: instability, the independence property, and the strict order property. We are motivated by the following \emph{soft question}:  

\begin{question} Given a random generic type, does it witness a dividing line? 
\end{question}

There are several ways to interpret the question. Under our interpretation, we show that these events satisfy a $0$-$1$ law. As an application, we prove that if $\mu$ is \emph{fim}, then $\mathbb{P}_{\mu}$-almost no random generic types witness instability, and hence neither witness the independence property nor the strict order property. Since \emph{fim} measures in NIP theories are called generically stable, these results reinforce the view that \emph{fim} measures outside NIP should be regarded as generically stable as well. In the next subsection, we turn to the local NIP setting and study certain permutation averages.

The next fact follows directly from the observation that the product of measures which are both definable and finitely satisfiable is again definable and finitely satisfiable (see \cite{NIP3}). 

\begin{fact}\label{fact:overkill} Suppose that $\mu \in \mathfrak{M}_{x}^{\inv}(\mathcal{U},M)$. If $\mu$ is \emph{fim}, then $\mu^{(m)}$ is definable and finitely satisfiable in $M$ for every $m \geq 1$. Recall that a measure $\lambda \in \mathfrak{M}_{x_1,\ldots,x_m}(\mathcal{U})$ is finitely satisfiable in $M$ if for every $\mathcal{L}(\mathcal{U})$-formula, if $\lambda(\varphi(x_1,\ldots,x_m)) > 0$ then there exists $a_1,\ldots,a_m$ in $M$ such that $\mathcal{U} \models \varphi(a_1,\ldots,a_m)$. 
\end{fact}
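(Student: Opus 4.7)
The plan is to proceed by induction on $m$, using the product fact from \cite{NIP3} as a black box: if $\lambda$ and $\nu$ are both definable and finitely satisfiable in $M$ (and $\lambda$ is Borel-definable, which is automatic), then $\lambda \otimes \nu$ is again definable and finitely satisfiable in $M$.

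First I would unpack that \emph{fim} over $M$ entails both definability and finite satisfiability in $M$. Definability is part of the definition. For finite satisfiability, suppose $\mu(\varphi(x,b)) > 0$ for some parameter $b$; pick $\epsilon < \mu(\varphi(x,b))$, choose $n > n_\epsilon$, and note that by clause (2) the set $\{(a_1,\ldots,a_n) \in M^n : \mathcal{U} \models \theta_n(a_1,\ldots,a_n)\}$ has positive $\mu^{(n)}$-measure, so in particular is nonempty. On any such approximating tuple the average $\Av(a_1,\ldots,a_n)(\varphi(x,b))$ is within $\epsilon$ of $\mu(\varphi(x,b))$, hence strictly positive, so some $a_i \in M$ satisfies $\varphi(a_i,b)$. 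Thus $\mu$ is finitely satisfiable in $M$.

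The base case $m=1$ is then $\mu^{(1)} = \mu$, which we have just seen is both definable and finitely satisfiable in $M$. For the inductive step, assume $\mu^{(m)}(x_1,\ldots,x_m)$ is definable and finitely satisfiable in $M$. By the recursive definition,
\[
\mu^{(m+1)}(x_1,\ldots,x_{m+1}) = \mu(x_{m+1}) \otimes \mu^{(m)}(x_1,\ldots,x_m),
\]
a Morley product in which both factors are definable and finitely satisfiable in $M$. Applying the cited preservation result from \cite{NIP3} concludes the induction.

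There is no real obstacle here, so the only thing to be careful about is bookkeeping: the preservation fact in \cite{NIP3} is sometimes stated with a particular convention on which factor is invariant, so I would verify that the convention matches the one used to define $\mu^{(m+1)}$ in the present paper (namely that the left factor is definable/invariant over $M$), which it does since $\mu$ itself is definable over $M$.
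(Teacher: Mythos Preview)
Your approach---induct on $m$ using the preservation of definability and finite satisfiability under Morley products from \cite{NIP3}---is exactly the paper's one-line argument.

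One wrinkle in the extra detail you supply: in arguing that fim implies finite satisfiability, the sentence ``the set $\{(a_1,\ldots,a_n) \in M^n : \mathcal{U} \models \theta_n(\bar{a})\}$ has positive $\mu^{(n)}$-measure, so in particular is nonempty'' is not quite right. That set is a subset of $M^n$, not a definable subset of $\mathcal{U}^n$, and $\mu^{(n)}(\theta_n)>0$ does not by itself produce realizations in $M$ (that would already be finite satisfiability of $\mu^{(n)}$, which is what you are proving). The correct justification is that $\theta_n$ is an $\mathcal{L}(M)$-formula and is consistent (since $\mu^{(n)}(\theta_n)>0$), so by $M \preceq \mathcal{U}$ it is realized by some tuple in $M^n$. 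With that fix your argument is fine; the paper itself simply takes the implication ``fim $\Rightarrow$ finitely satisfiable'' as known and does not spell it out.
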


We now define the events which pertain to instability, the independence property, and the strict order property. 

\begin{definition} Fix a formula $\varphi(x,y) \in \mathcal{L}_{xy}(\mathcal{U})$. The following formulas correspond to \emph{witnessing $n$-instability of $\varphi$}, \emph{shattering a set of size $n$ via $\varphi$}, and \emph{witnessing a strict order of length $n$ for $\varphi$} respectively.
\begin{enumerate}
    \item $O^{\varphi}_n(x_1,\ldots,x_n) \coloneqq \exists y_1,\ldots,y_n \left( \bigwedge_{1 \leq i \leq j \leq n} \varphi(x_i,y_j) \wedge \bigwedge_{1 \leq j < i \leq n} \neg \varphi(x_i,y_j) \right)$,
    \item $I_{n}^{\varphi}(x_1,\ldots,x_n) \coloneqq \exists y_{ \emptyset},\ldots,y_{\mathcal{P}(n)}  \bigwedge_{K \in \mathcal{P}(n)}\left( \bigwedge_{i \in K} \varphi(x_i,y_K) \wedge \bigwedge_{i \not \in K} \neg \varphi(x_i,y_K) \right)$,
    \item $L_{n}^{\varphi}(x_1,\ldots,x_n) \coloneqq \forall y (\varphi(x_1,y) \subsetneq \ldots \subsetneq \varphi(x_n,y))$. 
\end{enumerate}
The following events correspond to \emph{witnessing the instability of $\varphi$}, \emph{witnessing the independence property for $\varphi$} and \emph{witnessing the strict order property for $\varphi$}. 
\begin{enumerate}
    \item $\mathbf{O}^{\varphi} = \bigcap_{n=1}^{\infty} [O_{n}^{\varphi}(x_{1},\ldots,x_{n})]$,
    \item $\mathbf{I}^{\varphi} = \bigcap_{n=1}^{\infty} [I_{n}^{\varphi}(x_{1},\ldots,x_{n})]$, 
    \item $\mathbf{L}^{\varphi} = \bigcap_{n = 1}^{\infty} [L_{n}^{\varphi}(x_1,\ldots,x_n)]$. 
\end{enumerate}
The following events correspond to \emph{witnessing instability}, \emph{witnessing IP}, and \emph{witnessing the strict order property}. 
\begin{enumerate}
    \item $\mathbf{O} = \bigcup_{\varphi \in \mathcal{L}} \mathbf{O}^{\varphi}$
    \item $\mathbf{I} = \bigcup_{\varphi \in \mathcal{L}} \mathbf{I}^{\varphi}$
    \item $\mathbf{L} = \bigcup_{\varphi \in \mathcal{L}} \mathbf{L}^{\varphi}$
\end{enumerate}
\end{definition}

The following facts are either immediate or left to the reader as an exercise. We will sometimes use these results without reference.

\begin{fact}\label{fact:basic}  Let $\mu \in \mathfrak{M}_{x}^{\inv}(\mathcal{U},M)$ and suppose that $\mu$ is $M$-\weak. Fix any formula $\varphi(x,y) \in \mathcal{L}_{xy}(\mathcal{U})$. If $R \in \{O,I,L\}$,
\begin{enumerate}
    \item For any $n < m$, $[R_{m}^{\varphi}(x_1,\ldots,x_m)] \subseteq [R_{n}^{\varphi}(x_1,\ldots,x_n)]$
    \item $\mathbf{L}^{\varphi} \subseteq \mathbf{O}^{\varphi}$ and so $\mathbf{L} \subseteq \mathbf{O}$.
    \item $\mathbf{I}^{\varphi} \subseteq \mathbf{O}^{\varphi}$ and so $\mathbf{I} \subseteq \mathbf{O}$.
    \item $\mathbb{P}_{\mu}(\mathbf{R}^{\varphi}) = \lim_{n \to \infty} \mu^{(n)}(R_{n}^{\varphi}(x_1,\ldots,x_n))$. 
    \item $\mathbb{P}_{\mu}(\mathbf{R}) \leq \sum_{\varphi \in \mathcal{L}} \mathbb{P}_{\mu}(\mathbf{R}^{\varphi})$.  
\end{enumerate}
\end{fact}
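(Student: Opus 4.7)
Parts (1), (3), (4), and (5) all reduce to routine arguments. For (1), each implication $R_m^\varphi(x_1,\ldots,x_m) \to R_n^\varphi(x_1,\ldots,x_n)$ is purely first-order: given witnesses for $O_m^\varphi$, $I_m^\varphi$, or $L_m^\varphi$, one simply restricts the witnessing data---to the initial segment $y_1,\ldots,y_n$ of the $y_j$'s for $O$, to the subfamily indexed by $\mathcal{P}(n) \subseteq \mathcal{P}(m)$ for $I$, or to the initial segment of the chain for $L$---and reads off a witness for the corresponding $R_n^\varphi$-formula. The containment of clopen sets follows. For (3), given shattering witnesses $\{y_K\}_{K \in \mathcal{P}(n)}$ for $I_n^\varphi(x_1,\ldots,x_n)$, set $y_j := y_{\{1,\ldots,j\}}$ for $j = 1,\ldots,n$; by the defining property of $I_n^\varphi$ one has $\varphi(x_i, y_j) \iff i \in \{1,\ldots,j\} \iff i \leq j$, which is exactly the pattern demanded by $O_n^\varphi(x_1,\ldots,x_n)$.

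For (2), the containment reflects the classical principle that the strict order property entails the order property. Given a realization $(a_i)_{i \geq 1}$ of a type in $\mathbf{L}^\varphi$, the chain $\varphi(a_1, y) \subsetneq \varphi(a_2, y) \subsetneq \cdots$ yields, for each $k$, a witness $b_k$ with $\varphi(a_i, b_k) \iff i > k$. This is already a half-graph pattern; a small reindexing (together with the formula $\neg\varphi$ where needed) converts it into witnesses of the form required by $O_n^\varphi$, and in particular gives the overall inclusion $\mathbf{L} \subseteq \mathbf{O}$. The main fiddly point of the proof will be aligning indices so that the pattern lands exactly on the ``$i \leq j$'' convention chosen for $O_n^\varphi$; this is bookkeeping rather than an idea, and one can, if necessary, route the inclusion through $\mathbf{O}^\psi$ for a formula $\psi \in \mathcal{L}$ related to $\varphi$ by negation.

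For (4), by (1) the clopen sets $[R_n^\varphi(x_1,\ldots,x_n)]$ form a decreasing sequence of events whose intersection is $\mathbf{R}^\varphi$, so continuity of $\mathbb{P}_\mu$ from above yields $\mathbb{P}_\mu(\mathbf{R}^\varphi) = \lim_{n \to \infty} \mathbb{P}_\mu([R_n^\varphi(x_1,\ldots,x_n)])$. Since the variables appear in increasing order $x_1,\ldots,x_n$, Fact \ref{fact:easy} identifies each term with $\mu^{(n)}(R_n^\varphi(x_1,\ldots,x_n))$. Finally (5) is countable subadditivity: as $\mathcal{L}$ is countable, $\mathbf{R} = \bigcup_{\varphi \in \mathcal{L}} \mathbf{R}^\varphi$ is a countable union and $\mathbb{P}_\mu(\mathbf{R}) \leq \sum_{\varphi \in \mathcal{L}} \mathbb{P}_\mu(\mathbf{R}^\varphi)$.
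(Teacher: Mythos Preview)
The paper leaves this Fact as an exercise, so there is no proof to compare against; your sketches for (1), (3), (4), and (5) are exactly the routine arguments expected.

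For (2) your hedging is warranted---and in fact the first half of the claim, $\mathbf{L}^{\varphi} \subseteq \mathbf{O}^{\varphi}$, is false with the definitions exactly as written. In $(\mathbb{R},<) \prec \mathcal{U}$ with $\varphi(x,y) \coloneqq x > y$, let $p$ be the Morley power of the type at $+\infty$; any realization $(a_i)_{i\geq 1}$ has $a_1 < a_2 < \cdots$, so $\varphi(a_1,\cdot) \subsetneq \varphi(a_2,\cdot) \subsetneq \cdots$ and $p \in \mathbf{L}^{\varphi}$. But $O_{2}^{\varphi}(a_1,a_2)$ would require some $b_1$ with $a_1 > b_1$ and $a_2 \leq b_1$, impossible since $a_1 < a_2$; hence $p \notin \mathbf{O}^{\varphi}$. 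The point is that an increasing chain forces $\{i : \varphi(a_i,b)\}$ to be \emph{upward} closed for every $b$, whereas $O_n^{\varphi}$ asks for \emph{downward} closed sets $\{1,\ldots,j\}$.

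What your argument actually delivers is $\mathbf{L}^{\varphi} \subseteq \mathbf{O}^{\neg\varphi}$: your witnesses $b_k \in \varphi(a_{k+1},\cdot) \setminus \varphi(a_k,\cdot)$ satisfy $\neg\varphi(a_i,b_k) \iff i \leq k$, which is precisely $O_n^{\neg\varphi}(a_1,\ldots,a_n)$. This gives the global inclusion $\mathbf{L} \subseteq \mathbf{O}$, and the downstream uses (Proposition~\ref{prop:main}, Theorem~\ref{theorem:witness}) go through unchanged once one replaces $\varphi$ by $\neg\varphi$ where needed. Your instinct to route through the negation was exactly the right repair.
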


\begin{lemma}\label{lem:01} Let $\mu \in \mathfrak{M}_{x}^{\inv}(\mathcal{U},M)$ and suppose that $\mu$ is $M$-\weak. If $\varphi(x,y) \in \mathcal{L}_{xy}(\mathcal{U})$, then $\mathbb{P}_{\mu}(\mathbf{O}^{\varphi}), \mathbb{P}_{\mu}(\mathbf{I}^{\varphi}), \mathbb{P}_{\mu}(\mathbf{L}^{\varphi}) \in \{0,1\}$. 
\end{lemma}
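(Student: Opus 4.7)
The plan is to prove a Kolmogorov-style zero-one law by exhibiting a common submultiplicativity inequality for each of the three sequences $p_n^R \coloneqq \mathbb{P}_\mu([R_n^\varphi(x_1,\ldots,x_n)])$ with $R \in \{O, I, L\}$, and then taking limits. By Fact \ref{fact:basic}(1), the sequence $(p_n^R)_{n\geq 1}$ is monotone decreasing with limit $\mathbb{P}_\mu(\mathbf{R}^\varphi)$. If we can show $p_{2n}^R \leq (p_n^R)^2$, then in the limit we get $\mathbb{P}_\mu(\mathbf{R}^\varphi) \leq \mathbb{P}_\mu(\mathbf{R}^\varphi)^2$, which forces $\mathbb{P}_\mu(\mathbf{R}^\varphi) \in \{0, 1\}$.

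The structural step will be to verify the inclusion
\[
[R_{2n}^\varphi(x_1,\ldots,x_{2n})] \subseteq [R_n^\varphi(x_1,\ldots,x_n)] \cap [R_n^\varphi(x_{n+1},\ldots,x_{2n})].
\]
For $R = O$, any witnesses $y_1,\ldots,y_{2n}$ realizing the half-graph pattern on $x_1,\ldots,x_{2n}$ restrict to witnesses $y_1,\ldots,y_n$ for $O_n^\varphi(x_1,\ldots,x_n)$ and $y_{n+1},\ldots,y_{2n}$ for $O_n^\varphi(x_{n+1},\ldots,x_{2n})$, using that the defining condition $\models \varphi(x_i,y_j) \Leftrightarrow i \leq j$ is preserved by uniform index shifts. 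For $R = I$, shattering of $x_1,\ldots,x_{2n}$ by $\varphi$ trivially implies shattering of each half, since every subset of $\{1,\ldots,n\}$ (resp.\ of $\{n+1,\ldots,2n\}$) extends to a subset of $\{1,\ldots,2n\}$ whose witness will serve. For $R = L$, a strict ascending chain of fibers along $x_1,\ldots,x_{2n}$ restricts to strict ascending chains on each consecutive half.

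With the inclusion in hand, Lemma \ref{lemma:product} applies since $\{x_1,\ldots,x_n\}$ and $\{x_{n+1},\ldots,x_{2n}\}$ are disjoint families of variables, giving
\[
p_{2n}^R \leq \mathbb{P}_\mu\bigl([R_n^\varphi(x_1,\ldots,x_n)]\bigr) \cdot \mathbb{P}_\mu\bigl([R_n^\varphi(x_{n+1},\ldots,x_{2n})]\bigr).
\]
Because $x_{n+1} < \cdots < x_{2n}$ is an \emph{increasing} tuple of indices, Fact \ref{fact:easy} lets us rename these variables to $x_1,\ldots,x_n$ without changing the measure, so the second factor also equals $p_n^R$, delivering $p_{2n}^R \leq (p_n^R)^2$.

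I do not foresee a serious obstacle; the argument is entirely formal once the three inclusions are noted, and each of them follows directly from the definitions of $O_n^\varphi$, $I_n^\varphi$, and $L_n^\varphi$. The only subtle point worth flagging is that Fact \ref{fact:easy} for a merely adequate (as opposed to excellent) measure requires the indices to be in increasing order, which is precisely why the proof partitions the $2n$-tuple into the first and last $n$ coordinates rather than, for instance, even and odd indices.
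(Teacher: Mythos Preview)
Your proposal is correct and follows essentially the same approach as the paper: both arguments use the inclusion $[R^{\varphi}_{mn}] \subseteq \bigcap_i [R^{\varphi}_n(\text{block}_i)]$ together with Lemma~\ref{lemma:product} and Fact~\ref{fact:easy} to obtain submultiplicativity of the sequence $p_n^R$. The only cosmetic difference is that the paper first splits into the case where all $p_n^R = 1$ versus some $p_k^R < 1$, and in the latter case breaks $[0,mk]$ into $m$ blocks of size $k$ to exhibit a subsequence tending to $0$; your doubling argument $p_{2n}^R \leq (p_n^R)^2$ and passage to the limit $L \leq L^2$ accomplishes the same thing without the case split, which is arguably cleaner.
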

\begin{proof}
    We prove the statement for $\mathbf{O}^{\varphi}$. A similar proof works for the other dividing lines. We have two cases:
    \begin{enumerate}
        \item $\mu^{(n)}(O_{n}^{\varphi}(x_1,\ldots,x_n)) = 1$ for every $n \geq 1$. Then clearly $\mathbb{P}_{\mu}(\mathbf{O}^{\varphi}) = 1$. 
        \item Otherwise, let $k$ be the smallest integer such that $\mu^{(k)}(O_{k}^{\varphi}(x_1,\ldots,x_k)) < 1$. Notice that the sequence $(\mu^{(n)}(O^{\varphi}_n(x_1,\ldots,x_k)))_{n \geq 1}$ is a decreasing sequence bounded below by $0$. Hence it converges. Thus it suffices to find a subsequence which converges to $0$. Notice that
    \begin{align*}
    \mu^{(mk)}(O^{\varphi}_{mk}(x_1,\ldots,x_{mk})) &\leq \mu^{(mk)}\left( \bigwedge_{i=1}^{m} O^{\varphi}_k(x_{(i-1)\cdot k + 1},\ldots,x_{i \cdot k}) \right)\\
    &\overset{(*)}{=}  \left( \mu^{(k)}(O_{k}(x_1,\ldots,x_k)) \right)^{m} \to 0,
    \end{align*}
    as $m$ goes to infinity. Equation $(*)$ follows by  \cref{lemma:product}. \qedhere
\end{enumerate}
\end{proof}

\begin{proposition}\label{prop:main} Let $\mu \in \mathfrak{M}_{x}^{\inv}(\mathcal{U},M)$ and $\varphi(x,y) \in \mathcal{L}_{xy}(\mathcal{U})$. If $\mu$ is \emph{fim} then $\mathbb{P}_{\mu}(\mathbf{O}^{\varphi}) = \mathbb{P}_{\mu}(\mathbf{I}^{\varphi})  = \mathbb{P}_{\mu}(\mathbf{L}^{\varphi}) = 0$. 
\end{proposition}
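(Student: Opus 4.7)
By Lemma~\ref{lem:01} each of $\mathbb{P}_\mu(\mathbf{O}^{\varphi})$, $\mathbb{P}_\mu(\mathbf{I}^{\varphi})$, $\mathbb{P}_\mu(\mathbf{L}^{\varphi})$ lies in $\{0,1\}$, and by Fact~\ref{fact:basic} the last two are dominated by the first, so it is enough to rule out $\mathbb{P}_\mu(\mathbf{O}^{\varphi})=1$. I argue by contradiction. Assuming $\mathbb{P}_\mu(\mathbf{O}^{\varphi})=1$, Fact~\ref{fact:basic}(4) gives $\mu^{(n)}(O_n^{\varphi}(x_1,\dots,x_n))=1$ for every $n$.

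The first real step is to upgrade ordered instability to full shattering, using the strong symmetry that fim gives the Morley product. By Example~\ref{example:main}(3), $\mu$ self-commutes (in fact it commutes with every Borel-definable measure) and is self-associative, so $\mu^{(n)}$ is invariant under the natural permutation action of $\Sym(n)$ on the variable tuple. Hence $\mu^{(n)}(O_n^{\varphi}(x_{\sigma(1)},\dots,x_{\sigma(n)}))=1$ for every $\sigma\in\Sym(n)$, and intersecting the finitely many resulting full-measure sets yields a $\mu^{(n)}$-full set contained in $[I_n^{\varphi}(x_1,\dots,x_n)]$: given any $S\subseteq[n]$, pick $\sigma$ with $\sigma(\{1,\dots,|S|\})=S$, and the $|S|$-th existential witness for the permuted ordered-instability pattern is precisely a parameter whose $\varphi$-trace on $\{x_1,\dots,x_n\}$ is $S$. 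Thus $\mu^{(n)}(I_n^{\varphi}(x_1,\dots,x_n))=1$ for every $n$.

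Now I bring in fim in earnest. Let $\theta_n(x_1,\dots,x_n)$ be the fim approximation formulas for $\varphi$, so that $\mu^{(n)}(\theta_n)\to 1$ and the uniform approximation error guaranteed on $\theta_n$-tuples tends to $0$. By Fact~\ref{fact:overkill}, $\mu^{(n)}$ is finitely satisfiable in $M$, so for every $n$ large enough that $\mu^{(n)}(\theta_n)>1/2$ the intersection $\theta_n\wedge I_n^{\varphi}$ has positive $\mu^{(n)}$-measure, and finite satisfiability produces $a_1,\dots,a_n\in M$ realizing both. For each $S\subseteq[n]$, shattering delivers $b_S\in\mathcal{U}$ with $\varphi(a_i,b_S)\Leftrightarrow i\in S$; the empirical average of $\varphi(x,b_S)$ on the sample is exactly $|S|/n$, and $\theta_n$ forces $|\mu(\varphi(x,b_S))-|S|/n|<\varepsilon_n$ uniformly in $S$, where $\varepsilon_n$ can be made arbitrarily small by enlarging $n$.

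The main obstacle is now to turn this into a numerical contradiction. My plan is to apply fim a second time, to an auxiliary formula built from $\varphi$, such as $\psi_k(x,(y_1,\dots,y_k))\coloneqq \bigvee_{j=1}^{k}\varphi(x,y_j)$, which carries its own approximation formulas $\theta_n^{(k)}$ with $\mu^{(n)}(\theta_n^{(k)})\to 1$. Repeating the finite-satisfiability step on $\theta_n\wedge\theta_n^{(k)}\wedge I_n^{\varphi}$ and evaluating $\psi_k$ at the singleton witnesses $b_{\{1\}},\dots,b_{\{k\}}$, each singleton satisfies $\mu(\varphi(x,b_{\{i\}}))<1/n+\varepsilon_n$; the empirical average of $\psi_k(x,(b_{\{1\}},\dots,b_{\{k\}}))$ on the same sample is exactly $k/n$, while the $\theta_n^{(k)}$-bound pins $\mu(\psi_k)$ within $\varepsilon_n^{(k)}$ of $k/n$, and a subadditivity/inclusion-exclusion estimate comparing $\mu(\psi_k)$ against $\sum_i\mu(\varphi(x,b_{\{i\}}))$ will clash with it once $k$ is chosen well as a function of $n$. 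The hardest — and most delicate — point is coordinating the two rates $\varepsilon_n$ and $\varepsilon_n^{(k)}$ so that this clash actually materialises: this likely requires passing to a subsequence of $n$'s along which the fim rate $\varepsilon_n$ is tamed relative to $1/n$, or else choosing $k=k(n)$ small enough that the combinatorial factor from the disjunction does not swallow the discrepancy. This coordination is the step that uses fim essentially — as opposed to merely being dfs and self-commuting, which (as $\mu_t$ on the random graph shows) is not enough for the conclusion.
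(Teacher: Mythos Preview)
Your reduction is correct and the upgrade from $\mathbf{O}^{\varphi}$ to $\mathbf{I}^{\varphi}$ via self-commuting is valid, but the final step does not produce a contradiction. Run the numbers: from the singleton witnesses you get $\mu(\varphi(x,b_{\{i\}}))<1/n+\varepsilon_n$, so subadditivity gives $\mu(\psi_k)\le k/n+k\varepsilon_n$; from $\theta_n^{(k)}$ you get $\mu(\psi_k)\ge k/n-\varepsilon_n^{(k)}$. These inequalities are \emph{compatible} for every choice of $k$ and every pair of rates. Inclusion--exclusion does not help either, since you have no control on the pairwise intersections $\mu(\varphi(x,b_{\{i\}})\wedge\varphi(x,b_{\{j\}}))$. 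The difficulty you flag is not a matter of coordinating rates; there simply is no clash to extract from a \emph{single} fim-good sample, however well it is shattered: the conclusion $\mu(\varphi(x,b_S))\approx |S|/n$ is perfectly consistent behaviour for a measure.

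The paper's argument avoids this by working with \emph{two} disjoint samples and a \emph{single} parameter. From $\mu^{(2n)}(O_{2n}^{\varphi})=1$ and $\mu^{(n)}(\theta_n)\approx 1$ one has
\[
\mathbb{P}_{\mu}\big(O_{2n}^{\varphi}(x_1,\dots,x_{2n})\wedge \theta_n(x_1,\dots,x_n)\wedge \theta_n(x_{n+1},\dots,x_{2n})\big)>0,
\]
using Lemma~\ref{lemma:product} for the two $\theta_n$ blocks. Finite satisfiability of $\mu^{(2n)}$ in $M$ (Fact~\ref{fact:overkill}) then yields $\bar a,\bar b\in M^{n}$ and a single $c$ with $\Av(\bar a)(\varphi(x,c))=1$ and $\Av(\bar b)(\varphi(x,c))=0$, both within $\varepsilon_n$ of $\mu(\varphi(x,c))$ --- an immediate contradiction. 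Note this uses only the ordered pattern $O_{2n}^{\varphi}$ and no self-commuting at all; your detour through shattering is unnecessary.
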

\begin{proof} By $(2)$ and $(3)$ of Fact \ref{fact:basic}, it suffices to show that $\mathbb{P}_{\mu}(\mathbf{O}^{\varphi}) = 0$. By Lemma \ref{lem:01}, it suffices to show that $\mathbb{P}_{\mu}(\mathbf{O}^{\varphi}) \neq 1$. Towards a contradiction, assume $\mathbb{P}_{\mu}(\mathbf{O}^{\varphi}) = 1$. This implies that $\mu^{(n)}(O_{n}^{\varphi}(x_1,\ldots,x_n)) =1$ for each $n \geq 1$. Since $\mu$ is \emph{fim}, there exists a formula $\theta_{n}(x_1,\ldots,x_n)$ such that 
\begin{enumerate}
    \item $\mu^{(n)}(\theta_{n}(x_1,\ldots,x_n)) \approx_{\frac{1}{100}}  1$
    \item If $\models \theta_{n}(\bar{a})$, then 
    \begin{equation*}
        \mu(\varphi(x,c)) \approx_{\frac{1}{100}} \Av(\bar{a})(\varphi(x,c)). 
    \end{equation*}
\end{enumerate}
Now notice that since $\mathbb{P}_{\mu}([O_{2n}^{\varphi}(x_1,\ldots,x_{2n})]) = 1$, 
\begin{align*}
    \mathbb{P}_{\mu}\Big( O_{2n}^{\varphi}(x_{1},\ldots,x_{2n}) \wedge \theta_{n}(x_1,\ldots,x_n) \wedge &\theta_n(x_{n+1},\ldots,x_{2n})\Big) \\
    &= \mathbb{P}_{\mu}\left(  \theta_{n}(x_1,\ldots,x_n) \wedge \theta_n(x_{n+1},\ldots,x_{2n})\right)  \\ 
    &= \mathbb{P}_{\mu}\left(  \theta_{n}(x_1,\ldots,x_n) \right) \cdot \mathbb{P}_{\mu} \left(\theta_n(x_{n+1},\ldots,x_{2n})\right) \\ 
    &= \left(1- \frac{1}{100}\right)^{2} > 0. 
\end{align*}
By Fact \ref{fact:overkill} the measure $\mu^{(2n)}$ is finite satisfiable in $M$. Therefore there exists some tuple $(\bar{a},\bar{b})$ in $M$ such that $\mathcal{U} \models O_{2n}^{\varphi}(\bar{a},\bar{b}) \wedge \theta_n(\bar{a}) \wedge \theta_{n}(\bar{b})$. Hence there exists some $c \in M$ such that $\mathcal{U} \models \bigwedge_{i \leq n} \varphi(a_i,c) \wedge \bigwedge_{i \leq n} \neg \varphi(b_i,c)$. However, this implies that $\Av(\bar{a})(\varphi(x,c)) = 1$ while $\Av(\bar{b})\varphi(x,c)) = 0$. In particular,  
\begin{align*}
    1 &= \Av(\bar{a})(\varphi(x,c)) \\
    &\approx_{\frac{1}{100}} \mu(\varphi(x,c)) \\ 
    &\approx_{\frac{1}{100}} \Av(\bar{b})(\varphi(x,c)) \\ 
    &= 0 \\ 
    &\Longrightarrow 1 \approx_{\frac{2}{100}} 0. 
\end{align*}
This is a contradiction.  
\end{proof}

\begin{theorem}\label{theorem:witness} Let $\mu \in \mathfrak{M}_{x}^{\inv}(\mathcal{U},M)$. If $\mu$ is \emph{fim}, then $\mathbb{P}_{\mu}(\mathbf{O})= \mathbb{P}_{\mu}(\mathbf{I})= \mathbb{P}_{\mu}(\mathbf{L})= 0 $.
\end{theorem}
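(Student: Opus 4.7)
The theorem is essentially an immediate consequence of the preceding proposition combined with countable subadditivity, so the plan is short. First I would note that since $\mathcal{L}$ is assumed countable (as stated at the start of the section), the collection of $\mathcal{L}$-formulas $\varphi(x,y)$ is countable, hence each of the unions $\mathbf{O} = \bigcup_{\varphi \in \mathcal{L}} \mathbf{O}^{\varphi}$, $\mathbf{I} = \bigcup_{\varphi \in \mathcal{L}} \mathbf{I}^{\varphi}$, and $\mathbf{L} = \bigcup_{\varphi \in \mathcal{L}} \mathbf{L}^{\varphi}$ is a countable union of Borel sets (each $\mathbf{R}^{\varphi}$ is a countable intersection of clopen sets, hence Borel), so these unions are in particular $\mathbb{P}_{\mu}$-measurable events.

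Next, I would invoke Proposition \ref{prop:main} for each individual $\mathcal{L}$-formula $\varphi(x,y)$. Since $\mu$ is \emph{fim} by hypothesis, the proposition gives $\mathbb{P}_{\mu}(\mathbf{O}^{\varphi}) = \mathbb{P}_{\mu}(\mathbf{I}^{\varphi}) = \mathbb{P}_{\mu}(\mathbf{L}^{\varphi}) = 0$ for every $\varphi \in \mathcal{L}$.

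Finally, I would apply countable subadditivity of $\mathbb{P}_{\mu}$, which is precisely Fact \ref{fact:basic}(5): for $R \in \{O,I,L\}$,
\begin{equation*}
  \mathbb{P}_{\mu}(\mathbf{R}) \leq \sum_{\varphi \in \mathcal{L}} \mathbb{P}_{\mu}(\mathbf{R}^{\varphi}) = 0.
\end{equation*}
Since probabilities are nonnegative, this forces $\mathbb{P}_{\mu}(\mathbf{O}) = \mathbb{P}_{\mu}(\mathbf{I}) = \mathbb{P}_{\mu}(\mathbf{L}) = 0$. There is no real obstacle here — all the work was done in Proposition \ref{prop:main}; the only ingredient this statement adds is the countability of the language, which promotes the per-formula vanishing to a global statement.
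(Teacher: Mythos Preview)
Your proof is correct and essentially identical to the paper's: both invoke Proposition~\ref{prop:main} per formula and then apply countable subadditivity (Fact~\ref{fact:basic}(5)). The only cosmetic difference is that the paper additionally uses the containments $\mathbf{I},\mathbf{L}\subseteq\mathbf{O}$ from Fact~\ref{fact:basic}(2),(3) to reduce to the $\mathbf{O}^{\varphi}$ case, whereas you treat each of the three events directly.
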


\begin{proof} By Fact \ref{fact:basic} and Proposition \ref{prop:main}, 
\begin{equation*}
    \mathbb{P}_{\mu}(\mathbf{I}) \leq  \mathbb{P}_{\mu}(\mathbf{O}) \leq \sum_{\varphi \in \mathcal{L}} \mathbb{P}_{\mu}(\mathbf{O}^{\varphi}) = \sum_{\varphi \in \mathcal{L}} 0 = 0. 
\end{equation*}
Replacing $\mathbf{I}$ with $\mathbf{L}$ gives the other computation. 
\end{proof}

\subsection{Extension and alternate interpretations}

As mentioned earlier, the definitions of $\mathbf{O}^{\varphi},\mathbf{I}^{\varphi}$ and $\mathbf{L}^{\varphi}$ are \emph{on-the-nose}. Theorem \ref{theorem:witness} holds in a slightly more general setting. We may replace \emph{witnessing} with \emph{eventually witnessing} and derive a similar result. We remark that the proof is quite similar to the proof of Theorem \ref{theorem:witness} and so we provide the definitions and give a sketch of the argument. 

\begin{definition} Fix a formula $\varphi(x,y) \in \mathcal{L}_{xy}(\mathcal{U})$. For any $t \geq 1$ and $R \in \{O,I,L\}$ we let 
\begin{enumerate}
    \item $\mathbf{R}_{t}^{\varphi} \coloneqq \bigcap_{k \geq 0} [R_{k}^{\varphi}(x_t,\ldots,x_{t+k})]$.
    \item $\mathbf{R}_{E}^{\varphi} \coloneqq \bigcup_{t \geq 1} \mathbf{R}_{t}^{\varphi}$. 
    \item $\mathbf{R}_{E} = \bigcup_{\varphi \in \mathcal{L}} \mathbf{R}_{E}^{\varphi}$. 
\end{enumerate}
For instance, we interpret $p \in \mathbf{O}_{E}$ as $p$ \emph{eventually witnesses instability}. 
\end{definition}

\begin{corollary} Let $\mu \in \mathfrak{M}_{x}^{\inv}(\mathcal{U},M)$. If $\mu$ is \emph{fim} then $\mathbb{P}_{\mu}(\mathbf{O}_{E})= \mathbb{P}_{\mu}(\mathbf{I}_{E})= \mathbb{P}_{\mu}(\mathbf{L}_{E})= 0$. 
\end{corollary}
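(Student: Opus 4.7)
The plan is to reduce to \cref{theorem:witness} by exploiting the self-commutativity of fim measures to ``shift'' the starting index. Since $\mu$ is fim, Example \ref{example:main}(3) tells us that $\mu$ is in fact $M$-\strong. In particular, by the second half of Fact \ref{fact:easy}, for any formula $\psi(x_1,\ldots,x_n) \in \mathcal{L}_{\mathbf{x}}(\mathcal{U})$ and any pairwise distinct indices $j_1,\ldots,j_n$, we have
\begin{equation*}
    \mathbb{P}_{\mu}([\psi(x_{j_1},\ldots,x_{j_n})]) = \mu^{(n)}(\psi(x_1,\ldots,x_n)).
\end{equation*}

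Fix $\varphi(x,y) \in \mathcal{L}_{xy}(\mathcal{U})$, $R \in \{O,I,L\}$, and $t \geq 1$. Unwinding the definition of $\mathbf{R}_t^{\varphi}$ as the event that the tail of the random generic type starting at index $t$ on-the-nose witnesses the relevant dividing line, continuity of measure from above gives
\begin{equation*}
    \mathbb{P}_{\mu}(\mathbf{R}_t^{\varphi}) = \lim_{n \to \infty} \mathbb{P}_{\mu}([R_n^{\varphi}(x_t, x_{t+1}, \ldots, x_{t+n-1})]).
\end{equation*}
By the shifted version of Fact \ref{fact:easy} above, each term on the right equals $\mu^{(n)}(R_n^{\varphi}(x_1,\ldots,x_n))$, so the limit is precisely $\mathbb{P}_{\mu}(\mathbf{R}^{\varphi})$, which is $0$ by Proposition \ref{prop:main}.

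Countable subadditivity finishes the argument: since $\mathbf{R}_E^{\varphi} = \bigcup_{t \geq 1} \mathbf{R}_t^{\varphi}$, we get $\mathbb{P}_{\mu}(\mathbf{R}_E^{\varphi}) \leq \sum_{t \geq 1} \mathbb{P}_{\mu}(\mathbf{R}_t^{\varphi}) = 0$. Then, because $\mathcal{L}$ is countable and $\mathbf{R}_E = \bigcup_{\varphi \in \mathcal{L}} \mathbf{R}_E^{\varphi}$, another countable union yields $\mathbb{P}_{\mu}(\mathbf{R}_E) = 0$ for each of $R \in \{O,I,L\}$. There is no real obstacle here; the only subtle point is remembering that the self-commutativity packaged in the $M$-\strong\ hypothesis (which is supplied by fim) is what makes the index shift legitimate, so the problem of ``eventually'' witnessing is not genuinely more general than ``on-the-nose'' witnessing in this setting.
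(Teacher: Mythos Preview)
Your argument is correct. It is essentially the same overall structure as the paper's---show $\mathbb{P}_{\mu}(\mathbf{R}_t^{\varphi})=0$ for each $t$ and each $\varphi$, then sum---but the paper phrases the key step as ``by a similar argument to Lemma~\ref{lem:01} and Proposition~\ref{prop:main}'' (i.e.\ re-running those proofs with the variables shifted by $t$), whereas you reduce directly to the already-proven $\mathbb{P}_{\mu}(\mathbf{R}^{\varphi})=0$ via the index-shifting identity in Fact~\ref{fact:easy}. Your route is cleaner in that it avoids repeating any argument.

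One small remark: you invoke the second half of Fact~\ref{fact:easy} (the $M$-\strong\ case) and make a point of needing self-commutativity, but this is unnecessary here. The shifted indices $t<t+1<\cdots<t+n-1$ are increasing, so the first half of Fact~\ref{fact:easy} (which only needs $M$-\weak) already gives $\mathbb{P}_{\mu}([R_n^{\varphi}(x_t,\ldots,x_{t+n-1})])=\mu^{(n)}(R_n^{\varphi}(x_1,\ldots,x_n))$. So the ``subtle point'' you highlight about self-commutativity is not actually what makes the shift legitimate; any $M$-\weak\ measure admits this reduction. The fim hypothesis is only needed to invoke Proposition~\ref{prop:main} at the end.
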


\begin{proof}
By a similar argument to Lemma \ref{lem:01} and Proposition \ref{prop:main}, we claim that for any $\varphi \in \mathcal{L}$, $\mathbb{P}_{\mu}(\mathbf{O}_{t}^{\varphi}) = 0$. Similarly 
\begin{align*}
    \mathbb{P}_{\mu}(\mathbf{I}_{E}) \leq  \mathbb{P}_{\mu}(\mathbf{O}_{E}) = \sum_{\varphi \in \mathcal{L}} \mathbb{P}_{\mu}(\mathbf{O}_{E}^{\varphi}) = \sum_{\varphi \in \mathcal{L}} \sum_{t \geq 1} \mathbb{P}_{\mu}(\mathbf{O}_{t}^{\varphi}) = \sum_{\varphi \in \mathcal{L}} \sum_{t \geq 1} 0=0.
\end{align*}
Again replacing $\mathbf{I}_{E}$ by $\mathbf{L}_{E}$ gives the other result. 
\end{proof}

Another interpretation might involve witnessing instability \emph{via a subsequence}. We can formalize this as follows: Given a formula $\varphi(x,y) \in \mathcal{L}_{xy}(\mathcal{U})$, we say that a \emph{subsequence of $p$ witnesses instability of $\varphi$} if there exists an injection $f:\mathbb{N} \to \mathbb{N}$ such that the pushforward $f_{*}(p)$ witnesses instability of $\varphi$. We recall that $\theta(x_1,\ldots,x_n) \in f_{*}(p)$ if and only if $\theta(x_{f(1)},\ldots,x_{f(n)}) \in p$. And so, the set $\{p \in S_{\mathbf{x}}(\mathcal{U}) : \text{ a subsequence of $p$ witnesses instability for $\varphi$} \}$ can be written as 
\begin{equation*}
      \mathbf{S}^{\varphi} \coloneqq \bigcup_{\substack{f: \mathbb{N} \to \mathbb{N} \\ \text{injective}}} f_{*}^{-1}(\mathbf{O}^{\varphi}). 
\end{equation*}
We first remark that, a priori, $\mathbf{S}^{\varphi}$ may not be $\mathbb{P}_{\mu}$-measurable. It is a union of uncountably many Borel sets. 
Secondly, it is easy to find examples of measures $\mu$ such that $\mu$ is \emph{fim} and $\mathbb{P}_{\mu}(\mathbf{S}^{\varphi}) = 1$. Reconsider Example \ref{example:main} and the measure $\mu_{L}$. We recall that this measure is \emph{fim}. Moreover, we know that with probability one, the induced structure on almost all random generic types is \emph{DLO} (Example \ref{example:DLO}). Let $N = (\mathbb{Q},<)$ and $\varphi(x,y) = x <y$. Then the following two facts hold: 
\begin{enumerate}
    \item $B_{N} \subseteq \mathbf{S}^{\varphi}$.   
    \item $\mathbb{P}_{\mu_{L}}(B_{N}) = 1$. 
\end{enumerate}
This shows us that $\mathbf{S}^{\varphi}$ does not behave like $\mathbf{O}^{\varphi}$ or $\mathbf{O}^{\varphi}_{E}$. 

\subsection{Some examples} Here, we have a quick interlude on some basic examples. One might have the impression that, like in the previous section, the main ingredient in the proof of Theorem \ref{theorem:witness} is secretly commutativity. This is not the case. We consider our old friends from the Rado graph. We recall that these measures are $M$-\strong\ and thus self-commute. 

\begin{example} Recall from Example \ref{example:main} the measures $\mu_t$ for $t \in (0,1)$. We remark that 
\begin{enumerate}
    \item $\mathbb{P}_{\mu_t}(B_{N}) = 1$ where $N$ is the countable model of the Rado graph (see Example \ref{example:random}). 
    \item $B_{N} \subseteq \mathbf{I}$ and so $\mathbb{P}_{\mu_{t}}(\mathbf{I}) =\mathbb{P}_{\mu_{t}}(\mathbf{O}) = 1$. 
    \item The theory of the Rado graph is simple and therefore no formula witnesses the strict order property. In particular, this implies that for any $\varphi \in \mathcal{L}$, $\mathbf{L}^{\varphi} = \emptyset$ and thus $\mathbb{P}_{\mu}(\mathbf{L}) = 0$. 
\end{enumerate}
\end{example}

Weaker versions of tameness are not enough to derive the conclusions of Theorem \ref{theorem:witness}. Our next example is a \emph{fam} measure which behaves poorly relative to our dividing line events\footnote{The notion of \emph{fam} is a weakening of \emph{fim}. We refer the curious reader to \cite[Definition 4.1]{CGH} for an explicit definition.}

\begin{example} Let $T$ be the theory of the random triangle free graph (the Henson graph) in the language $\mathcal{L} = \{R\}$. Let $M \models T$ and $M \prec \mathcal{U}$. Consider the type $p = \{\neg R(x,b): b \in \mathcal{U}\}$. We recall from \cite{CG} that the measure $\mu \coloneqq \delta_{p}$ is \emph{fam} over any model and $\mu$ is \emph{fam} over $M$. Since $\mu$ is a type, we remark that $\supp(\mathbb{P}_{\mu}) = \{p^{(\omega)}\}$. One can prove that $\neg R(x_i,x_j) \in p^{(\omega)}$ for any $i, j \geq 1$. We claim that one can thus shatter any realization of $p^{(\omega)}$ via the edge relation. Hence $\mathbb{P}_{\mu}(\mathbf{I}) = \mathbb{P}_{\mu}(\mathbf{O}) = 1$. Notice that $\mathbb{P}_{\mu}(\mathbf{L}) =0 $ since $T$ does not have the strict order property. 
\end{example}

We now construct an example where $\mathbb{P}_{\mu}(\mathbf{O}) = \mathbb{P}_{\mu}(\mathbf{L}) = 1$ while $\mathbb{P}_{\mu}(\mathbf{I}) = 0$. 

\begin{example}\label{Ex:NIP} Consider $M= (\mathbb{R},<)$ and let $M \prec \mathcal{U}$. Let $p$ be the unique type extending $\{x > a: a \in \mathcal{U}\}$ and consider the measure $\mu \coloneqq \delta_{p}$. The measure $\mu$ is definable over $M$, but it is not \emph{fim}. Notice that $\supp(\mathbb{P}_{\mu}) = \{p^{(\omega)}\} $ and for each $n \geq 1$, $(x_{n} < x_{n+1}) \in p^{(\omega)}$. Let $\varphi(x,y) \coloneqq x <y$. Notice that $\mathbb{P}_{\mu}(\mathbf{O}^{\varphi}) = \mathbb{P}_{\mu}(\mathbf{L}^{\varphi}) = 1$ and so $\mathbb{P}_{\mu}(\mathbf{O}) = \mathbb{P}_{\mu}(\mathbf{L}) = 1$.  Since our structure is NIP, we have that $\mathbb{P}_{\mu}(\mathbf{I}) = 0$. 
\end{example}

Our model-theoretic intuition tells us the answer to the following question should be false via Shelah's theorem. 

\begin{question} Does there exist a pair of structures $(\mathcal{U},M)$ and a measure $\mu \in \mathfrak{M}_{x}^{\inv}(\mathcal{U},M)$ such that the following properties hold:  
\begin{enumerate}
    \item $\mu$ is $M$-\weak. 
    \item $\mathbb{P}_{\mu}(\mathbf{O}) = 1$ while $\mathbb{P}_{\mu}(\mathbf{I}) = \mathbb{P}_{\mu}(\mathbf{L}) = 0$?
\end{enumerate}
\end{question}

\subsection{The NIP} In the NIP setting, there are many examples of measures such that  almost all random generic types witness instability. This phenomenon is quite abundant (for a concrete example, see Example \ref{Ex:NIP}). The catch is that these measures need to not be \emph{generically stable} and in the NIP context, this means that the measure does not self-commute. Hence, the construction of the measure $\mathbb{P}_{\mu}$ in these contexts is asymmetric. This asymmetry gives us the opportunity to ask, ``what are the different expected values of $O_{n}^{\varphi}(x_{\sigma(1)},\ldots,x_{\sigma(n)})$ where $\sigma$ is an element of the symmetric group on $\{1,\ldots,n\}$? Are these values dependent on one another in some way?'' 

It turns out that if $\varphi(x,y)$ is NIP, then the average value of the measure of $O_{n}^{\varphi}(x_{\sigma(1)},\ldots,x_{\sigma(n)})$ across all permutations must tend toward 0. Why? Let's think about the extreme case. If $\mathbb{P}_{\mu}(O_{n}^{\varphi}(\bar{x})) = 1$ for every permutation of variables, then one can quickly deduce that $\varphi(x,y)$ has VC dimension at least $n$. Using this intuition,  we prove that if $\mu$ is $M$-\weak\ and $\varphi(x,y)$ is NIP then 

\begin{equation*}
     \lim_{n \to \infty} \frac{1}{n!} \sum_{\sigma \in \Sym(n)} \mathbb{P}_{\mu}(O_{n}^{\varphi}(x_{\sigma(1)}, \ldots, x_{\sigma(n)})) = 0.
\end{equation*}

One interpretation of this phenomenon is that under NIP, while one might be able to find a large measure of witnesses to instability (e.g.\ Example \ref{Ex:NIP}), those witnesses are quite fragile under permutations.

To prove the main result of this section, we use some hard bounds from the theory around \emph{VC dimension of set of permutations} to prove a soft result. It would be interesting in its own right to compute some of the hard bounds in our setting. We first recall the definition of the VC-dimension of a set of permutations form \cite{cibulka2012tight}. 

\begin{definition} Let $\Sym(n)$ denote the permutation group on $\{1,\ldots,n\}$ and $A \subseteq \Sym(n)$. Then the \emph{VC-dimension of the set $A$} is defined to be the largest integer $k$ such that the set of restrictions of the permutations in $A$ to some $k$-tuple of positions is the set of all $k!$ permutation patterns. We let $r_{k}(n)$ be the maximum size of a set of $n$-permutations with VC-dimension $k$. 
\end{definition}

There is a clear connection between the VC dimension of a set of permutations and our setting. 

\begin{lemma}\label{lemma:VC} Fix a formula $\varphi(x,y)$ and a type $p \in S_{x_1,\ldots,x_m}(M)$. If 
\begin{equation*}
    A \coloneqq \{\sigma \in \Sym(m): O^{\varphi}_n(x_{\sigma(1)},\ldots,x_{\sigma(m)}) \in p\} > r_{d}(m),
\end{equation*}
Then the VC dimension of $\{\varphi(x,b): b \in \mathcal{U}^{y}\}$ is strictly greater than $d$. 
\end{lemma}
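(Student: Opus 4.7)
The plan is to extract a shattered $(d+1)$-subset for $\varphi(x,y)$ directly from the witnesses produced by the formulas $O^{\varphi}_m(x_{\sigma(1)},\ldots,x_{\sigma(m)})$ belonging to $p$, using the combinatorial richness of $A$ imposed by $|A| > r_d(m)$. First, I would fix a realization $\bar{a} = (a_1,\ldots,a_m) \models p$. For each $\sigma \in A$, since $O^{\varphi}_m(x_{\sigma(1)},\ldots,x_{\sigma(m)}) \in p$, there exist witnesses $b_1^{\sigma},\ldots,b_m^{\sigma} \in \mathcal{U}^y$ satisfying $\models \varphi(a_r, b_j^{\sigma})$ if and only if $\sigma^{-1}(r) \leq j$. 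Thus, for any tuple of positions $q_1 < \cdots < q_{d+1}$, each pair $(\sigma, j)$ produces a trace $\{k : \models \varphi(a_{q_k}, b_j^{\sigma})\} = \{k : \sigma^{-1}(q_k) \leq j\}$ on $\{a_{q_1},\ldots,a_{q_{d+1}}\}$.

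Next, since $|A^{-1}| = |A| > r_d(m)$, the set $A^{-1}$ has VC-dimension at least $d+1$, so one may choose positions $q_1 < \cdots < q_{d+1}$ such that the patterns of $(\sigma^{-1}(q_1),\ldots,\sigma^{-1}(q_{d+1}))$ for $\sigma \in A$ realize all $(d+1)!$ orderings. I then claim that $\{a_{q_1},\ldots,a_{q_{d+1}}\}$ is shattered by $\{\varphi(x,b) : b \in \mathcal{U}^y\}$. For any nonempty $S \subseteq \{1,\ldots,d+1\}$ of cardinality $l$, choose $\sigma \in A$ whose pattern places the indices in $S$ at the $l$ smallest ranks of $(\sigma^{-1}(q_1),\ldots,\sigma^{-1}(q_{d+1}))$; letting $j^{*}$ be the $l$-th smallest of these values, the witness $b_{j^{*}}^{\sigma}$ realizes exactly the trace $S$.

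The hardest part will be realizing the empty trace $S = \emptyset$, which requires some $b \in \mathcal{U}^y$ with $\models \neg\varphi(a_{q_k}, b)$ for every $k$. Among the witnesses constructed above, $b_1^{\sigma}$ realizes the empty trace precisely when $\sigma(1) \notin \{q_1,\ldots,q_{d+1}\}$, and it would suffice to produce some such $\sigma \in A$. If instead every $\sigma \in A$ satisfies $\sigma(1) \in \{q_1,\ldots,q_{d+1}\}$, then $|A| \leq (d+1)(m-1)!$, which when combined with $|A| > r_d(m)$ and the Cibulka--Kyn\v{c}l estimates for $r_d(m)$ either yields a direct contradiction in the relevant regime of $d$ and $m$, or permits a mild re-selection of the positions $q_1,\ldots,q_{d+1}$ via an averaging argument over $A$ to sidestep this obstruction. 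Once all $2^{d+1}$ traces on $\{a_{q_1},\ldots,a_{q_{d+1}}\}$ are realized, the VC dimension of $\{\varphi(x,b) : b \in \mathcal{U}^y\}$ strictly exceeds $d$, as required.
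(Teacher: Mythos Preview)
Your approach is essentially the paper's: from $|A|>r_d(m)$ pass to a tuple of positions on which all permutation patterns occur (you use $A^{-1}$, the paper uses $A$; since $|A|=|A^{-1}|$ these are equivalent), then read off traces on $\{a_{q_1},\ldots,a_{q_{d+1}}\}$ from the witnesses in $O_m^\varphi$. You are right that the empty trace $S=\emptyset$ is the sticking point, and in fact the paper's own proof glosses over exactly this case (its argument implicitly calls for a witness $y_{|K|}$ with $|K|=0$, which $O_m^\varphi$ does not supply). So you have identified a genuine gap shared by both arguments.

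Your proposed repair, however, does not close it. The bound $|A|\le (d+1)(m-1)!$ does not contradict $|A|>r_d(m)$: for fixed $d$ the Cibulka--Kyn\v{c}l estimates give $r_d(m)=2^{O(m\,\alpha(m)^c)}$, which is eventually much smaller than $(m-1)!$, so no contradiction arises for large $m$; and the ``averaging re-selection'' alternative is not an argument. Indeed the lemma as literally stated is false. Take $\varphi(x,y)\coloneqq (x\ne y)$ on any infinite set, $m=2$, $d=1$. For distinct $a_1,a_2$ both $O_2^\varphi(a_1,a_2)$ and $O_2^\varphi(a_2,a_1)$ hold (use $y_1=a_2$, respectively $y_1=a_1$, together with any $y_2\notin\{a_1,a_2\}$), so $|A|=2>1=r_1(2)$; yet the family $\{\mathcal U\setminus\{b\}:b\in\mathcal U^{y}\}$ has VC-dimension exactly $1$, since no pair admits the empty trace.

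The clean correction is to strengthen the hypothesis to $|A|>r_{d+1}(m)$. One then obtains $d+2$ positions $q_1<\cdots<q_{d+2}$ carrying all patterns; your argument realizes every \emph{nonempty} trace on $\{a_{q_1},\ldots,a_{q_{d+2}}\}$, and the singleton trace $\{a_{q_1}\}$ restricts to the empty trace on the $(d+1)$-set $\{a_{q_2},\ldots,a_{q_{d+2}}\}$, which is therefore shattered. This costs nothing in the downstream application, where one simply chooses $m$ with $m!/n_0>r_{d+1}(m)$ instead of $r_d(m)$.
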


\begin{proof} Since our permutation set is strictly larger than $r_{d}(m)$, we know that the VC dimension of the permutation set is strictly greater than $d$. Hence there exists $j_1 < \ldots < j_d$ in $\{1,\ldots,m\}$ such that $\{\sigma|_{j_1,\ldots,j_d}: \sigma \in A\} = \Sym(\{j_1,\ldots,j_d\})$. Now let $(a_1,\ldots,a_m) \models p$. We claim the set $\{a_{j_l}: 1 \leq l \leq d\}$ is shattered by $\{\varphi(x,b): b \in \mathcal{U}\}$. 
Indeed, if $K \subseteq \{j_1,\ldots,j_d\}$, let $\tau$ be an element of $\Sym(\{j_1,\ldots,j_d\})$ which places the elements of $K$ before the elements 
of $\{j_1,\ldots,j_d\} \backslash K$. Then there exists some $\tau' \in A$ such that $\tau'|_{j_1,\ldots,j_{d}} = \tau$. Thus, $\models O_{m}^{\varphi}(a_{\tau'(1)},\ldots,a_{\tau'(m)})$ implies
\begin{align*}
    \mathcal{U} &\models \exists y \left( \bigwedge_{i = 1}^{|K|} \varphi(a_{\tau'(i)},y) \wedge \bigwedge_{i = |K| + 1}^{m} \neg \varphi(a_{\tau'(i)},y) \right) \\ &\Longrightarrow
    \mathcal{U} \models \exists y \left( \bigwedge_{i \in K}  \varphi(a_{i},y) \wedge \bigwedge_{i \in \{j_1,\ldots,j_d\} \backslash K} \neg \varphi(a_{i},y) \right) 
\end{align*}
And so, for some $b \in \mathcal{U}$, we have that $\{a_{j_1},\ldots,a_{j_d}\} \cap \{\varphi(\mathcal{U},b)\} = \{a_i: i \in K\}$. 
\end{proof}

The next theorem is by Cibulka and Kyn\v{c}l \cite[Theorem 1.1]{cibulka2012tight}. 

\begin{fact}\label{fact:bound} For every $t \geq 1$, we have that $r_{2t + 2}(m) = 2^{\Theta(m\alpha(m)^{t})}$ and $r_{2t+3}(m) = 2^{\Theta(m\alpha(m)^{t} \log \alpha(m))}$ where $\alpha(m)$ is the inverse Ackermann function. 
\end{fact}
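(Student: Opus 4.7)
The plan is to reduce the question about permutation sets of bounded VC dimension to bounds on Davenport--Schinzel sequences, where matching $\Theta$-bounds involving the inverse Ackermann function are classical. Recall that a Davenport--Schinzel sequence of order $s$ on $m$ symbols is a sequence with no two adjacent symbols equal and no alternating subsequence $abab\ldots$ of length $s+2$; write $\lambda_s(m)$ for the maximum possible length. A well-developed body of work (Hart--Sharir, Agarwal--Sharir--Shor, Klazar, Nivasch, Pettie) establishes the asymptotics $\lambda_{2t+2}(m) = \Theta(m \alpha(m)^t)$ and $\lambda_{2t+3}(m) = \Theta(m \alpha(m)^t \log \alpha(m))$. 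If one proves the transfer statement $\log r_k(m) = \Theta(\lambda_{k-1}(m))$, then the theorem follows by direct substitution.

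For the upper bound, the approach I would pursue --- following Cibulka and Kyn\v{c}l --- is to associate, to each permutation $\sigma$ in a set $A \subseteq \Sym(m)$ of VC dimension less than $k$, a canonical merge-style trace against a fixed reference permutation. The forbidden $k!$ patterns on any $k$-tuple of positions force the alternation structure of this trace to be DS-like of order $k-1$, and hence its length is $O(\lambda_{k-1}(m))$. One then shows that $\sigma$ is encoded by $O(\lambda_{k-1}(m))$ bits along the trace, giving $|A| \leq 2^{O(\lambda_{k-1}(m))}$. For the lower bound, the plan is an explicit construction starting from an extremal DS sequence of order $k-1$, inflating each symbol occurrence into an independent binary label to produce exponentially many permutations, and then verifying by a pigeonhole argument on symbol occurrences that no $k$ positions realize all $k!$ patterns, so the resulting set has VC dimension at most $k-1$.

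The main obstacle, and the reason this is genuinely a \emph{deus ex machina} for our purposes, is that the $\Theta$-bounds on $\lambda_s$ themselves are deep: the lower-bound constructions of Nivasch and Pettie achieving $m \alpha(m)^t$ and $m \alpha(m)^t \log \alpha(m)$ rely on intricate recursive constructions on Ackermann-style hierarchies, while the matching upper bounds go through a subtle path-compression analysis. Since Fact \ref{fact:bound} is used only as a black box --- to extract a contradiction in the proof of the permutation-average theorem in the next subsection --- the cleanest route is to cite \cite{cibulka2012tight} together with the underlying DS-sequence literature, rather than to reconstruct these bounds here.
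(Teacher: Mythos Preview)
Your conclusion matches the paper exactly: this fact is not proved but simply quoted as \cite[Theorem~1.1]{cibulka2012tight} and used as a black box in Theorem~\ref{theorem:average}. As a side remark, the specific transfer identity $\log r_k(m) = \Theta(\lambda_{k-1}(m))$ together with the $\lambda_s$ asymptotics you quote does not line up numerically with the standard Davenport--Schinzel bounds (for instance $\lambda_4(m) = \Theta\bigl(m\,2^{\alpha(m)}\bigr)$ rather than $\Theta(m\alpha(m))$, so the indices in your reduction are off), but since both you and the paper ultimately treat the result as an external citation this does not affect the proposal.
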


A straightforward consequence of the above fact is the following.

\begin{fact}\label{fact:exist} For every pair of integers $(n,d)$ there exists an integer $m$ such that $m!/n > r_{d}(m)$. 
\end{fact}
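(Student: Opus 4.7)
The plan is to compare growth rates on a logarithmic scale. The quantity $m!/n$ is super-exponential in $m$, while Fact \ref{fact:bound} tells us that $r_{d}(m)$ grows like $2^{O(m\alpha(m)^{t}\log\alpha(m))}$ for an appropriate $t$ depending on $d$; since the inverse Ackermann function $\alpha(m)$ tends to infinity much more slowly than any iterated logarithm, the exponent in $r_{d}(m)$ is $o(m\log m)$, which is dominated by $\log(m!)$.

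More concretely, first I would fix $n$ and $d$, and choose $t$ large enough (say $t = \lceil d/2 \rceil$) so that Fact \ref{fact:bound} yields a constant $C = C(d)$ with
\begin{equation*}
    \log_{2} r_{d}(m) \leq C\, m\, \alpha(m)^{t}\log\alpha(m)
\end{equation*}
for all sufficiently large $m$ (the $\log\alpha(m)$ factor can simply be absorbed into the worse of the two bounds). Next, by Stirling's formula,
\begin{equation*}
    \log_{2}\!\left(\tfrac{m!}{n}\right) = m\log_{2} m - m\log_{2} e + \tfrac{1}{2}\log_{2}(2\pi m) - \log_{2} n = m\log_{2} m - O(m).
\end{equation*}
So it suffices to argue that $C\, m\, \alpha(m)^{t}\log\alpha(m) = o(m\log_{2} m)$, i.e.\ that $\alpha(m)^{t}\log\alpha(m) = o(\log m)$. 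This follows from the standard fact that $\alpha(m)$ grows slower than any finitely iterated logarithm of $m$; in particular $\alpha(m) \leq \log^{*}m$ eventually, so $\alpha(m)^{t}\log\alpha(m)$ is certainly $o(\log m)$.

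Combining these estimates, for all sufficiently large $m$,
\begin{equation*}
    \log_{2}\!\left(\tfrac{m!}{n}\right) = m\log_{2} m - O(m) > C\, m\, \alpha(m)^{t}\log\alpha(m) \geq \log_{2} r_{d}(m),
\end{equation*}
which gives $m!/n > r_{d}(m)$ as desired. There is no real obstacle here; the only mild point is making sure one picks the right $t$ from Fact \ref{fact:bound} to cover the given $d$, and recalling that $\alpha(m)$ is dominated by $\log m$ (and vastly more). The proof is essentially a one-line asymptotic comparison of $m\log m$ against $m\,\alpha(m)^{O(1)}\log\alpha(m)$.
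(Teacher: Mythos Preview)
Your argument is correct and is exactly the same approach as the paper's: the paper's proof is a one-sentence appeal to Fact~\ref{fact:bound} together with the observation that $m!$ outgrows $2^{\Theta(m\alpha(m)^{t})}$, and you have simply written out the asymptotic comparison in detail. The only minor point worth mentioning is that Fact~\ref{fact:bound} as stated covers $d \geq 4$, but since $r_{d}(m)$ is monotone in $d$ you may freely bound $r_{d}(m)$ by $r_{d'}(m)$ for some $d' \geq \max\{d,4\}$, which your phrase ``choose $t$ large enough'' already accommodates.
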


\begin{proof} This follows directly from Fact \ref{fact:bound}. Intuitively, this holds because $m!$ grows much faster than ``$2$ to the inverse Ackermann of $m$ to the $d$" for fixed $d$. Dividing by $n$ is more or less garnish. 
\end{proof}

Finally we come to the main theorem of this subsection. 

\begin{theorem}\label{theorem:average} Let $\mu \in \mathfrak{M}_{x}^{\inv}(\mathcal{U},M)$ and suppose that $\mu$ is $M$-\weak. Let $\varphi(x,y)$ be an NIP formula. Then 
\begin{equation*}
        \lim_{n \to \infty} \frac{1}{n!} \sum_{\sigma \in \Sym(n)} \mathbb{P}_{\mu}(O_{n}^{\varphi}(x_{\sigma(1)}, \ldots, x_{\sigma(n)})) = 0. 
\end{equation*}
\end{theorem}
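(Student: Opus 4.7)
The plan is to reduce the limit to a pointwise VC-dimension bound on each type and then invoke the combinatorial growth estimate of \cite{cibulka2012tight}. Write $a_n$ for the expression inside the limit. Since $\Sym(n)$ is finite, linearity of integration lets us swap the sum and the integral against $\mathbb{P}_\mu$:
\begin{equation*}
a_n \;=\; \frac{1}{n!}\sum_{\sigma \in \Sym(n)} \int_{S_{\mathbf{x}}(\mathcal{U})} \mathbf{1}_{[O_n^\varphi(x_{\sigma(1)},\ldots,x_{\sigma(n)})]}\, d\mathbb{P}_\mu \;=\; \int_{S_{\mathbf{x}}(\mathcal{U})} \frac{|A_p^n|}{n!}\, d\mathbb{P}_\mu(p),
\end{equation*}
where $A_p^n := \{\sigma \in \Sym(n) : O_n^\varphi(x_{\sigma(1)},\ldots,x_{\sigma(n)}) \in p\}$. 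So $a_n$ is just the $\mathbb{P}_\mu$-expected fraction of permutations of $\{1,\ldots,n\}$ that a random generic type actually realizes as $n$-instability witnesses.

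Next, since $\varphi(x,y)$ is NIP, the family $\{\varphi(x,b) : b \in \mathcal{U}^y\}$ has some finite VC dimension $d$. Because $O_n^\varphi$ is an $\mathcal{L}$-formula (no parameters), its membership in $p$ depends only on the restriction $p|_M$, so applying the contrapositive of Lemma \ref{lemma:VC} to $p|_M$ yields the uniform pointwise bound $|A_p^n| \leq r_d(n)$ for every $p \in S_{\mathbf{x}}(\mathcal{U})$. Substituting into the display above gives
\begin{equation*}
a_n \;\leq\; \frac{r_d(n)}{n!} \quad \text{for every } n \geq 1.
\end{equation*}

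To finish, one compares growth rates. By Fact \ref{fact:bound}, $r_d(n) = 2^{O(n\, \alpha(n)^{k} \log \alpha(n))}$ for some $k = k(d)$, while $n! = 2^{\Theta(n \log n)}$; since the inverse Ackermann $\alpha$ grows asymptotically much more slowly than $\log$, the ratio $r_d(n)/n!$ tends to $0$. (Equivalently, iterating Fact \ref{fact:exist} produces, for every $\varepsilon > 0$, an $N$ beyond which $r_d(n)/n! < \varepsilon$.) Combined with the previous inequality, this yields $a_n \to 0$, as required.

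The substantive work has been packaged into Lemma \ref{lemma:VC} and into the hard combinatorial bound of Cibulka--Kyn\v{c}l, so the main conceptual step is simply the Fubini-type rearrangement followed by recognizing that the VC dimension of a set of permutations is precisely the right combinatorial invariant to exploit here. The one subtlety worth underlining is that Lemma \ref{lemma:VC} is stated for types over the base model $M$, so one must first pass from $p \in S_{\mathbf{x}}(\mathcal{U})$ to its restriction $p|_M$; this is harmless because $O_n^\varphi$ is parameter-free, but it is what makes the pointwise VC bound on $|A_p^n|$ go through.
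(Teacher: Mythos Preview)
Your proof is correct and uses the same essential ingredients as the paper's own argument: the Fubini-type rearrangement writing $a_n$ as an integral of a permutation count, the contrapositive of Lemma \ref{lemma:VC} to bound that count via $r_d(n)$, and the Cibulka--Kyn\v{c}l growth estimate (Fact \ref{fact:bound}) to kill the ratio $r_d(n)/n!$.

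The packaging is slightly different, and arguably cleaner. The paper first checks that the sequence $(a_n)$ is decreasing, assumes the limit $c$ is positive, and then uses an averaging argument to locate a \emph{single} type $p_*\in\supp(\mu^{(m)})$ with $|A_{p_*}^m|>m!/n_0>r_d(m)$, contradicting Lemma \ref{lemma:VC}. You instead take the contrapositive of Lemma \ref{lemma:VC} as a \emph{uniform} pointwise bound $|A_p^n|\le r_d(n)$ valid for every type, and integrate it to get $a_n\le r_d(n)/n!$ directly. This bypasses both the monotonicity check and the contradiction, and in fact yields an explicit rate of convergence that the paper's argument does not state. One small quibble: your parenthetical about ``iterating Fact \ref{fact:exist}'' does not literally give \emph{eventually all} $n$ without further monotonicity, but since your main line goes through Fact \ref{fact:bound} directly this is harmless.
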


\begin{proof} Since $\varphi(x,y)$ is NIP, there exists a $d$ such that the VC-dimension of the set system $\{\varphi(x,b): b \in \mathcal{U}\}$ is less than or equal to $d$. We notice that if $n < n'$, then
\begin{equation*}
    \frac{1}{n!} \sum_{\sigma \in \Sym(n)} \mathbb{P}_{\mu}(O_{n}^{\varphi}(x_{\sigma(1)}, \ldots,x_{\sigma(n)})) \geq \frac{1}{n'!} \sum_{\sigma \in \Sym(n')} \mathbb{P}_{\mu}(O_{n'}^{\varphi}(x_{\sigma(1)}, \ldots x_{\sigma(n')})). 
\end{equation*}
Therefore, our terms are decreasing and bounded below, and the sequence converges to some number $c$. We argue that $c = 0$. If not, then $c > 0$. Choose $n_0 \in \mathbb{N}$ such that $0 < \frac{1}{n_0} < c$. By Fact \ref{fact:exist}, there exists a number $m$ such that $m!/n_0 > r_{d}(m)$. Now we have that 
\begin{align*}
    \frac{1}{n_0 } < c 
 &<\frac{1}{m!} \sum_{\sigma \in \Sym(m)} \mu^{(m)}(O_{m}^{\varphi}(x_{\sigma(1)},\ldots,x_{\sigma(m)})) \\ &= \int_{S_{\bar{x}}(M)} \frac{1}{m!} \sum_{\sigma \in \Sym(m)} \mathbf{1}_{O_{m}^{\varphi}(x_{\sigma(1)},\ldots,x_{\sigma(m)})} d\mu^{(m)} \\
 &\Longrightarrow \frac{m!}{n_0} <  \int_{S_{\bar{x}}(M)} \sum_{\sigma \in \Sym(m)}\mathbf{1}_{O_{m}^{\varphi}(x_{\sigma(1)},\ldots,x_{\sigma(m)})} d\mu^{(m)}\\
 &\Longrightarrow \exists p_* \in \supp(\mu^{(m)}) \text{ such that } \sum_{\sigma \in \Sym(m)}\mathbf{1}_{O_{m}^{\varphi}(x_{\sigma(1)},\ldots,x_{\sigma(m)})}(p_*)  > \frac{m!}{n_0} \\
 &\Longrightarrow |\{\sigma \in \Sym(m): O_{m}^{\varphi}(x_{\sigma(1)},\ldots,x_{\sigma(m)}) \in p_{*}\}| > \frac{m!}{n}.
\end{align*}
And so by our choice of $m$, 
\begin{equation*}
    |\{\sigma \in \Sym(m): O_{m}^{\varphi}(x_{\sigma(1)},\ldots,x_{\sigma(m)}) \in p_{*}\}| > r_{d}(m).
\end{equation*}
By Lemma \ref{lemma:VC}, we have that the VC dimension of $\{\varphi(x,b):b \in \mathcal{U}\}$ is strictly greater than $d$, which is a contradiction. 
\end{proof}

\begin{remark} The previous result actually gives us another proof of Theorem \ref{theorem:witness} for generically stable measures in the NIP setting. Suppose $T$ is NIP, $\mu \in \mathfrak{M}_{x}^{\inv}(\mathcal{U},M)$ is generically stable over $M$. Then $\mathbb{P}_{\mu}(\mathbf{O}) = 0$. Notice that for any $\mathcal{L}$-formula $\varphi(x,y)$, 
\begin{align*}
    \mathbb{P}_{\mu}(\mathbf{O}^{\varphi}) &= \lim_{n \to \infty} \mu^{(n)}(O_{n}^{\varphi}(x_1,\ldots,x_n)) \\
    &=\lim_{n \to \infty} \frac{1}{n!}\sum_{\sigma \in \Sym(n)} \mu^{(n)}(O_{n}^{\varphi}(x_1,\ldots,x_n)) \\ 
    &\overset{(*)}{=}\lim_{n \to \infty} \frac{1}{n!}\sum_{\sigma \in \Sym(n)} \mu^{(n)}(O_{n}^{\varphi}(x_{\sigma(1)},\ldots,x_{\sigma(n)})) \\ 
    &\overset{(**)}{=}0
\end{align*}
Equation $(*)$ follows from the fact that $\mu$ self-commutes. Equation $(**)$ is Theorem \ref{theorem:average}. Hence $\mathbb{P}_{\mu}(\mathbf{O}) = 0$ since $\mathbf{O}$ is the countable union of sets of measures $0$. 
\end{remark}

We consider an example where the limit is equal to 1.

\begin{remark} Notice if we consider $\mu_t$ from Example \ref{example:main} and set $\varphi(x,y) \coloneqq R(x,y)$, the edge relation, we see that 
\begin{align*}
        \lim_{n \to \infty} \frac{1}{n!} \sum_{\sigma \in \Sym(n)} \mathbb{P}_{\mu_{t}}(O_{n}^{\varphi}(x_{\sigma(1)}, \ldots, x_{\sigma(n)})) = \lim_{n \to \infty} \frac{1}{n!} \sum_{\sigma \in \Sym(n)} 1 = 1,
\end{align*}
because $\mu$ is $M$-\strong\ (i.e.\ self-commutes) and $\mathbb{P}_{\mu_{t}}(O_{n}^{R}(x_1,\ldots,x_n)) = 1$. 
\end{remark}

\bibliographystyle{plain}
\bibliography{refs}

\end{document}